\newtheorem{theorem}{Theorem}[section]
\newtheorem{thm}[theorem]{Theorem}
\newtheorem{fact}[theorem]{Fact}
\newtheorem{proposition}[theorem]{Proposition}
\newtheorem{prop}[theorem]{Proposition}
\newtheorem{claim}[theorem]{Claim}
\newtheorem{conjecture}[theorem]{Conjecture}
\newtheorem{lemma}[theorem]{Lemma}		
\newtheorem{corollary}[theorem]{Corollary}
\newtheorem{cor}[theorem]{Corollary}
\newtheorem{question}[theorem]{Question}
\theoremstyle{definition}
\newtheorem{definition}[theorem]{Definition}
\newtheorem{remark}[theorem]{Remark}
\newcommand{\NN}{{\mathbb{N}}}
\newcommand{\QQ}{{\mathbb{Q}}}
\newcommand{\ZZ}{{\mathbb{Z}}}
\newcommand{\sub}{\subseteq}
\newcommand{\sN}[1]{_{#1\in \NN}}
\newcommand{\uhr}[1]{\! \upharpoonright_{#1}}
\newcommand{\SI}[1]{\Sigma^0_{#1}}
\newcommand{\PI}[1]{\Pi^0_{#1}}
\newcommand{\bi}{\begin{itemize}}
\newcommand{\ei}{\end{itemize}}
\newcommand{\bc}{\begin{center}}
\newcommand{\ec}{\end{center}}
\newcommand{\tp}[1]{2^{#1}}
\newcommand{\ex}{\exists}
\newcommand{\fa}{\forall}
\newcommand{\la}{\langle}
\newcommand{\ra}{\rangle}
\newcommand{\n}{\noindent}
\newcommand{\vsp}{\vspace{6pt}}
\newcommand{\sss}{\sigma}
\newcommand{\w}{\omega}
\newcommand \seq[1]{{\left\langle{#1}\right\rangle}}
\newcommand\+[1]{\mathcal{#1}}
\newcommand{\ol}{\overline}
\newcommand{\lra}{\leftrightarrow}
\newcommand{\LR}{\Leftrightarrow}
\newcommand{\DA}{\downarrow}
\newcommand{\dom}{\ensuremath{\mathrm{dom}}}
\newcommand{\dset}[2]{\{#1 : #2 \}}
\DeclareMathOperator{\gra}{graph}
\newenvironment{claimproof}{\begin{proof}}{\end{proof}}
\newcommand{\frb}{\mathfrak{b}}
\numberwithin{equation}{section}
\renewcommand{\hat}{\widehat}
\begin{document}

\title{Logic Blog 2017}

 \author{Editor: Andr\'e Nies}

\maketitle


 {
The Logic   Blog is a shared platform for
\bi \item rapidly announcing  results and questions related to logic
\item putting up results and their proofs for further research
\item parking results for later use
\item getting feedback before submission to  a journal
\item foster collaboration.   \ei

Each year's   blog is    posted on arXiv shortly after the year has ended.
\vsp
\begin{tabbing} 

 \href{http://arxiv.org/abs/1703.01573}{Logic Blog 2016} \ \ \ \   \= (Link: \texttt{http://arxiv.org/abs/1703.01573})  \\
 
  \href{http://arxiv.org/abs/1602.04432}{Logic Blog 2015} \ \ \ \   \= (Link: \texttt{http://arxiv.org/abs/1602.04432})  \\
  
  \href{http://arxiv.org/abs/1504.08163}{Logic Blog 2014} \ \ \ \   \= (Link: \texttt{http://arxiv.org/abs/1504.08163})  \\

   \href{http://arxiv.org/abs/1403.5719}{Logic Blog 2013} \ \ \ \   \= (Link: \texttt{http://arxiv.org/abs/1403.5719})  \\

    \href{http://arxiv.org/abs/1302.3686}{Logic Blog 2012}  \> (Link: \texttt{http://arxiv.org/abs/1302.3686})   \\

 \href{http://arxiv.org/abs/1403.5721}{Logic Blog 2011}   \> (Link: \texttt{http://arxiv.org/abs/1403.5721})   \\

 \href{http://dx.doi.org/2292/9821}{Logic Blog 2010}   \> (Link: \texttt{http://dx.doi.org/2292/9821})  
     \end{tabbing}

\vsp

\n {\bf How does the Logic Blog work?}

\vsp

\n {\bf Writing and editing.}  The source files are in a shared dropbox.
 Ask Andr\'e (\email{andre@cs.auckland.ac.nz})  in order    to gain access.

\vsp

\n {\bf Citing.}  Postings can be cited.  An example of a citation is:

\vsp

\n  H.\ Towsner, \emph{Computability of Ergodic Convergence}. In  Andr\'e Nies (editor),  Logic Blog, 2012, Part 1, Section 1, available at
\url{http://arxiv.org/abs/1302.3686}.}

\vsp 

%
%
%
 
The logic blog,  once it is on  arXiv,  produces citations on Google Scholar.
%
\newpage
\tableofcontents

\part{Computability theory}
   
       \newtheorem{oq}{Open Question}

\section{Open questions from Capulalpan retreat, December 2016}

Eight researchers met on a four-day retreat in Mexico. Here is a collection of open questions that were discussed.
\subsection{Jason Rute and Rutger Kuyper}
\begin{thm}[Miller and Kuyper]
$A \in 2^\omega$ is K-trivial iff 

$\forall \in 2^\omega\ [X \text{ is MLR } \rightarrow X \triangle A \text{ is MLR}]$.
\end{thm}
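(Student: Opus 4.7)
\emph{Forward direction.} The plan is to invoke the Hirschfeldt--Nies--Stephan theorem that a K-trivial $A$ is low for Martin-L\"of randomness, i.e.\ $\MLR = \MLR^A$ as classes in $2^\omega$. If $X$ is MLR then it is also $A$-MLR; the map $T_A \colon Z \mapsto Z \triangle A$ is an $A$-computable measure-preserving involution of $2^\omega$ and therefore preserves $A$-MLR. Hence $X \triangle A$ is $A$-MLR, and a fortiori MLR.

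\emph{Converse (plan).} I would argue by contrapositive: assume $A$ is not K-trivial and produce an MLR $X$ with $X \triangle A \notin \MLR$. A first observation is that the hypothesis is symmetric (apply it to $X \triangle A$ and use $T_A^2 = \id$), so it reads `$X$ is MLR iff $X \triangle A$ is MLR'. Pushing the universal ML test $(U_n)$ forward through $T_A$ yields one canonical $A$-ML test $(U_n \triangle A)$, of the same $\mu$-measure, whose intersection is \emph{exactly} the non-MLR class. The task then is to upgrade this single distinguished $A$-ML test to the statement that \emph{every} $A$-ML test captures only non-MLR sets, which is lowness for MLR and hence (by Hirschfeldt--Nies--Stephan) K-triviality of $A$. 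My plan for the upgrade: use Ku\v cera's coding theorem to pick an MLR $X_0$ with $A \leq_T X_0$; by the Hirschfeldt--Nies base-for-randomness characterisation of K-trivials, $X_0$ cannot be $A$-MLR (otherwise $A$ would be a base for MLR, hence K-trivial), so it is captured by some $A$-ML test $(V_n^A)$. I would then exploit the specific Ku\v cera encoding of $A$ inside $X_0$ to convert $(V_n^A)$ into an unrelativised ML test capturing $X_0 \triangle A$, contradicting the hypothesis.

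\emph{Main obstacle.} The crux is precisely this last conversion from an $A$-test capturing $X_0$ to a $\emptyset$-test capturing $X_0 \triangle A$. The naive complexity bound $K((X \triangle A)\uhr n) \leq K(X \uhr n) + K(A \uhr n) + O(1)$ is too coarse, since it already requires $A$ to be K-trivial in order to conclude MLR of $X \triangle A$; so a finer argument is needed. I would expect the cost-function or `decanter' machinery of Nies and collaborators to be essential here, since that machinery is designed exactly to convert a failure of lowness for randomness into an effective K-triviality witness. Alternatively, since $T_A$ is a measure-preserving involution, one can attempt a more measure-theoretic attack: the hypothesis is an effective-ergodic statement saying that the $T_A$-action respects the MLR class, and one can ask how close this is to saying $T_A$ is effectively computable modulo a null set, which is the heart of K-triviality.
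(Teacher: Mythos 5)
The paper merely states this result (attributing it to Miller and Kuyper) and offers no proof, so there is no internal argument to compare against; the proposal must stand on its own.

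Your forward direction is correct and is the standard argument: $K$-triviality is equivalent to lowness for $\mathrm{MLR}$ (Nies; cf.\ Hirschfeldt--Nies--Stephan), so an $\mathrm{MLR}$ set $X$ is $A$-$\mathrm{MLR}$; and $T_A\colon Z\mapsto Z\triangle A$ is an $A$-computable measure-preserving involution, hence preserves $A$-$\mathrm{MLR}$, so $X\triangle A$ is $A$-$\mathrm{MLR}$ and \emph{a fortiori} $\mathrm{MLR}$.

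For the converse you lay out a plan and then explicitly leave the decisive step open, and that gap is genuine. After choosing an $\mathrm{MLR}$ set $X_0$ with $A\le_T X_0$ by Ku\v{c}era coding, the base-for-randomness theorem (Hirschfeldt--Nies--Stephan) gives an $A$-ML test $(V_n^A)$ capturing $X_0$. But pushing $(V_n^A)$ forward through $T_A$ yields only another \emph{$A$-}ML test, now capturing $X_0\triangle A$; since $T_A$ is an $A$-computable measure-preserving involution, this merely restates that $X_0$ is not $A$-$\mathrm{MLR}$ and says nothing about failure of an oracle-free test. The whole difficulty is to strip off the oracle, and nothing in the proposal does so. That step cannot be purely formal: one must use some quantitative consequence of $A$ failing to be $K$-trivial (a violated cost-function condition, a surfeit of short $A$-descriptions exploited by the Ku\v{c}era coding, or similar) to assemble a genuine $\emptyset$-ML test capturing $X_0\triangle A$; and it is not even clear that the Ku\v{c}era real $X_0$ is the right witness rather than one built expressly for this purpose. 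As written, only the easy half of the equivalence is established.
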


Kuyper and Rute think they can generalize this theorem to compact groups---$\triangle$ is replaced with a group operation, MLR is for the Haar measure, and K-trivial is as in the Melnikov and Nies paper \cite{Melnikov.Nies:12}.  However, the following similar questions remain open.

In this next question, $X / A$ means the bits of $X$ selected using $A$.  A significant difference here is that selection is not invertable and therefore not a group operation or even a group action.
\begin{oq}
Is the following true?  A  set $A \in 2^\omega$ is K-trivial iff $\forall  X  \in 2^\omega\ [X \text{ is MLR } \rightarrow X /  A \text{ is MLR}]$.
\end{oq}

In this next question, $E(2)$ is the (non-Abelian) group of orientation preserving transformations of $\mathbb{R}^2$.  $E(2)$ is entirely composed of transformations $T$ where $T$ is a rotation followed by a translation, so $E(2)$ can be thought of as the space $\mathbb{T}\times\mathbb{R}^2$ (where $\mathbb{T}$ is the circle) with the appropriate group action.  (For a similar question one could consider the compact group $SO(2)$, the orientation preserving transformations of the sphere.)  $E(2)$ acts on $\mathbb{R}^2$, and the Lebesgue measure is the unique locally finite invariant Borel measure (up to scaling).
\begin{oq}
Is the following true?  A transformation $A \in E(2)$ is K-trivial iff $\forall (x,y) \in 2^\omega\ [(x,y) \text{ is MLR } \rightarrow A(x,y) \text{ is MLR}]$.
\end{oq}

Since this group is a merely \emph{acting on} $\mathbb{R}^2$ it doesn't seem to instantly follow from Kuyper and Rute's result mentioned above.  In particular, $E(2)$ has an extra dimension, so there are continuum-many ways to send $(x_1,y_1)$ to $(x_2,y_2)$.  This may or may not change the answer.

There are many more questions of this type.  Here is one more.
\begin{oq}
Is the following true?  A real $r \in (0,\infty)$ is K-trivial iff for all MLR Brownian motions $B$, we have that $B(r)$ is MLR (for the Lebesgue measure).
\end{oq}

\subsection{Brown Westrick}
Let $\mathcal P \subseteq 2^{\mathbb Z}$ be a subshift. Let $D (\mathcal P)$ be the set of effective Hausdorff dimensions of members of $\mathcal P$. By Simpson paper, $D (\mathcal P) \subseteq [0,h(\mathcal P)]$ where $h$ denotes the topological entropy of $\mathcal P$ (inf over $n$ of log of number of $n$-patterns occurring divided by $n$). If $\mathcal P$ is of finite type then we have equality. 

Say that $x $ is $d,b$-shift-complex if $K(\sigma) \ge d |\sigma| -b$ for each pattern $\sigma$ that occurs in $x$ (see BSL 2013 survey by Khan). Say that  $\mathcal  P$ is $d,b$-shift complex  if each $x \in \mathcal P$ is $d,b$-shift complex. In this case   $d$ is a lower bound for $D(\mathcal P)$.  
\begin{oq} Is $D(\mathcal P)$ necessarily closed? \end{oq} 
\begin{oq} Is some  $x \in \mathcal P$ necessarily $h(\mathcal P), O(1)$ shift complex? \end{oq}
\subsection{Denis Hirschfeldt}
For definitions see below.
\begin{oq}
Are all $1$-random sets quasiminimal in the uniform dense degrees?
\end{oq}

\begin{oq}
Are there minimal pairs in the [uniform or nonuniform]  [generic or
effective dense] degrees?
\end{oq}
Related results on the four other reducibilities below by Igusa; Hirschfeldt, Jockusch, Kuyper and Schupp; Cholak and Igusa (contains some work joint with Hirschfeldt); Astor, Hirschfeldt and Jockusch (in preparation).

\begin{definition}
Let $g : \omega \to \omega$. A {\bf partial description} of $g$ is a
partial function $f : \omega \to \omega$ such that $f(n)=g(n)$
whenever $f(n)$ is defined. A {\bf generic description} of $g$ is a
partial description of $g$ with domain of density $1$.

A {\bf dense description} of a function $g : \omega \to \omega$ is
a partial function $f : \omega \to \omega$ such that
$f(n)\DA=g(n)$ on a set of density $1$.

For a function $f : \omega \to \omega \cup \{\square\}$, the
{\bf strong domain} of $f$ is $f^{-1}(\omega)$.  Let $g : \omega \to
\omega$. A {\bf strong partial description} of $g$ is a (total)
function $f : \omega \to \omega \cup \{\square\}$ such that
$f(n)=g(n)$ on the strong domain of $f$. An {\bf effective dense
description} of $g$ is a strong partial description of $f$ with
strong domain of density $1$.
\begin{itemize} \item
We say that $h$ is \emph{nonuniformly generically reducible} to $g$,
and write $h \leq_{ng} g$, if for every generic description $f$ of
$g$, there is an enumeration operator $W$ such that $W^{\gra(f)}$
enumerates the graph of a generic description of $h$.

\item We say that $h$ is \emph{uniformly generically reducible} to $g$, and
write $h \leq_{ug} g$, if there is an enumeration operator $W$ such
that if $f$ is a generic description of $g$, then $W^{\gra(f)}$ is a
generic description of $h$.

\item We say that $h$ is \emph{nonuniformly densely reducible} to $g$, and
write $h \leq_{nd} g$, if for every dense description $f$ of $g$,
there is an enumeration operator $W$ such that $W^{\gra(f)}$
enumerates the graph of a dense description of $h$.

\item We say that $h$ is \emph{uniformly densely reducible} to $g$, and
write $h \leq_{ud} g$, if there is an enumeration operator $W$ such
that if $f$ is a dense description of $g$, then $W^{\gra(f)}$ is a
dense description of $h$.

\item We say that $h$ is \emph{nonuniformly effectively densely reducible}
to $g$, and write $h \leq_{ned} g$, if every effective dense
description of $g$ computes an effective dense description of
$h$.

\item We say that $h$ is \emph{uniformly effectively densely reducible} to
$g$, and write $h \leq_{ued} g$, if there is a Turing functional
$\Phi$ such that if $f$ is an effective dense description of $g$, then
$\Phi^f$ is an effective dense description of $h$.
\end{itemize}
Let $$\mathcal R(A)=\{2^nk : n \in A \, \wedge \, k \textrm{ odd}\}.$$
Let $J_n=[2^n,2^{n+1})$ and let $$\widetilde{\mathcal R}(A) =
\bigcup_{n \in A} J_n.$$ Let $\mathcal E(A) = \widetilde{\mathcal
R}(\mathcal R(A))$. This operator induces embeddings of the Turing
degrees into all of the degree structures arising from the above
reducibilities. In any of these structures, a degree is
\emph{quasiminimal} if it is not above any nontrivial degree in the
image of the embedding induced by $\mathcal E$.
\end{definition}

\subsection{Andre Nies}

For $K$-trivial sets $A,B$ we say that $A \le_{ML} B $ if every ML-random oracle $Z$ computing $B$ computes $A$.

\begin{oq} Is $\le_{ML}$ arithmetical? \end{oq}

Note that by Gandy basis theorem, if $A \not \le_{ML} B$ then there is a witness  $Z \le_T \mathcal O$.

A $K$-trivial set $A$  is called smart if every ML-random $Z \ge_T A$ computes all the $K$-trivials.  Equivalently, $A$ is ML-complete for the $K$-trivials. It is not even clear whether smartness is arithmetical. See Section 3 on the Logic Blog 2016 for detail. 
\begin{oq}  Can a smart $K$-trivial be cappable? Is there a Turing minimal pair of smart $K$-trivials? \end{oq}

\begin{oq} Suppose $A$ is $K$-trivial. Is $A$ Turing below each LR-hard ML-random? \end{oq}

\begin{oq} Is weak 2-randomness closed upward under $\le_K$? (Miller and Yu). \end{oq}

\begin{oq} Is weak 2-randomness closed downward within the 1-randoms under $\le_{LR}$?
\end{oq}

One could also try to show that $e+ \pi \not \in \QQ$. Or that $e \pi \not \in \QQ$. Good news: at least one of them holds.  	
        \section{Khan, Nies: SNR functions versus DNR functions}

We study three closely related mass problems, and also their variants where a computable growth  bound is imposed on the functions. 
\newcommand{\SNR}{\mathsf{SNR}}
\newcommand{\SNPR}{\mathsf{SNPR}}
\newcommand{\DNR}{\mathsf{DNR}}

		\begin{definition}
 \ 		\bi \item[(i)] 	A function $f: \omega \rightarrow \omega$ is \emph{strongly nonrecursive} (or \emph{$\SNR$}) if for every recursive function $g$, for all but finitely many $n \in \omega$, $f(n) \neq g(n)$.
			
			 \item[(ii)] A function $f$  is \emph{strongly non-partial-recursive} (or \emph{$\SNPR$}) if for every partial recursive function $\psi$, for all but finitely many $n$, if $\psi(n)$ is defined, then $f(n) \neq \psi(n)$.
			 
			\item[(iii)] 		 A function $f$  is \emph{diagonally  non recursive} (or \emph{$\DNR$}) if     $f(n) \neq J(n)$ whenever $J(n)$ is defined. Here $J$ is a fixed universal p.r.\  function, e.g.\ $J(n) \simeq \phi_n(n)$, though below we will  use a different one. \ei
		\end{definition}

Trivally $\SNPR$ implies $\SNR$. Also, if $f$ is $\SNPR$ then a finite variant of $f$ is $\DNR$. $\SNR$ has an analog in cardinal characteristics called $\frb(\neq^*)$ (\cite[Section 6]{LogicBlog:16}). Anything in computability involving enumeration/partiality fails to have such an analog.

We will show that every non-high $\SNR$ function is $\SNPR$ and hence computes a $\DNR$. Also,   every $\DNR$ function    computes an $\SNPR$ function. We can also keep track of bounds  on the functions that are  order functions (OF) as defined below. 
   Theorems 3.8 and 3.10 in \cite{Khan.Miller:17} yield a downward and an upward growth hierarchy within $\DNR$: for every OF $g$, there is a (much faster growing) OF $h$ such that there is an $h$-bounded $\DNR$ function that computes no $g$-bounded $\DNR$ function. A similar result holds with $g$ and $h$ interchanged. Our  translation  between  $\SNR$ and $\DNR$ can be used to obtain similar hierarchy results for $\SNR$.

If $A$ is high then it computes a function $f$ dominating all computable functions, which is in particular $\SNR$. On the other hand not each high set $A$ computes a $\DNR$ function (e.g., a high incomplete r.e.\ set $A$ doesn't). We discuss that outside the high degrees, the degree classes of such functions are the same. We also check how potential   computable bounds  on the functions change when going from one class to the other. The facts  suggests that $\SNR$ for the same bound  is stronger. However, going from $\DNR$ to $\SNR$ the loss is still within the elementary.

 By the following, highness  is the only reason an $\SNR$ function  can fail to compute a $\DNR$ function. The result is due to  Kjos-Hansen, Merkle, and Stephan \cite[Thm.\ 5.1 $(1) \to (2)$]{Kjos.Merkle:11}
\begin{prop}
			Every non-high $\SNR$ function is $\SNPR$ and hence computes a $\DNR$.
		\end{prop}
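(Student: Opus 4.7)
The plan is to prove the contrapositive: if $f$ is $\SNR$ but fails to be $\SNPR$, then $f$ is of high Turing degree. The second clause (``hence computes a $\DNR$'') is immediate because any $\SNPR$ function agrees with a fixed universal partial recursive $J$ on at most finitely many inputs, so a finite modification becomes $\DNR$.

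To begin, I would fix a witness for non-$\SNPR$: a partial recursive $\psi$ such that the set $S = \{n : \psi(n)\downarrow = f(n)\}$ is infinite. Let $m_n$ denote the least $m \ge n$ with $m \in S$, and define $g(n)$ to be the stage at which $\psi(m_n)$ first converges. Because $f$ allows one to recognize an element of $S$ (by checking whether $\psi_s(m)\downarrow = f(m)$ for growing $s$ and $m \ge n$), we have $g \leq_T f$.

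The crux is to show that $g$ dominates every computable function, whence $f$ is high, contradicting the hypothesis. Given a computable $k$, which without loss of generality is nondecreasing, consider the total computable function
\[
\tilde\psi(n) = \begin{cases} \psi(n) & \text{if } \psi_{k(n)}(n)\downarrow, \\ 0 & \text{otherwise.} \end{cases}
\]
By $\SNR$, $f(n) \neq \tilde\psi(n)$ for all sufficiently large $n$; for such $n$ lying in $S$ we have $f(n) = \psi(n)$, which forces $\psi(n)$ to need more than $k(n)$ stages to converge (otherwise $\tilde\psi(n) = \psi(n) = f(n)$). Since $m_n \ge n$ and $m_n \to \infty$, monotonicity of $k$ then yields $g(n) > k(m_n) \ge k(n)$ for almost all $n$. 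The conceptual obstacle is precisely the design of $\tilde\psi$: $\SNR$ only asserts that $f$ avoids every total computable function, and the art is to package this avoidance so that it forces a lower bound on the runtime of $\psi$ exactly along the set $S$ where $\psi$ meets $f$, thereby transmuting ``avoidance'' into ``domination'' and finally into highness.
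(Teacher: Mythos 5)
Your overall plan (contrapositive via Martin's domination characterization of highness) is sound and differs genuinely from the paper's proof, but there is a real gap at the step where you assert $g \leq_T f$. The set $S = \{n : \psi(n)\mathord\downarrow = f(n)\}$ is only c.e.\ relative to $f$, not $f$-decidable: to certify that some $m' \in [n, m)$ is \emph{not} in $S$ you must rule out that $\psi(m')$ ever converges to $f(m')$, which is a $\Pi^0_1$ condition that $f$ cannot in general settle. Consequently $m_n$, the \emph{least} element of $S$ at or above $n$, need not be computable from $f$, and so neither is your $g$. Since the whole point is to get a dominating function that is $\leq_T f$ (not merely $\leq_T f'$) so that Martin's theorem applies, this must be repaired.

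The repair is small and preserves your argument: redefine $g(n)$ to be the least stage $s$ such that there exists $m$ with $n \le m \le s$ and $\psi_s(m)\mathord\downarrow = f(m)$. This $g$ is total (because $S$ is infinite) and clearly $\leq_T f$. Your domination argument then goes through verbatim with ``the witnessing $m$'' in place of $m_n$: for $n$ past the finite exception set of $\mathsf{SNR}$ applied to $\tilde\psi$, the coincidence at $m \ge n$ forces $\psi_{k(m)}(m)\mathord\uparrow$, and since $\psi_s(m)\mathord\downarrow$ we get $g(n) = s > k(m) \ge k(n)$ by monotonicity of $k$.

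With this fix, your route is correct and is a different decomposition from the paper's. The paper argues in the other contrapositive direction: assuming $f$ is not high, it takes a computable $h$ that infinitely often escapes an $f$-computable ``$2n$-th coincidence stage'' function, and from $h$ and $\psi$ it explicitly builds a total computable $j$ that is infinitely often equal to $f$, contradicting $\mathsf{SNR}$. Both proofs pivot on an $f$-computable coincidence-stage function, but the paper's argument is more constructive (it produces the offending computable near-copy of $f$ directly), whereas yours transmutes $\mathsf{SNR}$-avoidance into domination by engineering $\tilde\psi$ and then appeals to Martin's theorem for highness. Yours is arguably slicker; the paper's is more self-contained.
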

		\begin{proof}
			Supose that $f:\omega \rightarrow \omega$ is not high, and that $\psi$ is a partial recursive function that is infinitely often equal to it. For each $n \in \omega$, let $g(n)$ be the least stage such that $|\dset{x \in \omega}{\psi(x)[g(n)]\DA  = f(x)}| \ge 2n $. Then $g$ is recursive in $f$.

			Since $f$ is not high, there is a recursive function $h$ that escapes $g$ infinitely often. We define a recursive function $j$ that is infinitely often equal to $f$. Let $j_0 = \emptyset$. Given $j_n$, let \[A = \dset{\langle x, \psi(x)\rangle}{x \notin \dom(j_n), \psi(x)[h(n)] \DA}.\]

			Let $y$ be the least such that it is not in the domain of $j_n \cup A$. Finally, let $j_{n+1} = j_n \cup A \cup \{\langle y, 0 \rangle\}$. 

			Clearly  $j = \bigcup_n j_n$ is recursive. To see that $j(x) = f(x)$ for   infinitely many $x$, take $n$ such that $h(n) > g(n)$. Then there are $2n$ many $x$ such that  we have a coincidence $f(x) = \psi(x)[h(n)]$. We have lost at most $n$ coincidences by defining $j_{k+1}(y)=0$ at stages $k<n$. Thus $j_{n+1}(x) = f(x)$ for at least $n$ many $x$.
		\end{proof}

	\begin{definition}
			An \emph{order function} is a recursive, nondecreasing, and unbounded function $p: \omega \rightarrow \omega$ such that $p(0) \ge 2$.  \end{definition}
			\begin{definition}
			For a class $\mathsf{C}$ of functions from $\omega$ to $\omega$ and an order function $p$, let $\mathsf C_p$ denote the subclass consisting of those functions $f$ such that $f(n) < p(n)$ for each $n$.
		\end{definition}

In the following we define the universal p.r.\ functional by $$J(2^e (2x+1)) \simeq \phi_e(x).$$ 		
	\begin{prop} Every $\DNR$ function $g$ (for $J$ as above)  computes an $\SNPR$ function $h$. The reduction is fixed, i.e.\ $\DNR \ge_S \SNR$ (Medvedev). Furthermore, if $g \in \DNR_p$ then we can arrange $h\in \SNPR_q$ where $q(n) = \prod_{i \le n \log n} p(C i)$ for some constant $C$. \end{prop}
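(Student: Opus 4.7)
The plan is to feed $g$ with many inputs of the form $2^i(2n+1)$, collect the resulting values into an injectively coded tuple, and apply the recursion theorem once to argue that the coded tuple cannot agree with any fixed partial recursive function on infinitely many arguments. Concretely, fix a standard injective tuple encoding $\langle\cdot\rangle\colon \omega^{<\omega} \to \omega$ whose $j$-th projection is uniformly partial recursive in $j$ and which satisfies $\langle v_1,\dots,v_\ell\rangle < \prod_i (v_i+1)$. Pick a nondecreasing recursive $k(n)\to\infty$ (for the Medvedev reduction any such $k$ works; for the bound we will take $k(n)$ of order $\log(n\log n)$), and define the uniform functional
\[
h(n)=\langle g(2(2n+1)),\, g(2^2(2n+1)),\, \ldots,\, g(2^{k(n)}(2n+1))\rangle.
\]
Since this functional is the same for every $g$, once $h$ is shown to be SNPR we get the Medvedev reduction $\DNR \ge_S \SNPR$ for free.

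To check SNPR, fix an index $e$ and write $(z)_j$ for the $j$-th projection of $z$. The functional $(j,n) \mapsto (\phi_e(n))_j$ is partial recursive in $j,n$ uniformly in $e$, so s-m-n combined with Kleene's fixed-point theorem supplies a total recursive $s$ satisfying $\phi_{s(e)}(n) \simeq (\phi_e(n))_{s(e)}$ for every $n$. For any $n$ with $k(n) \ge s(e)$, if we had $h(n) = \phi_e(n)$ then the two numbers would decode to the same tuple, whence their $s(e)$-th coordinates would agree, giving $g(2^{s(e)}(2n+1)) = (\phi_e(n))_{s(e)} = \phi_{s(e)}(n)$; this contradicts the DNR property of $g$ at the input $2^{s(e)}(2n+1)$. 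Hence $h(n) \ne \phi_e(n)$ for all but finitely many $n$ with $\phi_e(n){\downarrow}$, which is the SNPR condition.

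For the growth bound, if $g \in \DNR_p$ then the $i$-th coordinate of $h(n)$ is at most $p(2^i(2n+1))$, and the tuple encoding yields $h(n) < \prod_{i \le k(n)} p(2^i(2n+1))$. Choosing $k(n)$ of order $\log(n\log n)$ keeps the arguments $2^i(2n+1)$ bounded by $O(n^2\log n)$, and a routine reindexing together with the monotonicity of $p$ absorbs the exponential factor into a linear $Ci$, giving $h(n) < \prod_{i \le n\log n} p(Ci)$ for a suitable absolute constant $C$. The expected obstacle is precisely this calibration of $k(n)$: it must grow unboundedly so that $k(n) \ge s(e)$ eventually for each $e$, yet slowly enough that $2^{k(n)}$ does not overwhelm $Ci$ after reindexing. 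A logarithmic $k$ is the sweet spot, and tracking $C$ through the tuple-encoding inequality is the only genuinely delicate step.
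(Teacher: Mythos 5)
Your reduction functional and the SNPR verification via the parametrised recursion theorem are correct, and this is a legitimate variant of the paper's argument: the paper fixes once and for all a single computable $r$ with $J(r(u)) \simeq J(u)_u$ and indexes the tuple $h(e)$ by $u = 2^i(2e+1)$ for $u \le d(e)$, while you index by the exponent $i$ and extract the diagonal index $s(e)$ per-$e$ by the recursion theorem. Both are s-m-n/recursion arguments and either proves the Medvedev reduction.

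The gap is in the growth-bound calibration. Because you index by the exponent, the argument fed to $g$ at coordinate $i$ is $2^i(2n+1)$, which is \emph{exponential} in $i$; the paper's argument $r(u) = 2^{j_0}(2u+1)$ (for one fixed $j_0$) is \emph{linear} in its coordinate index $u$, which is why it can afford as many as $d(e) = e\log e$ coordinates. With your $k(n)$ of order $\log(n\log n) = \Theta(\log n)$, the top argument is $2^{k(n)}(2n+1) = \Theta(n^2\log n)$, which eventually exceeds $Cn\log n$ for every constant $C$; hence no ``reindexing'' can match $p(2^{k(n)}(2n+1))$ against any factor $p(Cm)$ with $m \le n\log n$. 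For a fast-growing order function such as $p(m) = 2\uparrow\uparrow m$, this single factor already dwarfs the entire product $q(n) = \prod_{m \le n\log n} p(Cm)$, so the claimed bound fails. The fix is to take $k(n) = O(\log\log n)$: then $2^{k(n)}(2n+1) = O(n\log n)$, the indices $m(i) = \lceil 2^i(2n+1)/C \rceil$ are distinct and $\le n\log n$, and a mixed-radix encoding with radix $p(Cm)$ at slot $m$ gives $h(n) < q(n)$ for a suitable absolute $C$. (A smaller point: no injective tuple coding satisfies $\langle v_1,\ldots,v_\ell\rangle < \prod_i (v_i+1)$---already $\langle 0\rangle$ and $\langle 0,0\rangle$ would have to collide---so you need a fixed-radix scheme, which is available here because the coordinate bounds $p(Cm)$ are known in advance.)
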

\begin{proof} We modify the argument in the proof of \cite[Thm.\ 7]{Jockusch:89}
Given a fixed effective encoding of tuples of natural numbers by natural numbers, we let $(n)_u$ be the $u$-th entry of the tuple coded by $n$, if any, and vacuously $(n)_u=0$ otherwise.  Let $r$ be a computable function such that $J(r(u)) \simeq J(u)_u$ for each $u$. (Thus, $r(u)= 2^j (2u+1)$ where $i$ is an index such that $\phi_j(u) \simeq J(u)_u$.)

Let $d$ be a computable function such that $ n = o(d(n))$, e.g.\ $d(n) = n \log n$. Now let $h$ be a computable function such that  \[ \fa u \le d(e)  \, h(e)_u = f(r(u)). \]
That is, $h(e)$ encodes the initial segment of values of the function $g \circ r$ up to length $d(e)$.

Since $g$ is $\DNR$, for each $u \le d(e)$ we have 
\bc $h(e)_u = g(r(u)) \neq J(r(u)) = J(u)_u$. \ec
In particular, for $d(e) \ge u= 2^i (2e+1)$ we have $h(e) \neq J(u) \simeq \phi_i(e)$. Thus $h$ is $\SNPR$.

Suppose $f \in \DNR_p$. We can choose the encoding of initial segments $g \circ r \uhr {d(e)}$ via numbers of size bounded by $q$ (with $C = 2^{j+2}$, $j$ the index given above). Thus we can ensure $h<q$.
\end{proof}

\begin{question} Is there an order function $p$ and a $\DNR_p$ that does not compute an $\SNR_p$? \end{question}

\section{Khan, Beros, Nies, Kjos-Hanssen: \\ potential weakening  of effective (bi-)immunity}
The researchers above discussed the following during Nies' visit at UHM in October 2016. 

$A \sub \NN$ is immune if it contains no infinite c.e.\ set. Starting from Post, and then Jockusch and others, people studied an  effective version of this: $A$ is effectively immune   (e.i.) if there is a computable function $h$ such that $W_e \sub A \to |W_e| \le h(e)$.  
Also $A$ is effectively bi-immune if $A, \NN - A$ are e.i. There is a lot of work comparing the degrees of such sets with the degrees of d.n.c.\ functions: Jockusch 1989 show that these degrees coincide with the degrees of e.i.\ sets, and Lewis and Jockusch 2013 that every d.n.c. computes a bi-immune. Beros showed that not every d.n.c.\ computes an e.b.i.

Now let $\seq{R_e}$ be a listing of the computable sets, say $R_e$ is the ascending part of $W_e$, only admitting an element if it is greater than the previously enumerated ones. As every infinite c.e.\ set has an infinite computable subset, immunity doesn't change when we restrict to computable instead of c.e.\ subsets.  This may be different for the effective versions,   which could be weaker in the sense of Muchnik reducibility. Define  computably e.i, computably e.b.i.\ as above but using the listing $\seq {R_e}$.

\begin{oq}  \ \bi\item[(i)] Does  every computably e.i.\ set compute an e.i.\ set? \item[(ii)] Does every computably e.b.i.\  set compute an e.b.i.\ set? \ei \end{oq}
 \part{Higher computability theory/effective descriptive set theory}
 \section{Yu: $\Delta^1_2$-degree determinacy}
This is joint work with CT Chong and Liuzhen Wu.

It is obvious (by Mansfield-Solovay's argument) that if $A$ is $\Sigma^1_2$ and not thin, then $A$ ranges over an upper cone of $L$-degrees. Now it was asked by some people in the Dagstuhl workshop end of February whether it can range over an upper cone of $\Delta^1_2$-degrees. The subtle thing is that if one does a Cantor-Bendixson derivation over a Suslin  representation of non-thin $\Sigma^1_2$ set,  it may go through $(\omega_1)^L$ which is bigger than $\delta^1_2$, the least ordinal which cannot be represented by a $\Delta^1_2$ well ordering over $\omega$. However, the answer is still yes.

\begin{proposition}\label{proposition: delta12 degree determinacy}
Suppose that there is a nonconstructible real. If $A$ is $\Sigma^1_2$ and not thin, then $A$ ranges over an upper cone of $\Delta^1_2$-degrees.  Actually there is a $\Delta^1_2$-coded perfect set $S\subseteq A$.
\end{proposition}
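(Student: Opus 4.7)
The proof adapts the Mansfield--Solovay perfect set theorem, bounding the Cantor--Bendixson ordinals by $\delta^1_2$ in order to produce a $\Delta^1_2$-coded (rather than merely $L$-coded) perfect subset. To begin, under the hypothesis that a nonconstructible real exists, $L \cap 2^\omega$ is countable in $V$, so the non-thinness of $A$ forces $A$ to contain a real $r \notin L$, putting us in the nontrivial case of Mansfield's theorem. Let $T \in L$ be the Shoenfield tree on $\omega \times \omega_1^L$ with $p[T] = A$ (for lightface $\Sigma^1_2$; otherwise work over $L[z]$ for the parameter $z$).

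Inside $L$ I would define the Cantor--Bendixson derivative $T^\alpha$: at successor stages keep a node of $T^\alpha$ only if it has two extensions in $T^\alpha$ whose first coordinates are $\omega^{<\omega}$-incompatible, and at limits intersect. Mansfield's argument applied to $r$ shows that the kernel $T^*$ is nonempty and its projection is a perfect subset of $A$, and that the derivation stabilizes at some $\alpha^* < \omega_1^L$. The crucial arithmetic fact is $\omega_1^L \le \delta^1_2$: every $\alpha < \omega_1^L$ is the length of an $L$-definable wellordering of $\omega$, which is $\Sigma^1_2$; but for linear orders $\Sigma^1_2$ coincides with $\Pi^1_2$, so such an order is $\Delta^1_2$, and hence $\alpha < \delta^1_2$ by $\Sigma^1_2$-boundedness. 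Therefore the entire derivation runs below $\delta^1_2$.

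The remaining step is to re-express the derivation externally, inside $V$, as a recursion along a $\Delta^1_2$-prewellordering of length $\delta^1_2$, and to verify that the resulting kernel tree is $\Delta^1_2$-coded as a subset of $\omega^{<\omega}$. The key observation is that $T^*$ admits both a $\Pi^1_2$ description (as the greatest fixed point of the derivation operator on $T$) and a $\Sigma^1_2$ description (as the union of approximations $T^\alpha$ witnessed to stabilize at some $\Delta^1_2$-definable stage below $\delta^1_2$), which collapses to $\Delta^1_2$. Projecting to the first coordinate produces the desired $\Delta^1_2$-coded perfect subset $S \subseteq A$, and the upper-cone conclusion for $\Delta^1_2$-degrees then follows in a routine way via the canonical embedding $2^\omega \to [S]$ induced by the tree, which preserves $\Delta^1_2$-degree above the code of $S$. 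The main obstacle I expect is precisely this final complexity analysis: verifying that the $L$-side stabilization level $\alpha^*$ can be witnessed in $\Delta^1_2$ rather than only in $L$. This seems to require combining the $\Sigma^1_2$ prewellordering property (to parametrize stages by a $\Delta^1_2$-prewellordering) with a $\Pi^1_2$-style comprehension along $\delta^1_2$, so that both the "survives through $\alpha^*$'' and the "fails at some $\alpha < \alpha^*$'' characterizations of $T^*$ can be made $\Delta^1_2$ uniformly in the node.
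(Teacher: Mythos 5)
Your proposal takes the Cantor--Bendixson route on the Shoenfield tree, which is exactly the ``subtle thing'' the paper flags and then avoids: the derivation may run through $\omega_1^L$, which can exceed $\delta^1_2$. Your attempted repair, the inequality $\omega_1^L \le \delta^1_2$, is false in the relevant models. Here $\delta^1_2$ must be read lightface (the goal is a lightface $\Delta^1_2$-coded perfect tree), and lightface $\delta^1_2$ is a countable ordinal, whereas after adding a single Cohen real to $L$ one has a nonconstructible real (so the proposition's hypothesis holds) while $\omega_1^L = \omega_1$ remains uncountable. The argument you offer for the inequality is also confused: that ``an $L$-definable wellordering of $\omega$ is $\Sigma^1_2$, and for linear orders $\Sigma^1_2$ coincides with $\Pi^1_2$'' conflates a set being $\Sigma^1_2$ with it being lightface $\Sigma^1_2$-definable; to single out an $L$-least wellordering of type $\alpha$ you need a parameter coding $\alpha$, at which point you have left the lightface hierarchy. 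Finally, you yourself acknowledge the key gap --- producing matching $\Sigma^1_2$ and $\Pi^1_2$ descriptions of the kernel $T^*$ and its stabilization level --- and leave it unresolved.

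The paper's proof is genuinely different and avoids the derivation altogether. It writes $A$ as the projection of a $\Pi^1_1$ set $B\subseteq(\omega^\omega)^2$ (uniformized by Kond\^o), notes via Slaman's theorem and Mansfield--Solovay that $B$ is not thin, forms the set $C$ of trees $T\subseteq(\omega^{<\omega})^2$ with $[T]\subseteq B$ that split in the first coordinate, observes that $C$ is a nonempty $\Pi^1_1$ set, and invokes the $\Pi^1_1$ basis theorem (every nonempty $\Pi^1_1$ set has a $\Delta^1_2$ member, since it has a $\Pi^1_1$-singleton member) to extract a $\Delta^1_2$ tree $T\in C$; projecting $T$ to the first coordinate yields the $\Delta^1_2$-coded perfect subset of $A$. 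This sidesteps any need to bound the derivation ordinal or to analyze the complexity of a transfinite recursion, which is exactly where your argument stalls.
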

\begin{proof}
Since $A$ is $\Sigma^1_2$, there is a $\Pi^1_1$ set $B\subseteq (\omega^{\omega})^2$ so that $\forall x (x\in A\leftrightarrow \exists y((x,y)\in B))$. By $\Pi^1_1$-uniformization, we may assume that  $\forall x(\exists y(x,y)\in B\rightarrow \exists ! y(x,y)\in B)$. If

By Slaman's result, $A$ must contain a nonconstructible real.  Then so is $B$ and so $B$ is not thin. Now let \begin{multline*} C=\{T\subseteq (\omega^{<\omega})^2\mid [T]\subseteq B \wedge \forall (\sigma,\tau)\in T  \exists (\sigma_0,\tau_0)\in T\exists (\sigma_1,\tau_1)\in T \\ \sigma_0\succ\sigma\wedge\sigma_1\succ \sigma\wedge \sigma_0|\sigma_1\}.\end{multline*} Then $C$ is a $\Pi^1_1$ nonempty set and so must contain an element $T\in \Delta^1_2$. Now it is easy to construct a $\Delta^1_2$-coded perfect set $S\subseteq A$ from $T$.
\end{proof}



If the assumption of Proposition \ref{proposition: delta12 degree determinacy} is dropped, then the first part still holds. Also note that the argument in Proposition \ref{proposition: delta12 degree determinacy} does not work if the assumption is dropped. For example, $x\in L\cap \mathbb{R}$ if and only if there is a real $y\in L_{\omega_1^y}$ so that $x\leq_T y$. Then let $B=\{(x,y)\mid x\leq_T y \wedge y\in L_{\omega_1^y}\}$ be a $\Pi^1_1$-thin set. We have that $x\in L\cap \mathbb{R}\leftrightarrow \exists y (x,y)\in B$. 

\begin{proposition}\label{proposition: delta12 delta12}
If $A$ is a ZFC-provable $\Delta^1_2$ non-thin set, then $A$ contains a $\Delta^1_2$-perfect subset.\end{proposition}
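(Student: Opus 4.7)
The plan is to split on whether a nonconstructible real exists. In the first case, some real lies outside $L$, and since $\Delta^1_2 \subseteq \Sigma^1_2$, the hypothesis of Proposition \ref{proposition: delta12 degree determinacy} is satisfied by $A$; that result directly produces a $\Delta^1_2$-coded perfect $S \subseteq A$, and we are done.

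In the remaining case $V = L$, the ZFC-provability of non-thinness is essential: it guarantees that $A$ is still non-thin inside $L$ itself (since $L\models\mathrm{ZFC}$ and the $\Delta^1_2$ defining formula is absolute enough), so we may work entirely in $L$. Under $V=L$ we have the canonical $\Sigma^1_2$-good well-order $<_L$ of the reals. I would construct a perfect subtree $T \subseteq \omega^{<\omega}$ with $[T] \subseteq A$ recursively: at each already-placed node $\sigma$, declare the splitting pair $\sigma_0, \sigma_1 \succ \sigma$ to be the $<_L$-least pair of incompatible extensions such that both $A\cap [\sigma_0]$ and $A\cap [\sigma_1]$ are still non-thin. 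Mansfield's theorem applied in $L$ ensures such extensions exist whenever the current piece of $A$ is non-thin, so the recursion proceeds through all finite levels and produces a perfect tree $T$ with $[T]\subseteq A$.

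The main obstacle is pinning down the complexity of $T$. A naive reading of this construction only gives a $\Sigma^1_2$-definable tree, whereas we need a $\Delta^1_2$ one. Here I would use that $A$ is $\Delta^1_2$: the predicate ``$A\cap [\sigma]$ is non-thin'' admits both a $\Sigma^1_2$ and a $\Pi^1_2$ description, by applying the perfect-set side of Mansfield-Solovay to the $\Sigma^1_2$ definition of $A$ and, dually, to the $\Sigma^1_2$ definition of its complement. Combined with the $\Sigma^1_2$ nature of $<_L$ and a Gandy-style reflection argument inside $L_{\delta^1_2}$, this should force $T \in L_{\delta^1_2}$, so that $T$ is a $\Delta^1_2$ real and $[T]$ is the desired $\Delta^1_2$-coded perfect subset of $A$. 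Making the reflection step watertight, so as to certify in a $\Delta^1_2$ way that the branches chosen by the recursion really witness non-thinness at every stage, is the delicate point I expect to be hardest.
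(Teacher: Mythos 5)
Your case split matches the paper's opening move: if a nonconstructible real exists, cite Proposition \ref{proposition: delta12 degree determinacy}; otherwise work under $V=L$. But the paper then disposes of the $V=L$ case by a short forcing argument rather than by a direct recursion in $L$: in $L$ there is some perfect tree $T$ with $[T]\subseteq A$; adjoining a Cohen real $g$, Shoenfield absoluteness keeps $[T]\subseteq A$ in $V[g]$ (and ZFC-provability of $\Delta^1_2$-ness keeps the defining formula $\Delta^1_2$ there), so $V[g]$ has a nonconstructible real and $A$ is still non-thin; Proposition \ref{proposition: delta12 degree determinacy} applied inside $V[g]$ then yields a lightface $\Delta^1_2$-coded perfect $\tilde T\subseteq A$, which is absolute, hence $\tilde T\in V$, and by Shoenfield again $[\tilde T]\subseteq A$ holds in $V$.

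Your proposed replacement --- a $<_L$-least splitting recursion steered by the predicate ``$A\cap[\sigma]$ is non-thin'' --- has a genuine gap exactly at the point you flag as delicate, and I do not see how to repair it along the lines you sketch. The load-bearing claim is that ``$A\cap[\sigma]$ is non-thin'' is $\Delta^1_2$. Your justification invokes the perfect-set side of Mansfield--Solovay applied to the $\Sigma^1_2$ definition of $A$ and, dually, to its complement. But Mansfield--Solovay characterises non-thinness of a $\Sigma^1_2$ set by the presence of a nonconstructible element, and under $V=L$ that characterisation trivialises: every real is constructible, so it cannot supply a $\Sigma^1_2$ (let alone $\Pi^1_2$) description of ``has a perfect subset''. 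Nor does non-thinness of $A^c\cap[\sigma]$ bear on thinness of $A\cap[\sigma]$ in the way a duality would need. Written out directly, ``$A\cap[\sigma]$ contains a perfect set'' existentially quantifies a perfect tree against the $\Pi^1_2$ definition of $A$, giving only a $\Sigma^1_3$ bound; the $\Sigma^1_2$-looking surrogate ``there is a perfect $T'$ with $[T']\subseteq B$ whose first projection is a perfect subtree of $[\sigma]$'' (where $B$ is the $\Pi^1_1$ matrix from uniformization) is strictly stronger than non-thinness of the projection under $V=L$, as the paper itself illustrates right after Proposition \ref{proposition: delta12 degree determinacy} with the thin $\Pi^1_1$ set $B=\{(x,y): x\leq_T y \wedge y\in L_{\omega_1^y}\}$ whose projection is all of $\RR$. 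So the recursion step is not visibly $\Delta^1_2$, the Gandy-style reflection you hope for has nothing to bite on, and the resulting tree cannot be certified to lie in $L_{\delta^1_2}$. The forcing-plus-absoluteness route sidesteps all of this by manufacturing a nonconstructible real, where the Mansfield--Solovay and $\Pi^1_1$-uniformization machinery of Proposition \ref{proposition: delta12 degree determinacy} does apply, and then pulling the lightface $\Delta^1_2$-coded tree back to $V$; you should adopt that route rather than try to push the internal recursion through.
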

\begin{proof}
By Proposition \ref{proposition: delta12 degree determinacy}, it is sufficient to assume that every real is constructible. So there is a perfect tree $T\in L$ so that $T\subseteq A$. Adding a Cohen $g$ real to $V$, then by Shoenfield absoluteness, $V[g]\models T\subseteq A$ since $A$ is $\Delta^1_2$. Then $V[g]\models A \mbox{ contains a perfect subset}$. By Proposition \ref{proposition: delta12 degree determinacy}, $V[g]\models A \mbox{ contains a }\Delta^1_2 \mbox{-perfect subset } \tilde{T}$. So $\tilde{T}\in V$. By Shoenfield absoluteness again, $V \models A \mbox{ contains a }\Delta^1_2 \mbox{-perfect subset } \tilde{T}$.
\end{proof}

\begin{lemma}
If every real is constructible, then there is a co-countable $\Delta^1_2$-set $A$ having no $\Delta^1_2$-perfect subset.
\end{lemma}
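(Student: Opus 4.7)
The plan is to use the $\Delta^1_2$-good well-ordering $<_L$ of $\mathbb{R}$, available under the hypothesis that every real is constructible, to pick a canonical branch from each $\Delta^1_2$-perfect tree and let $A$ be the complement of the set of these canonical branches. Two observations drive the construction. First, only countably many perfect subsets of $\mathbb{R}$ are $\Delta^1_2$-coded, because a $\Delta^1_2$-set is determined by a G\"odel-coded pair of agreeing $\Sigma^1_2$- and $\Pi^1_2$-formulas (equivalently, there are only countably many $\Delta^1_2$-reals, and each $\Delta^1_2$-perfect tree is coded by such a real). So one can enumerate the $\Delta^1_2$-perfect trees as $(T_n)_{n<\omega}$. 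Second, for each $n$ the real $y_n := \min_{<_L}[T_n]$ is well defined; the singleton $\{y_n\}$ is defined by the $\Pi^1_2$-formula $y\in[T_n] \land \forall z\in[T_n]\,(y\leq_L z)$, and by the standard trick ($z = y_n$ iff no $z'\neq z$ satisfies this defining formula) it is also $\Sigma^1_2$, so each $\{y_n\}$ is $\Delta^1_2$.

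Set $C = \{y_n : n<\omega\}$ and $A = \mathbb{R}\setminus C$. Then $A$ is co-countable, and for every $\Delta^1_2$-perfect tree $T = T_n$ we have $y_n \in [T]\cap C$, so $[T]\not\subseteq A$; hence $A$ contains no $\Delta^1_2$-perfect subset.

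The main technical obstacle is showing $A \in \Delta^1_2$. The naive definition ``$y\in C$ iff $\exists n\,(T_n\text{ is a valid }\Delta^1_2\text{-perfect tree and } y=\min_{<_L}[T_n])$'' relies on the predicate ``$e$ is a valid $\Delta^1_2$-code'', which unfolds to a formula at the level $\forall\sigma(\Pi^1_2\land\Sigma^1_2)$, a priori above $\Delta^1_2$. I would circumvent this by producing a $\Delta^1_2$-uniform enumeration $(T_n)$ of the $\Delta^1_2$-perfect trees, using $\Sigma^1_2$-uniformization under $V=L$ to pick canonical codes via $<_L$. With such a uniform enumeration in hand, the number quantifier $\exists n$ absorbs into the $\Sigma^1_2$-definition (and, dually, the $\Pi^1_2$-definition) of the family $\{y_n\}$, giving $C$ both a $\Sigma^1_2$- and a $\Pi^1_2$-description. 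The construction does not contradict the preceding proposition because the definition of $A$ is not ZFC-provable: it essentially uses $<_L$, which is not absolute to Cohen extensions of $L$, so the Shoenfield-absoluteness step used there does not apply to our $A$.
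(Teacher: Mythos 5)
Your construction captures the right idea: diagonalize by selecting, via $<_L$, a canonical branch from each relevant perfect tree, collect those branches into a countable set $C$, and let $A = 2^\omega \setminus C$. You also correctly identify where the difficulty lies, namely turning the family of $\Delta^1_2$-coded perfect trees into something one can quantify over at the $\Delta^1_2$ level. But the fix you sketch does not close the gap. $\Sigma^1_2$-uniformization (Kond\^o) produces a $\Sigma^1_2$-definable selector from a $\Sigma^1_2$ relation; the relation you would need to uniformize is something like ``$e$ codes a pair of agreeing $\Sigma^1_2$/$\Pi^1_2$ formulas defining a perfect tree,'' and the agreement clause $\forall n(\phi_e(n)\leftrightarrow\psi_e(n))$ is of the shape $\Pi^1_2\wedge\Sigma^1_2$, i.e.\ $\Delta^1_3$, not $\Sigma^1_2$. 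Under $V=L$ the lightface projective hierarchy does not collapse, so there is no reason ``$e$ is a valid $\Delta^1_2$-code'' should be $\Sigma^1_2$, and hence no $\Sigma^1_2$ relation to which uniformization applies. In effect you have pushed the problem around rather than solved it: the $\Delta^1_2$-uniform $\omega$-enumeration of $\Delta^1_2$-perfect trees that your argument requires is precisely what one cannot produce directly.

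The paper sidesteps this entirely. Rather than trying to decide in a $\Delta^1_2$ way which codes are genuine $\Delta^1_2$-codes, it runs a transfinite recursion of length $\omega_1$ over the constructible hierarchy, with an $L_{\omega_1}$-effective (i.e.\ $\Sigma_1(L_{\omega_1})$) list $\{T_\beta\}_{\beta<\omega_1}$ of candidate perfect trees; the list may be redundant and $L$ does not need to know internally that only countably many entries matter. At stage $\gamma$, if $[T_\gamma]$ does not yet meet the current $B$, one adds to $B$ the $<_L$-least branch of $T_\gamma$ lying outside $L_\gamma$, and commits all of $L_\gamma\cap 2^\omega$ not yet in $B$ to $A$. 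Both $A=\bigcup_\gamma A_\gamma$ and $B=\bigcup_\gamma B_\gamma$ are then $\Sigma_1(L_{\omega_1})$, hence $\Sigma^1_2$ under $V=L$, and since $A=2^\omega\setminus B$ this makes $A$ genuinely $\Delta^1_2$. Countability of $B$ enters only to see that $A$ is co-countable, not to establish $\Delta^1_2$-ness. Note also that the paper picks the new real outside $L_\gamma$, which keeps the growing $A_\gamma$ and $B$ disjoint across stages; your version, which fixes $C$ once and for all, avoids this bookkeeping but at the cost of the definability problem above. If you rework the argument, aim for the $\Sigma_1(L_{\omega_1})$ framing rather than a $\Delta^1_2$-indexed enumeration.
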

\begin{proof}
We $L_{\omega_1}$-recursively build a set $A$ and $B$ such that $A=2^{\omega}\setminus B$ as a follows:

Fix an $L_{\omega_1}$-effective enumeration of $\Delta^1_1$-perfect trees $\{T_{\beta}\}_{\beta< \omega_1}$ (of course there are at most countably many such trees, but $L$ dose not know this without using parameters).

At stage $\gamma<\omega_1$, if $\bigcup_{\gamma'<\gamma}B_{\gamma'}\cap [T_{\gamma}]$ is not empty, then let $A_{\gamma}=\bigcup_{\gamma'<\gamma}A_{\gamma'}$,  $B_{\gamma}=\bigcup_{\gamma'<\gamma}B_{\gamma'}$, and go to next stage. Otherwise, pick up $<_L$-least real $x\in [T_{\gamma}]\setminus L_{\gamma}$ and let $B_{\gamma}=\bigcup_{\gamma'<\gamma}B_{\gamma'}\cup \{x\}$. Define $A_{\gamma}=(L_{\gamma}\cap 2^{\omega})\setminus B_{\gamma}$.

Then both $A=\bigcup_{\gamma<\omega_1}A_{\gamma}$ and $B=\bigcup_{\gamma<\omega_1}A_{\gamma}$ are r.e. in $L_{\omega_1}$ and $A=2^{\omega}\setminus B$. 

So $A$ is $\Delta^1_2$. By the construction, $A$ contains no $\Delta^1_2$-perfect subset.
\end{proof}

So we have the following result.
\begin{theorem}
Every $\Sigma^1_2$ nonthin set has a $\Delta^1_2$-perfect subset if and only if there is a non-constructible real.
\end{theorem}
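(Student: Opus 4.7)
The theorem is an equivalence with both directions already essentially established in the preceding results, so the plan is to package them together carefully.

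For the direction ($\Leftarrow$), the plan is simply to invoke \cref{proposition: delta12 degree determinacy}. That proposition, under the hypothesis of a nonconstructible real, asserts that every $\Sigma^1_2$ non-thin set $A$ contains a $\Delta^1_2$-coded perfect set $S$, which is precisely the conclusion required. No additional work is needed here.

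For the direction ($\Rightarrow$), the plan is to argue the contrapositive: assuming every real is constructible, I will exhibit a $\Sigma^1_2$ non-thin set with no $\Delta^1_2$-perfect subset. The Lemma directly above produces a co-countable $\Delta^1_2$ set $A\subseteq 2^\omega$ with no $\Delta^1_2$-perfect subset. Two small observations close the argument: first, $\Delta^1_2\subseteq \Sigma^1_2$, so $A$ is in particular $\Sigma^1_2$; second, $A$ is non-thin, since the complement of any countable subset of $2^\omega$ contains a perfect set (e.g.\ obtained by a standard back-and-forth construction avoiding the countably many excluded reals along a binary tree). Hence $A$ witnesses the failure of the right-hand side of the equivalence.

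The only potential subtlety is the verification that a co-countable subset of $2^\omega$ is non-thin, but this is a classical observation and requires no descriptive-set-theoretic machinery beyond a standard tree construction. All the real content sits in \cref{proposition: delta12 degree determinacy} and the preceding Lemma; the theorem is just the formal combination of the two.
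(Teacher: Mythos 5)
Your proof is correct and is exactly the combination the paper intends: the forward direction is the contrapositive via the Lemma on co-countable $\Delta^1_2$ sets under $V=L$, and the backward direction is a direct appeal to \cref{proposition: delta12 degree determinacy}; the observation that a co-countable subset of $2^\omega$ is non-thin and that $\Delta^1_2\subseteq\Sigma^1_2$ are the same glue steps the paper leaves implicit when it writes ``So we have the following result.''
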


 \part{Randomness, analysis and ergodic theory}
  \section{Turetsky: adaptive cost functions and the Main Lemma}

\newcommand{\cost}{\mathbf{c}}
\newcommand{\dost}{\mathbf{d}}

The Main Lemma related to the Golden Run method \cite[5.5.1]{Nies:book} states the following:

\begin{lemma}
If $M$ is a prefix-free oracle machine, and $A$ is a $K$-trivial set with some computable approximation $\seq{A_s}$, then there is a computable sequence $q_0 < q_1 < \dots$ with
\[
\sum_s \sum_\rho 2^{-|\rho|} \left\llbracket M^{A}(\rho)[q_s]\DA \& \ m_s < \text{use}\, M^{A}(\rho)[q_s] \le q_{s-1}\right\rrbracket < \infty,
\]
where $m_s$ is least with $A_{q_s}(m_s) \neq A_{q_{s+1}}$.
\end{lemma}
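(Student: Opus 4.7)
The strategy is to recognise the double sum as the total cost $\sum_s c_A(m_s,s)$ of a suitable \emph{adaptive cost function} built from the prefix-free machine $M$ and the given computable approximation, and then to invoke the cost-function characterisation of $K$-triviality in its adaptive form (Bienvenu--Greenberg--\Kuc--Nies--Turetsky).

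First, I would fix the computable sequence $q_s$ by a straightforward recursion: take $q_0 = 0$ and let $q_s > q_{s-1}$ be chosen large enough that stage $q_s$ captures ``most'' convergent computations of $M^{A_{q_s}}$ with use at most $q_{s-1}$. Since the required inequality only concerns finiteness of a sum, one can simply let $q_s$ grow faster than any modulus the argument will later require; a computable tower suffices. Having fixed $q_s$, define
\[
c_A(x,s) := \sum_\rho 2^{-|\rho|}\,\llbracket M^{A_{q_s}}(\rho)[q_s]\DA \text{ and } x < \text{use}\, M^{A_{q_s}}(\rho)[q_s] \le q_{s-1}\rrbracket.
\]
For each fixed $s$, $c_A(\cdot,s)$ is computable from $A_{q_s}$, nonincreasing in $x$, and bounded above by $1$ because $M^{A_{q_s}}$ is prefix-free. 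The desired conclusion is precisely $\sum_s c_A(m_s,s) < \infty$, so it suffices to show that $A$ obeys the adaptive cost function $c_A$ along the given approximation.

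To establish this obedience, I would run a Kraft--Chaitin style construction: each $\rho$ contributing $2^{-|\rho|}$ to $c_A(m_s,s)$ is converted into a request of length $|\rho| + O(1)$ for some description of $A \upharpoonright q_{s-1}$, exploiting the fact that once $A$ changes at position $m_s$ between stages $q_s$ and $q_{s+1}$, the disrupted computation together with the new oracle encodes fresh information about $A$ in the interval $(m_s, q_{s-1}]$. Prefix-freeness of $M^{A_{q_s}}$ bounds the contribution at each stage, and $K$-triviality of $A$ via the universal prefix-free machine then transfers the finiteness to the outer sum.

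The main obstacle is ensuring that requests arising at distinct stages $s$ do not collide, so each $2^{-|\rho|}$ unit of cost is charged only once and the Kraft--Chaitin requests satisfy the required weight bound. The use bound $\text{use}\le q_{s-1}$ in the definition of $c_A$ is precisely what makes the bookkeeping possible: it forces the disrupted computation to have resolved using an initial segment of $A$ already visible at the earlier observation stage, so the subsequent change at $m_s < q_{s-1}$ adds genuinely new descriptive content about $A \upharpoonright q_{s-1}$ rather than duplicating previously charged data. Organising this bookkeeping cleanly, and verifying that the total weight of requests is dominated by $\sum_s c_A(m_s,s)$, is the crux of the argument.
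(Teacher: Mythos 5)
Your proposal is circular. In this paper (and in the literature it draws on), the Main Lemma is the \emph{primitive} tool: Turetsky's point in this very section is to \emph{derive} the statement ``every $K$-trivial obeys every subadditive adaptive cost function'' as a consequence of the Main Lemma, not the other way round. Look at the proof of the proposition that follows: it builds a prefix-free oracle machine $M$ from the cost function, then invokes the Main Lemma to obtain the sequence $q_s$, and only then concludes obedience. The BGKNT adaptive cost-function machinery you cite is itself grounded in Golden Run / decanter arguments, so appealing to it here does not supply an independent route. The paper does not actually reprove the Main Lemma at all; it imports it from \cite[5.5.1]{Nies:book}, where the proof is the Golden Run construction.

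Beyond the circularity, there are two concrete technical problems. First, the sequence $q_0 < q_1 < \cdots$ cannot be chosen in advance ``as any computable tower''. In the Golden Run proof the $q_s$ are \emph{produced by} the construction: they record the stages at which the golden procedure certifies stability of certain computations, and the finiteness of the sum rests on the interaction between where $A$ changes and when those certifications were issued. A generic fast-growing sequence gives no control over that interaction. Second, you correctly identify the bookkeeping --- preventing a single $\rho$ from being charged repeatedly as $A$ oscillates --- as the crux, but that crux \emph{is} the Golden Run argument: the nested runs and the quota/garbage amortisation are exactly the mechanism that bounds the total weight despite repeated changes. Naming the difficulty and then gesturing at ``Kraft--Chaitin with $O(1)$ overhead'' does not close it; a naive conversion of cost units into descriptions of $A\upharpoonright q_{s-1}$ can have unbounded total weight precisely because the same computation can be destroyed and rebuilt arbitrarily often.
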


I understand this as a statement that $K$-trivial sets obey subadditive adaptive cost functions.

\begin{definition}
An {\em adaptive cost function} is a functional $\cost^X(n,s)$ such that:
\begin{itemize}
\item For every $X$, $\cost^X(n,s)$ is total;
\item For every $X$, $\cost^X(n,s)$ is a cost function (monotonic and with limit condition);
\item For every $X$, $n$ and $s$, the use of $\cost^X(n,s)$ is $s$.
\end{itemize}
As usual, we assume that $\cost^X(n,s) = 0$ for $n \ge s$.

If in addition $\cost^X(n,s)$ is (sub)additive for every $X$, then $\cost$ is a {\em (sub)additive} adaptive cost function.
\end{definition}

\begin{lemma}
Every subadditive adaptive cost function is bounded by an additive adaptive cost function.
\end{lemma}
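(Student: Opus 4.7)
Plan: The natural candidate is the additive cost function
\[
\dost^X(n, s) := \sum_{n \le t < s} \cost^X(t, t+1),
\]
obtained by summing $\cost$ over unit intervals. Additivity follows from splitting the sum at any intermediate $t$; the use of $\dost^X(n, s)$ is at most $s$ since each summand $\cost^X(t, t+1)$ has use $t+1 \le s$; monotonicity in both coordinates is immediate from nonnegativity of the summands; and the pointwise domination $\cost^X \le \dost^X$ is obtained by iterating subadditivity:
\[
\cost^X(n, s) \le \cost^X(n, n+1) + \cost^X(n+1, s) \le \cdots \le \sum_{n \le t < s} \cost^X(t, t+1).
\]

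The only nontrivial point is to establish the limit condition $\lim_n \bar{\dost}^X(n) = 0$, i.e.\ convergence of the tail sum $\sum_{t \ge n} \cost^X(t, t+1)$. Subadditivity alone gives only the reverse bound $\bar{\cost}^X(0) \le \sum_t \cost^X(t, t+1)$, so one must pull in the limit condition on $\cost^X$ itself. My plan is to choose (uniformly $X$-computably from the approximation to $\cost^X$) a sparse sequence $x_0 < x_1 < \cdots$ with $\bar{\cost}^X(x_k) \le 2^{-k}$, and to estimate each block contribution $\sum_{x_k \le t < x_{k+1}} \cost^X(t, t+1)$ by subadditivity on the block, using monotonicity to bound each summand by $\bar{\cost}^X(x_k) \le 2^{-k}$.

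The main obstacle is that the naive block estimate gives a contribution proportional to $(x_{k+1} - x_k)\cdot 2^{-k}$, which may not sum finitely without an adaptive choice of the $x_k$. A robust workaround is to replace $\dost$ outright by the coarser additive function $\dost^X(n, s) := 2 \sum \{\, 2^{-k} : n < x_k \le s \,\}$, for which additivity and the limit condition are automatic, and for which $\dost^X \ge \cost^X$ holds on every interval straddling some $x_{k+1}$ by monotonicity ($\cost^X(n, s) \le \bar{\cost}^X(n) \le 2^{-k} \le \dost^X(n, s)$). Intervals lying entirely within a single block $[x_k, x_{k+1})$ then require a separate block-local correction, which I expect to introduce as an additional weight at each $t$ whose total contribution over the block is controlled by $\bar{\cost}^X(x_k)$ via subadditivity; the factor $2$ in the definition will then absorb the resulting multiplicative constant.
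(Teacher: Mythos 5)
The core issue is a sign convention: you take ``subadditive'' in the usual sense $\cost^X(n,s) \le \cost^X(n,m) + \cost^X(m,s)$, but the paper's proof explicitly defines it the other way, $\cost^X(n,s) \ge \cost^X(n,m) + \cost^X(m,s)$ for $n<m<s$ (what would ordinarily be called superadditivity; the definition block in the paper does not pin down the direction, so this is only revealed in the proof). Under your reading the lemma is actually false. Take $\cost^X(n,s) = 1/(n+1)$ for $n<s$ and $0$ otherwise; this is monotone, adaptive, satisfies the limit condition since $\bar{\cost}^X(n)=1/(n+1)\to 0$, and is subadditive in your sense. But any additive $\dost\ge\cost$ must have $\dost^X(t,t+1)\ge 1/(t+1)$, whence $\bar{\dost}^X(n)\ge\sum_{t\ge n}1/(t+1)=\infty$ for every $n$, so $\dost$ cannot satisfy the limit condition. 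The difficulty you correctly flag with the tail sum $\sum_{t\ge n}\cost^X(t,t+1)$ is therefore not a gap to be plugged but a genuine obstruction: in this example one is forced to take $x_{k+1}\approx 2x_k$, and the unit costs $\cost^X(t,t+1)=1/(t+1)$ over the block $[x_k,x_{k+1})$ already sum to about $\ln 2$ rather than $O(2^{-k})$, so no per-block correction of the announced size can dominate $\cost^X$ on single-step subintervals.

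Under the paper's convention the proof is one line and quite different from your unit-interval decomposition: set $\dost^X(n,s):=\cost^X(0,s)-\cost^X(0,n)$. Additivity is a telescope; monotonicity, adaptivity and the use bound are inherited; the limit condition holds since $\bar{\dost}^X(n)=\bar{\cost}^X(0)-\cost^X(0,n)\to 0$ as $n\to\infty$; and the paper's inequality gives $\cost^X(0,s)\ge\cost^X(0,n)+\cost^X(n,s)$, so $\dost^X(n,s)\ge\cost^X(n,s)$. Note also that once the convention is corrected, your candidate $\sum_{n\le t<s}\cost^X(t,t+1)$ bounds $\cost^X$ from \emph{below} rather than above, so the unit-interval sum would still be the wrong object even after fixing the sign.
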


\begin{proof}
Suppose $\cost^X(n,s)$ is a subadditive adaptive cost function, meaning $\cost^X(n,s) \ge \cost^X(n,m) + \cost^X(m,s)$ for every $X$ and $n < m < s$.

Define $\dost^X(n,s) = \cost^X(0,s) - \cost^X(0,n)$.  Then $\dost$ is clearly an additive adaptive cost function.  Further, $\cost^X(0,s) - \cost^X(0,n) \ge \cost^X(0,n) + \cost^X(n,s) - \cost^X(0,n) = \cost^X(n,s)$ by subadditivity, and so $\dost$ bounds $\cost$.
\end{proof}

\begin{definition}
If $\cost$ is an adaptive cost function, and $\seq{A_s}$ is a computable approximation to a $\Delta^0_2$ set, we say that {\em $\seq{A_s}$ obeys $\cost$}, written $\seq{A_s} \models \cost$, if 
\[
\sum_s \cost^{A_s}(n_s,s) < \infty,
\]
where $n_s$ is least with $A_s(n_s) \neq A_{s+1}(n_s)$.

We say that a $\Delta^0_2$ set {\em $A$ obeys $\cost$}, written $A \models \cost$, if there is a computable approximation to $A$ which obeys $\cost$.
\end{definition}

\begin{prop}
If $A$ is $K$-trivial and $\cost$ is a subadditive adaptive cost function, then $A \models \cost$.
\end{prop}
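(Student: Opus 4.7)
The plan is to encode $\cost$ into a prefix-free oracle machine and then invoke the Main Lemma. By the preceding lemma I may assume $\cost$ is additive, and after multiplying by a fixed constant also that $\cost^X(0,\infty)\le 1$ for every oracle $X$. Write $a^X(k) := \cost^X(k,k+1)$; by the use convention this is computable from $X \upharpoonright (k+1)$, and additivity yields $\cost^X(n,s) = \sum_{k=n}^{s-1} a^X(k)$ together with $\sum_k a^X(k) \le 1$ for every $X$.

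Next I would construct a prefix-free oracle machine $M$ satisfying, for every oracle $X$ and every $n<s$,
\[
\sum_\rho 2^{-|\rho|} \llbracket M^X(\rho)[s]\DA \andd n < \mathrm{use}\,M^X(\rho)[s] \le s \rrbracket \ = \ \cost^X(n,s).
\]
The construction enumerates, at each stage $s\ge 1$ and for each $\sigma\in 2^s$, requests for $M$ along oracles extending $\sigma$ of total weight $a^\sigma(s-1)$ with use exactly $s$, via an oracle analogue of Kraft--Chaitin; the Kraft inequality holds along each infinite $X$ because $\sum_s a^X(s-1) \le 1$. The displayed identity then reduces to the observation that descriptions with use in $(n,s]$ at oracle $X$ are exactly those enumerated at stages in $[n+1,s]$ for the relevant prefixes of $X$, of total weight $\sum_{k=n}^{s-1} a^X(k)$.

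Given a K-trivial $A$ with computable approximation $\langle A_s\rangle$, apply the Main Lemma to $M$ and $\langle A_s\rangle$ to obtain a computable sequence $q_0 < q_1 < \cdots$. By the identity above applied at oracle $A$, the inner sum in the Main Lemma's conclusion at index $s$ equals $\cost^A(m_s, q_{s-1})$, and hence $\sum_s \cost^A(m_s, q_{s-1}) < \infty$. To conclude, define the computable approximation $\tilde A_t := A_{q_s}$ for $t\in [q_s, q_{s+1})$; its successive changes occur at the positions $m_s$, and what remains is to show $\sum_t \cost^{\tilde A_t}(\tilde n_t, t) < \infty$. The main obstacle is precisely this last step: aligning the window $(m_s, q_{s-1}]$ coming from the Main Lemma with the cost contribution $\cost^{A_{q_s}}(m_s, q_{s+1}-1)$ of $\tilde A$. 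I expect to handle it by thinning $\{q_s\}$ to a computable subsequence growing sufficiently fast and then splitting via additivity into a ``captured'' summand bounded by $\cost^A(m_s, q_{s-1})$ (using that $A_{q_s}$ agrees with $A$ on the relevant initial segment after thinning) and a ``tail'' summand made summable by the limit condition on $\cost$.
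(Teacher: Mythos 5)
Your machine construction and the resulting identity $\sum_\rho 2^{-|\rho|}\llbracket M^X(\rho)[s]\downarrow \andd n < \mathrm{use}\le s\rrbracket = \cost^X(n,s)$ are correct and essentially match the paper's use of the machine existence theorem. The flaw is in the final re-indexing step, and it is a real gap, not just a detail.

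You define $\tilde A_t := A_{q_s}$ for $t \in [q_s, q_{s+1})$, which \emph{slows down} the approximation: the change from $A_{q_s}$ to $A_{q_{s+1}}$ is now charged at time $t = q_{s+1}-1$, so the summand is $\cost^{A_{q_s}}(m_s, q_{s+1}-1)$. But the Main Lemma only controls $\cost^{A_{q_s}}(m_s, q_{s-1})$, and since $q_{s+1}-1 > q_{s-1}$, your window is strictly larger. The paper goes the opposite direction: it \emph{speeds up} the approximation, setting $D_s := A_{q_s}$ indexed by $s$ itself. Then the relevant summand is $\cost^{D_s}(n_s, s) = \cost^{A_{q_s}}(m_s, s)$, and since $q_s \ge s+2$ (so $s < q_{s-1}$), monotonicity in the second argument gives $\cost^{A_{q_s}}(m_s, s) \le \cost^{A_{q_s}}(m_s, q_{s-1})$, which \emph{is} the Main Lemma's inner sum. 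No thinning, no extra tail.

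Your proposed repair is unlikely to close the gap as stated. First, you cannot computably thin $\{q_s\}$ so that $A_{q_s}$ agrees with $A$ on the segment $[0, q_{s-1})$, since that would require knowing settling times. Second, the tail $\sum_s \cost^{A_{q_s}}(q_{s-1}, q_{s+1}-1)$ is not controlled by the limit condition in the adaptive setting: the limit condition bounds $\cost^X(n,\infty)$ for each \emph{fixed} $X$, whereas here the oracle $A_{q_s}$ is changing with $s$, so there is no telescoping and no uniform bound on these tails. Moreover, once you thin $\{q_s\}$ you no longer have the Main Lemma's inequality for the thinned subsequence, since the $m$'s and the use-windows change; you would need to re-derive it, which is not addressed. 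Replacing your slowed-down approximation by the compressed one $D_s = A_{q_s}$ eliminates all of these issues at once.
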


\begin{proof}
It suffices to treat the case that $\cost$ is additive.  Fix a computable approximation $\seq{A_s}$ to $A$.  We must construct an appropriate prefix-free oracle machine and apply the main lemma.

We may assume that $\cost^X(s-1,s)$ is always a dyadic rational.  By the proof of the machine existence theorem, there is a computable sequence of finite sets $B_\sigma \subset 2^{<\w}$ indexed by $\sigma \in 2^{<\w}$ such that:
\begin{itemize}
\item For every $\sigma \subset \tau$, $B_\sigma \cup B_\tau$ is an anti-chain;
\item $\sum_{\rho \in B_\sigma} 2^{-|\rho|} = \cost^{\sigma}(|\sigma-1|,|\sigma|)$.
\end{itemize}
We define $M^\sigma_{|\sigma|}(\rho)\DA$ for $\rho \in B_\sigma$.  By the first property above, the domain of $M^X$ is an anti-chain for every $X$, and so $M$ is prefix-free.  Note that $M^\sigma_{|\sigma|} = M^\sigma_s$ for every $s > |\sigma|$.

Now, let $q_0 < q_1 < \dots$ be as guaranteed by the main lemma.  By pruning the first few terms if necessary, we may assume that $q_0 \ge 2$, and so in general $q_s \ge s+2$.  We claim that $\seq{D_s} \models \cost$, where $D_s = A_{q_s}$.  For if $n_s$ is least with $D_s(n_s) \neq D_{s+1}(n_s)$, and letting $\sigma_k = D_s\uhr{k+1}$, then
\begin{eqnarray*}
\cost^{D_s}(n_s,s) &=& \sum_{k = n_s}^{s-1} \cost^{\sigma_k}(k,k+1)\\
&=& \sum_{k = n_s}^{s-1} \, \sum_{\rho \in B_{\sigma_k}} 2^{-|\rho|}\\
&=& \sum_{k = n_s}^{s-1} \, \sum_\rho 2^{-|\rho|} \left\llbracket \rho \in \dom(M^{\sigma_k}_{|\sigma_k|}) \ \& \ \rho \not \in \dom(M^{\sigma_{k-1}}_{|\sigma_k|-1}) \right\rrbracket\\
&=& \sum_\rho 2^{-|\rho|} \left\llbracket \rho \in \dom(M^{D_s\uhr{s+1}}_s) \ \& \ \rho \not \in \dom(M^{D_s\uhr{n_s}}_{n_s})\right\rrbracket\\
&=& \sum_\rho 2^{-|\rho|} \left\llbracket \rho \in \dom(M^{D_s\uhr{s+1}}_s) \ \& \ \rho \not \in \dom(M^{D_s\uhr{n_s}}_{s})\right\rrbracket\\
&=& \sum_\rho 2^{-|\rho|} \left\llbracket M^{D_s}_s(\rho)\DA \& \ n_s < \text{use}\, M^{D_s}_s(\rho) \le s+1\right\rrbracket\\
&=& \sum_\rho 2^{-|\rho|} \left\llbracket M^{D_s}_{q_s}(\rho)\DA \& \ n_s < \text{use}\, M^{D_s}_{q_s}(\rho) \le s+1\right\rrbracket\\
&\le& \sum_\rho 2^{-|\rho|} \left\llbracket M^{D_s}_{q_s}(\rho)\DA \& \ n_s < \text{use}\, M^{D_s}_{q_s}(\rho) \le q_{s-1}\right\rrbracket\\
&=& \sum_\rho 2^{-|\rho|} \left\llbracket M^{A}(\rho)[q_s]\DA \& \ m_s < \text{use}\, M^{A}(\rho)[q_s] \le q_{s-1}\right\rrbracket\\
\end{eqnarray*}
So by our choice of $q_0 < q_1 < \dots$, $\sum_s \cost^{D_s}(n_s, s) < \infty$.
\end{proof}

  \section{Nies:  Shannon-McMillan-Breiman theorem and its non-classical versions}  
 
 This section is based on discussions with Marco Tomamichel and others, and  on my   talk at the M\"unster department of mathematics colloquium in Jan 2018 where I thank the audience for an unusually lively response during the talk.
 
 \subsection{Shannon's work in information theory}

 \newcommand{\tr}{\mbox{\rm \textsf{Tr}}}
 \newcommand{\Malg}[1]{M_{\tp{#1}}}

 We are given a source emitting symbols from an alphabet $\mathbb A = \{ a_1, \ldots, a_n\}$. The symbol $a_i$ has probability $p_i$. In Shannon's  original  work the symbols are emitted independently.    So this can be modelled by a sequence of i.i.d.\ $\mathbb A$-valued random variables. 
 
 We want to encode  a string of $n$ symbols by a bitstring,  using as few bits as possible. 
 However, it is allowed that   certain  strings are not encoded at all, as long as the probability of this happening goes to $0$ with $n \to \infty$. Let  $k_n$ be  the number of bits we allow for encoding  $n$ symbols. The asymptotic compression rate   is $h= \liminf k_n/n$. What is the least $h$ we can achieve?

 Shannon's  \emph{source coding theorem} says that $h$ is the entropy of the probability distribution: $-\sum_{i} p_i \log p_i$. As we encode symbols strings by bit strings, the log is taken in base $2$. 
 
 To prove that $h$ is an upper bound, for given $\epsilon$ one considers the set $A_{n, \epsilon} $ of $\epsilon$-typical strings, namely those $u \in \mathbb A^n$ such that $\log \mathbb P[u]$ (where $\mathbb P[u]$ denotes the probability that $u$ happens) is within $\epsilon$ of $hn$. One shows that   the  probability of $A_{n, \epsilon} $ goes to $1$ as $n \to \infty$, and that the size of $A_{n, \epsilon} $ is at most $\tp{n(h+ \epsilon)}$. So we need at most $n(h+ \epsilon)$ bits to encode such a string. For $\epsilon \to 0$ we need $hn$ bits. 
 
  \subsection{The Shannon-McMillan-Breiman theorem}
  The SMB theorem generalises the above to the case that the r.v.'s $X_n$ ($n \in \NN $) form an ergodic process, to be defined below.   It says that the entropy of the joint distribution can be seen from almost every trajectory $\omega$ as the limit of the empirical entropy $h_n(\omega)$, a random variable (r.v.)  defined below. 
  
  The theorem   is based on three separate papers: Claude Shannon  1948, Brockway McMillan 1953, Leo Breiman 1957.  The former two worked in the spirit of information theory. Shannon only did the case of a Markov process (which includes the i.i.d. case).  McMillan's paper is long and follows the notation and terminology introduced by Shannon.  He obtained the stronger $L_1 $ convergence.  Breiman's paper is a short addition using an important inequality of McMillan's; he shows a.e.\ convergence of the r.v.'s $h_n$ defined below. (There is also an erratum because some calculation on the last page wasn't right.)  
  
  We follow Shields' book \cite{Shields:96} in the exposition of the theorem, though we adapt some notation.
    $\mathbb A^\infty$ denotes the space one-sided infinite sequences of symbols in $A$. We can assume that this is the sample space, so that $X_n(\omega) = \omega(n)$.  By $\mu $ we denote their joint distribution.  A dynamics on $\mathbb A^\infty$  is given by the shift operator $T$, which erases the first symbol of   a sequence.
  A measure $\mu$ on $A^\infty$ is \emph{$T$-invariant} if $\mu G = \mu T^{-1}(G)$ for each measurable $G$. 
  
  By $\omega \uhr n$ we denote the first $n$ symbols of $\omega$. Note that in ergodic theory one usual starts with $1$ as an index, so they would write $\omega_1^n$ for the first $n$ symbols. Here the notation should be compatible with the one used in randomness theory.) 
  
  We consider the r.v.  \[ h_n(\omega ) = -\frac 1 n \log \mu [\omega\uhr n],\]  (recall that  $\log$ is w.r.t.\ base $2$). 
The main thing to prove is the following fact for general $T$-invariant measures. 
\begin{lemma} \label{lem: limit exists} Let $\mu $ be an     invariant measure for the shift operator $T$ on the space $A^\infty$.  Then for  $\mu$-a.e. $x$,
$h(x) = \lim_n h_n(x\uhr n)$ exists. \end{lemma}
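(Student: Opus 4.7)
The plan is to follow the Breiman-style proof, rewriting $-\log \mu[\omega\uhr n]$ as a Birkhoff sum of conditional log-probabilities and combining the pointwise ergodic theorem with a martingale convergence argument. First I would pass to the two-sided natural extension of $(A^\infty, T, \mu)$, which preserves $T$-invariance and all $\mu$-a.e.\ statements and gives access to the full past. On this extension, for each $k \ge 0$ let
\[ f_k(\omega) = -\log \mu\bigl[\omega(0) \mid \omega(-1), \ldots, \omega(-k)\bigr] \quad\text{and}\quad f(\omega) = -\log \mu\bigl[\omega(0) \mid \omega(-1), \omega(-2), \ldots\bigr]. \]
The chain rule for conditional probabilities, together with $T$-invariance, then gives
\[ -\log \mu[\omega\uhr n] \,=\, \sum_{k=0}^{n-1} f_k(T^k\omega), \quad\text{so}\quad h_n(\omega) = \tfrac 1 n \sum_{k=0}^{n-1} f_k(T^k\omega). \]

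Next I would control the integrands. By L\'evy's upward martingale convergence theorem applied to $\mu[\omega(0) = a \mid \mathcal F_k]$ with $\mathcal F_k = \sigma(\omega(-1), \ldots, \omega(-k))$, one gets $f_k \to f$ $\mu$-a.s. The key technical ingredient is McMillan's maximal inequality: $\sup_k f_k \in L^1(\mu)$. This is the main obstacle; I would establish it by showing, for each letter $a \in \mathbb A$, that $\sup_k \bigl(-\log \mu[\omega(0) = a \mid \mathcal F_k]\bigr)$, integrated over $\{\omega : \omega(0) = a\}$, is bounded, via a disjointification argument over the level sets $\{\omega : \omega(0) = a,\ f_k(\omega) > t,\ f_j(\omega) \le t \text{ for } j < k\}$.

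Granting integrability, $f \in L^1$ and $F_N := \sup_{k \ge N} |f_k - f|$ is a decreasing sequence dominated by $\sup_k f_k + f \in L^1$ and tending to $0$ a.e.; hence $\mathbb E F_N \to 0$ by dominated convergence. To finish, split the error as
\[ \tfrac 1 n \sum_{k=0}^{n-1} |f_k - f|(T^k\omega) \,\le\, \tfrac 1 n \sum_{k < N} (f_k + f)(T^k\omega) \,+\, \tfrac 1 n \sum_{k=N}^{n-1} F_N(T^k\omega). \]
For each fixed $N$, as $n \to \infty$ the first term vanishes, and Birkhoff's ergodic theorem applied to $F_N$ shows the second converges a.s.\ to $\mathbb E[F_N \mid \mathcal I]$, where $\mathcal I$ is the $T$-invariant $\sigma$-algebra; letting $N \to \infty$ drives this bound to zero by monotonicity of $F_N$. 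Since $\tfrac 1 n \sum_{k < n} f \circ T^k$ converges $\mu$-a.s.\ by Birkhoff, the above comparison shows that $h_n(\omega) = \tfrac 1 n \sum_{k=0}^{n-1} f_k(T^k\omega)$ converges as well, which is what was to be shown.
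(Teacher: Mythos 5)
The paper states this lemma without giving a proof: the section is expository, deferring to Shields' book for the argument (Shields himself establishes the SMB theorem by a packing/counting route rather than the martingale route you take). Your proposal correctly reproduces Breiman's original proof, and it works. The telescoping identity $-\log\mu[\omega\uhr n]=\sum_{k<n}f_k(T^k\omega)$ uses $T$-invariance on the two-sided natural extension to identify $\mu[\omega(k)\mid\omega(0),\dots,\omega(k-1)]$ with $f_k\circ T^k$, and the passage to the extension is harmless since the $h_n$ only see nonnegative coordinates and the factor map pushes $\tilde\mu$ to $\mu$. L\'evy's upward theorem gives $f_k\to f$ a.s., and you rightly isolate McMillan's $L^1$-domination $\sup_k f_k\in L^1(\mu)$ as the crux; the disjointification over first-passage sets $\{f_k>t,\ f_j\le t \text{ for } j<k\}$, each $\mathcal F_k$-measurable on $\{\omega(0)=a\}$, yields $\mu[\omega(0)=a,\ \sup_k f_k>t]\le\min\bigl(\mu[\omega(0)=a],2^{-t}\bigr)$, whose integral in $t$ is finite, as required. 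The final step, peeling off a fixed finite head (which vanishes pointwise since $f_k,f\in L^1$) and applying Birkhoff to the dominated decreasing tails $F_N$, then sending $N\to\infty$ so that $\mathbb E[F_N\mid\mathcal I]\downarrow 0$ a.s., is exactly Maker's ergodic lemma and closes the argument; the limit you obtain is $\mathbb E[f\mid\mathcal I]$, which is the invariant function $h$ that the subsequent discussion in the paper then shows equals $H(\mu)$ a.e.\ when $\mu$ is ergodic.
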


Recall that $\mu$ is \emph{ergodic} if   every  $\mu$ integrable function $f $ with $f \circ T = f$  is constant $\mu$-a.s.  
An equivalent  condition that is easier to check is the following: for  $u,v \in \mathbb A^*$,  
\[ \lim_N \frac 1 N \sum_{k=0}^{n-1} \mu ([u] \cap T^{-k}[v]) = \mu [u] \mu [v]. \]
(Don't confuse this with the stronger property called ``weakly mixing", where one requires   that the average of the absolute  values of the differences goes to 0; this happened during  the talk.) It is easily seen that the Bernoulli measure on $\mathbb A ^\infty$ is ergodic (and in fact, strongly mixing).

For ergodic $\mu$, the entropy $H(\mu)$ is defined as $\lim_n H_n(\mu)$, where \[H_n(\mu) =  -\frac 1 n \sum_{|w| = n}  \mu [w] \log \mu [w].\]
One notes that $H_{n+1}(\mu) \le H_n(\mu)\le 1$ so that the limit exists. Also note that $H_n(\mu) = \mathbb E h_n$.

The following says that in the ergodic case, $\mu$-a.s.\ the empirical  entropy equals the entropy of the measure.

\begin{thm}[SMB   theorem] Let  $\mu $ be an  ergodic    invariant measure for the shift operator $T$ on the space $A^\infty$. 
Then for $\mu$-a.e.\  $\omega$   we have  $\lim_n h_n(\omega) = H(\mu)$.
\end{thm}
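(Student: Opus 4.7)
The plan is to exploit \Cref{lem: limit exists}, which already guarantees $\mu$-a.e.\ existence of $h(\omega):=\lim_n h_n(\omega)$, and to identify $h$ with the constant $H(\mu)$. The argument proceeds in three steps: $T$-invariance of $h$, ergodicity to reduce $h$ to a constant $c$, and then identification of $c$ with $H(\mu)$ via an expectation argument.

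For $T$-invariance I would compare $h_{n+1}(\omega)$ with $h_n(T\omega)$. The inclusion $[\omega \uhr{n+1}] \subseteq T^{-1}[(T\omega)\uhr n]$ together with $T$-invariance of $\mu$ gives $\mu[\omega\uhr{n+1}] \le \mu[(T\omega)\uhr n]$; taking $-\log$ and dividing by $n+1$ yields $h_{n+1}(\omega) \ge \tfrac{n}{n+1}\, h_n(T\omega)$, so $h \ge h \circ T$ $\mu$-a.e. Since $\mu$ is $T$-invariant, $h$ and $h \circ T$ are identically distributed; and two identically distributed random variables related by $\ge$ must coincide a.s.\ (compare cumulative distribution functions at each rational). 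So $h = h \circ T$ $\mu$-a.e., and ergodicity gives $h \equiv c$ for some $c \in [0,\infty]$. The inequality $c \le H(\mu)$ is then immediate from Fatou applied to the nonnegative sequence $h_n$: $c = \int \liminf h_n\,d\mu \le \liminf \int h_n\,d\mu = \lim H_n(\mu) = H(\mu) < \infty$, so in particular $c$ is finite.

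The main obstacle is the reverse direction $c \ge H(\mu)$, for which I would use an asymptotic-equipartition argument. Fix $\epsilon > 0$ and let $A_{n,\epsilon} = \{\omega : |h_n(\omega) - c| < \epsilon\}$, which is a union of length-$n$ cylinders $[w]$ on each of which $\tp{-n(c+\epsilon)} \le \mu[w] \le \tp{-n(c-\epsilon)}$. By $\mu$-a.e.\ convergence, $\delta_n := 1 - \mu(A_{n,\epsilon}) \to 0$. Split the sum defining $H_n(\mu)$ into the typical words $W_n$ (those with cylinder inside $A_{n,\epsilon}$) and the atypical words $B_n = \mathbb A^n \setminus W_n$: the typical contribution is at most $\mu(A_{n,\epsilon})(c+\epsilon) \le c+\epsilon$, while the atypical contribution to the unscaled sum $-\sum_{w\in B_n}\mu[w]\log\mu[w]$ is bounded by $-\delta_n \log \delta_n + \delta_n \log|B_n|$ (standard consequence of concavity of $-x\log x$); since $|B_n| \le |\mathbb A|^n$, dividing by $n$ makes both summands $o(1)$. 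Thus $H_n(\mu) \le c + \epsilon + o(1)$, and letting $n \to \infty$ then $\epsilon \to 0$ gives $H(\mu) \le c$, closing the argument.
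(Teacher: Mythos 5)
Your proof follows the same high-level outline as the paper's sketch: use \Cref{lem: limit exists} for a.e.\ existence of $h$, show subinvariance $h\circ T \le h$, upgrade to invariance, invoke ergodicity for constancy, then identify the constant with $H(\mu)$. The one place you genuinely diverge is the step from subinvariance to invariance: the paper uses the Poincar\'e recurrence theorem (the set $\{x : h(Tx) < q < h(x)\}$ is null since one cannot return to it), whereas you use the observation that $h$ and $h\circ T$ are identically distributed under the $T$-invariant measure, and a pointwise inequality between identically distributed random variables forces a.s.\ equality (CDF comparison at rationals). Both are standard and valid; yours is arguably a bit more self-contained as it needs no dynamics beyond invariance of $\mu$. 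You also fully supply the final step (``constant equals $H(\mu)$'') that the paper only states as ``a final step is then to show\ldots''; your Fatou argument for $c \le H(\mu)$, combined with the typical/atypical split and the entropy bound $-\sum_{w\in B_n}\mu[w]\log\mu[w]\le -\delta_n\log\delta_n + \delta_n\log|B_n|$ for $c\ge H(\mu)$, is the standard AEP completion and is correct. One very minor point: the paper's stated definition of ergodicity quantifies over integrable invariant functions, which is why the paper notes $h\in L^1(\mu)$ before applying it; your Fatou step actually gives $\int h\,d\mu \le H(\mu)<\infty$, so the same conclusion holds, but it would be cleaner to invoke this (or the equivalent characterization that measurable invariant sets have measure $0$ or $1$) explicitly before concluding $h\equiv c$.
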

Given Lemma~\ref{lem: limit exists},  this isn't too much extra work to prove. First one checks that since $\mu$ is $T$-invariant, we have $h(Tx) \le h(x)$ for each $x$, i.e.\ $h$ is subinvariant. Next, from  the Poincare recurrence theorem it follows that  $B= \{ x \colon h(Tx) < q <  h(x)\} $ is a null set for each $q$ (because we can't return to $B$ outside a null set), so $h$ is actually invariant: $h(Tx) = h(x)$ for $\mu$-a.e.\ $x$. Also $h \in L^1(\mu)$ by the dominated convergence theorem, so if $\mu$ is ergodic  then $h(x) $ has  some constant value, for $\mu$-a.s. $x$. 
A final step is then to show that this constant value equals $H(\mu)$. 

%

\subsection{Proof in the i.i.d. case} It is instructive to give a direct proof of the SMB theorem in the i.i.d. case, that is, when $\mu$ is a Bernoulli measure. 
 Suppose symbol $a_i$ has probability $p_i$. By $k_{i,n}(\omega)$ we denote the number of occurences of the symbol $a_i$ in $\omega\uhr n$. By independence,  we have 
 
 \[ h_n(\omega) = - \frac 1 n \log \prod_i   p_i^{k_{i,n}(\omega)} = - \frac 1 n \sum_i k_{i,n}(\omega) \log p_{i}.\]
 By the strong law of large numbers, for $\mu$-a.e.\ $\omega$, $k_{i,n}(\omega) /n$ converges to $p_i$.

\subsection{Algorithmic version of the SMB theorem}
 We now assume that we can compute $\mu [u]$ uniformly from a string $u$. That is, $\mu$ is a computable measure. This is true e.g. for the Bernoulli measure when the $p_i$ are all computable reals. 
 
 Hochman \cite{Hochman:09} and in more explicit form Hoyrup \cite{Hoyrup:12} have shown that the exception set in the SMB theorem is ML-null.  It is unknown at present whether a weaker randomness notion such as Schnorr's is sufficient, even  under the assumption that  $H(\mu)$ is computable.

%
%
%
%

\subsection{Do random states satisfy the  quantum  SMB theorem?} 

Mathematically, a qubit is a  unit vector in the Hilbert space $\mathbb C^2$. We  give  a brief summary  on   ``infinite  sequences" of qubits. One considers the $C^*$   algebra $M_\infty =  \lim_n M_{2^n}(\mathbb C)$, an  approximately finite  (AF) $C^*$ algebra. ``Quantum Cantor space" consists of the state set $\+ S(M_\infty)$, which is a convex, compact, connected set with a shift operator, deleting the first  qubit. 

Given a finite  sequence of qubits, ``deleting"  a particular one generally  results in a statistical superposition of the remaining ones. This is is why $\+ S(M_\infty)$ consists of coherent sequences of density matrices in $M_{2^n}(\mathbb C)$ (which formalise such superpositions) rather than just of sequences of unit vectors in $(\mathbb C^2)^{\otimes n}$. For more background on this, as well as  an algorithmic  notion of randomness   for such states,  see Nies and Scholz \cite{Nies.Scholz:18}. Notice that the tracial state $\tau$ is random, even though it generalises the uniform measure and hence, from a different point of view,  can be  considered to be  computable. 

Bjelakovich et al. \cite{Bjelakovic.etal:04} provided a quantum version of the Shannon-McMillan theorem. (They worked with bi-infinite sequences, which makes little difference here, as a stationary process is given by its marginal distributions on the places from $0$ to $n$, for all $n$.)
The reason  they avoided the full  Breiman version  is that on $\+ S(M_\infty)$ there has been so far no reasonable way to say  ``for almost every". (The work in \cite{Nies.Scholz:18} introduces effective null sets, which might   remedy this.)  In  \cite{Bjelakovic.etal:04}, they first convert the classical SMB theorem  into an equivalent form  which doesn't directly mention measure; rather, they have ``chained typical sets" which generalise Shannon's typical sets.  To be chained means that  they are coherent over successive  lengths of symbol strings. 

The von Neumann entropy of a density matrix $S$ is $H(S) = - \tr (S \log S)$. For a state $\mu$ on $M_\infty$ we let 

\[ h(\mu) =  \lim \frac  1 n H(\mu \uhr {M_{\tp{n}}})\]
which exists by concavity of $\log$. 
Let $\mu $ be a  state on $M_\infty$. For a quantum $\SI 1$ set $G= \seq {p_n}\sN n$ we define $\mu(G) = \sup_n \tr ( \mu\uhr  {M_{\tp{n}} } p_n)$. A qML-test relative to a computable state  $\mu$ is a uniform sequence $(G_r)$ of such sets such that $\mu(G_r) \le \tp{-r}$. Failure and passing is defined as before. This yields qML-randumness w.r.t.\ $\mu$.  Work in progress with Tomamichel would show the following.

\begin{conjecture} Let $\mu$ be an ergodic computable state on $M_\infty$. Let $\rho$ be a state that is quantum ML-random   with respect to $\mu$. Then

\[   h(\mu) = -  \lim \frac 1 n\tr (\rho \uhr  {M_{\tp{n}}}  \log  \mu \uhr  {M_{\tp{n}}}). \] \end{conjecture}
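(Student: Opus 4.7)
The plan is to adapt the Hochman--Hoyrup effectivization of the classical SMB theorem to the quantum setting, using the chained typical projections of Bjelakovic et al.\ \cite{Bjelakovic.etal:04} in place of Shields' typical sets. Write $\mu_n = \mu \uhr \Malg{n}$ and $\rho_n = \rho \uhr \Malg{n}$, and choose the typical projections $p_n^\epsilon$ introduced below to be spectral projections of $\mu_n$, so that $p_n^\epsilon$ commutes with $\log \mu_n$.

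First I would extract from \cite{Bjelakovic.etal:04} an effective form of the quantum Shannon--McMillan theorem: for every rational $\epsilon > 0$ there is a uniformly computable family of spectral projections $p_n^\epsilon \in \Malg{n}$ such that on the range of $p_n^\epsilon$ the spectrum of $\mu_n$ lies in $[\tp{-n(h(\mu)+\epsilon)}, \tp{-n(h(\mu)-\epsilon)}]$, and there is a computable function $N(\epsilon, r)$ with $\tr(\mu_n(\mathbf{1} - p_n^\epsilon)) \le \tp{-r}$ for all $n \ge N(\epsilon, r)$. This requires revisiting the non-commutative large-deviations arguments of \cite{Bjelakovic.etal:04} and verifying that all constants depend computably on $\mu$ and $\epsilon$; I expect this to be the main technical step, as making the chaining condition quantitatively effective in the non-commutative setting is subtler than the classical intersection of Shields-type typical sets.

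Second, for each rational $\epsilon$ and integer $r$ I would package the projections $\mathbf{1} - p_n^\epsilon$ for $n \ge N(\epsilon, r)$ (and the zero projection for smaller $n$) into a quantum $\SI 1$ set $G_{\epsilon, r}$, for which $\mu(G_{\epsilon, r}) = \sup_n \tr(\mu_n(\mathbf{1}-p_n^\epsilon)) \le \tp{-r}$ by Step~1. Reindexing the pairs $(\epsilon, r)$ by a single parameter gives a qML-test with respect to $\mu$, which $\rho$ passes by randomness: for every rational $\epsilon > 0$ there is $N$ with $\tr(\rho_n(\mathbf{1} - p_n^\epsilon)) < \epsilon$ for all $n \ge N$.

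Finally, split $-\tfrac{1}{n}\tr(\rho_n \log \mu_n)$ along $p_n^\epsilon$ and its complement, which is legitimate because $p_n^\epsilon$ commutes with $\log\mu_n$. The contribution from the typical part is within $\epsilon$ of $h(\mu)\,\tr(\rho_n p_n^\epsilon) = h(\mu)(1 - o(1))$ by the spectral bounds from Step~1. The complementary contribution is where the real work lies: $-\tfrac{1}{n}\log \mu_n$ can have arbitrarily large eigenvalues on the near-singular part of $\mu_n$, so the crude bound by $\tr(\rho_n(\mathbf{1}-p_n^\epsilon))$ is insufficient. To control this I would introduce auxiliary tests $G'_{\epsilon, k, r}$ built from the spectral projections of $-\tfrac{1}{n}\log \mu_n$ onto eigenvalues exceeding $h(\mu) + k\epsilon$; by the upper spectral bound obtained in Step~1 these carry $\mu$-mass geometrically small in $k$, and qML-randomness of $\rho$ forces the contribution of deep spectral tails to be $o(1)$ in $n$. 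This is the quantum analog of the Solovay-test trick in the classical proof. Combining the three estimates then yields the claimed convergence.
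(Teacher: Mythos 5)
The statement you are addressing is recorded in the paper only as a \textbf{conjecture} (``work in progress with Tomamichel''); the paper does not prove it. What the paper actually proves, in the theorem immediately following the conjecture, is the special case where $\mu$ is i.i.d.: after a computable unitary one may take $V$ diagonal, $-\log\mu[x]$ becomes a sum of $n$ i.i.d.\ classical random variables, and a classical Chernoff bound supplies the qML-test. The i.i.d.\ hypothesis is then used once more, decisively, to bound the atypical contribution: $\|\tfrac1n\log\mu_n\|_\infty$ is uniformly bounded by $-\log\min(p,1-p)$, so Part~1 of the paper's proof is dispatched by $\|P_n^\perp\rho_n P_n^\perp\|_1 \cdot \|\tfrac1n\log\mu_n\|_\infty \le 2\delta\cdot O(1)$. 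You rightly observe that this cheap bound cannot survive the passage to a general ergodic $\mu$, and your Step~3 idea of auxiliary tests indexed by spectral tails of $-\tfrac1n\log\mu_n$ is a sensible replacement; you have correctly located where the additional difficulty lies.

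However, your proposal is a plan rather than a proof, and the plan leaves exactly the hard steps open. Step~1 --- a uniformly computable family of spectral typical projections $p_n^\epsilon$ for an arbitrary ergodic computable state, together with a computable escape rate $N(\epsilon,r)$ --- is precisely what would need to be proved; the argument in \cite{Bjelakovic.etal:04} goes through the non-commutative subadditive ergodic theorem and a chaining construction whose quantitative dependence on $\mu$ is not obviously effective, and making it effective is the quantum counterpart of the Hochman--Hoyrup effectivisation, which even classically required a nontrivial upcrossing argument. The tail estimate in Step~3 is also incomplete as stated: the claimed geometric-in-$k$ decay of the $\mu$-mass on eigenvalues of $-\tfrac1n\log\mu_n$ exceeding $h(\mu)+k\epsilon$ does not follow from the spectral bound on the range of $p_n^\epsilon$, which constrains only the typical subspace; writing $Q_n$ for the spectral projection of $\mu_n$ below $2^{-na}$ with $a=h(\mu)+k\epsilon$, the rank bound $\tr(\mu_n Q_n)\le 2^{n(1-a)}$ is vacuous whenever $a\le 1$, and the only control available in that window is again the (yet to be established) effective typicality of Step~1. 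So the proposal correctly isolates the structure of the problem, but the two pieces you defer --- effectivity of the quantum Shannon--McMillan theorem and the tail control --- are precisely where the conjecture lives, which is why the paper states it as a conjecture rather than a theorem.
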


The plan is to go through  more and more general cases for both $\rho$ and $\mu$.  The computable state $\mu$ can be uniform (ie $\tau$), i.i.d. but quantum, a computable ergodic measure, and finally any computable ergodic state. The random state $\rho$ can be a bit sequence that is ML random wrt $\mu$, a $\mu$-random measure on $\tp{\NN}$, and finally any qML($\mu$) state.  The combination that  $\rho$ is a bit sequence, and $\mu $ a measure is the effective classical SMB theorem, essntially proved by  Hochman \cite{Hochman:09} and in more explicit form by Hoyrup \cite{Hoyrup:12}. 

In the classical setting the  case where $\mu$ is  a Bernoulli measure is  easy. In the quantum setting we use Chernoff bounds and some calculations to do the case for general $\rho$   but Bernoulli $\mu$. 

  To say that $\mu$ is i.i.d. means that for some fixed computable $V \in S(M_2)$,  i.e. a 2x2 density matrix,  we have $\mu\uhr  {M_{\tp{n}}} = V ^{\otimes n}$.  Note that the partial trace removes the final $V$, so this ``infinite tensor power" indeed can be seen as  a computable state on $M_\infty$. There is a computable unitary $U \in M_2$ such that $U V U^\dagger$ is diagonal, with $p$, $1-p$ on the diagonal, $p$ is computable. Its von Neumann entropy is $h(\mu) = - p \log p - (1-p) \log (1-p)$.  

Note that  qML($\mu$)-randomness is closed under the unitary of $\Malg \infty$ which is obtained applying  conjugation by $U^\dagger$ ``qubit-wise". So replacing $\rho$ by its conjugate we may as well assume that $V$ is diagonal. Fix $\delta >0$. Let $P_{n, \delta}$ be the projector in $\Malg  n$ corresponding to the set of bitstrings

\bc $\{ x \colon |x|  = n \land | - \frac 1 n \log \mu[x] - h(\mu)| \le \delta \}
$.  \ec   

Since $\mu$ is a product measure, $\log \mu[x] $ is a sum of $n$  independent random variables looking at the bits of $x$, and the expectation of $- \frac 1 n \log \mu[x]$ is $h(\mu)$. The usual Chernoff bound yields $\mu (P_{n, \delta}^\perp) \le 2 \exp(-2n \delta^2)$. Let $G_{m, \delta} = \bigcup_{n>m}  P^\perp_{n,\delta}$ where these projectors are now viewed as clopen sets in Cantor space, so that $G_{m, \delta}$ determines a classical ML-test. Since $\rho$ is qML random w.r.t.\ $\mu$, we have $\lim_m \rho(G_{m, \delta}) =0$. 

\begin{thm} $\lim_n  -\frac 1 n \tr(\rho \uhr {\Malg n} \log \mu \uhr {\Malg n}) = h(\mu)$. \end{thm}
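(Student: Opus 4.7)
The plan is to reduce the trace to a classical sum indexed by bit strings, and then split the sum according to the typical set $P_{n,\delta}$ already in hand. Because we have conjugated so that $V$ is diagonal, $\mu \uhr {\Malg{n}} = V^{\otimes n}$ is diagonal in the computational basis with entries $\mu[x]$ for $|x|=n$, and hence $\log \mu\uhr {\Malg n}$ is diagonal with entries $\log \mu[x]$. Writing $\tilde\rho_n[x] := \langle x|\rho\uhr{\Malg n}|x\rangle$ for the diagonal entries of $\rho \uhr {\Malg n}$, these form a consistent sequence of probability distributions (by the partial trace relation), and
\[
-\tfrac{1}{n}\tr\bigl(\rho\uhr{\Malg n}\log\mu\uhr{\Malg n}\bigr) \;=\; \sum_{|x|=n} \tilde\rho_n[x]\,\bigl(-\tfrac{1}{n}\log\mu[x]\bigr).
\]

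Next I would split this sum into the contribution from $P_{n,\delta}$ and from $P_{n,\delta}^\perp$, where $P_{n,\delta}$ is the diagonal projector onto the $\delta$-typical strings defined just before the theorem. On the typical part, every $x$ with $P_{n,\delta}|x\rangle = |x\rangle$ satisfies $|-\tfrac{1}{n}\log\mu[x]-h(\mu)|\le\delta$, and so this piece lies in the interval $[(h(\mu)-\delta)\tilde\rho_n(P_{n,\delta}),\,(h(\mu)+\delta)\tilde\rho_n(P_{n,\delta})]$. On the atypical part, assuming $0<p<1$ (the degenerate case being trivial), for every $x$ with $|x|=n$ the quantity $-\tfrac{1}{n}\log\mu[x]$ is at most the uniform constant $C := \max(-\log p,\,-\log(1-p))$, so this piece is bounded above by $C\cdot\tilde\rho_n(P_{n,\delta}^\perp)$.

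It remains to show $\tilde\rho_n(P_{n,\delta}^\perp)\to 0$ as $n\to\infty$. Since $P_{n,\delta}^\perp$ is diagonal, $\tilde\rho_n(P_{n,\delta}^\perp)=\tr(\rho\uhr{\Malg n}P_{n,\delta}^\perp)$, which is exactly the $\rho$-measure of the clopen set corresponding to $P_{n,\delta}^\perp$; hence for each $m<n$ we have $\tilde\rho_n(P_{n,\delta}^\perp)\le\rho(G_{m,\delta})$. The hypothesis that $\rho$ is qML-random w.r.t.\ $\mu$, together with the fact that $(G_{m,\delta})_m$ is a classical ML-test (via the Chernoff bound stated above), gives $\lim_m\rho(G_{m,\delta})=0$, and $\tilde\rho_n(P_{n,\delta}^\perp)\to 0$ follows.

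Combining the two pieces, for every $\delta>0$ we get
\[
\limsup_n \bigl|-\tfrac{1}{n}\tr(\rho\uhr{\Malg n}\log\mu\uhr{\Malg n}) - h(\mu)\bigr| \;\le\; \delta,
\]
and letting $\delta\to 0$ yields the theorem. The main obstacle here is the verification that qML$(\mu)$-randomness really does imply $\tilde\rho_n(P_{n,\delta}^\perp)\to 0$; this is cleanly handled by the diagonality of the test projectors $P_{n,\delta}^\perp$, which reduces the quantum statement to a classical one for the induced measure $\tilde\rho$. Any attempt to extend the argument beyond the diagonal (Bernoulli) case for $\mu$ will have to replace this step with a genuinely noncommutative estimate, e.g.\ via quantum Chernoff bounds for the spectral projectors of $\log\mu\uhr{\Malg n}$.
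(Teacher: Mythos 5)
Your proof is correct and follows essentially the same route as the paper: both insert the resolution $P_{n,\delta}+P_{n,\delta}^\perp$ and bound the two pieces via the typicality of $P_{n,\delta}$ and the Chernoff bound feeding into the test $(G_{m,\delta})_m$. The only cosmetic difference is that you reduce the trace to a classical sum over bitstrings at the outset by exploiting diagonality, whereas the paper carries out the same estimates in operator language (cyclicity of trace and $\tr(AB)\le \|A\|_1\|B\|_\infty$), which amounts to the same calculation in this diagonal setting.
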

To see this, fix $\delta > 0$,  and omit it from the subscripts for now. We write  $s= h(\mu)$ and write $\rho_n$ for $\rho \uhr {\Malg n}$ etc.

We insert the term $I_{2^n}= P_n^\perp +  P_n$ between the two factors. We  look  separately at both resulting limits. 

\n \emph{Part  1.} We consider $-\frac 1 n \tr(\rho_n P_n^\perp \log \mu_n)$. Note that $P_n^\perp \log \mu_n$ is negative semidefinite as the two factors  are diagonal w.r.t.\ the same base and therefore commute. So $ - \frac 1 n \tr (\rho_n P_n^\perp \log \mu_n) \ge 0$. By cyclicity of the trace and the commutation, we have 
\bc $\tr (\rho\uhr n P_n^\perp \log \mu_n )= \tr (\rho\uhr n P_n^\perp \log \mu_n P_n^\perp ) =  \tr (P_n^\perp \rho_n P^\perp_n \log \mu_n )$. \ec For positive operators $A,B$ we have $\tr (AB) \le  ||A||_1 \cdot ||B||_\infty$ where $||A||_1 $ is the sum of the      eigenvalues, and $||B||_\infty$ is their maximum.  So 

\bc $ - \frac 1 n \tr (\rho_n P_n^\perp \log \mu_n) \le \frac 1 n ||P_n^\perp \rho_n P_n^\perp ||_1 \log \mu_n ||_\infty$. \ec
Now $ ||\frac 1 n \log \mu_n ||_\infty$ is bounded depending only on $p$, and for large enough $n$  we have  $||P_n^\perp \rho_n P_n^\perp||_1 \le 2 \delta$ by hypothesis. 

\vsp\n \emph{Part 2.}  We consider $-\frac 1 n \tr(\rho_n P_n \log \mu_n)$. We note that $\mu_n$ is a diagonal matrix in $\Malg n$ where the entry in the position $(\sss, \sss)$ is $p^k (1-p)^{n-k}$, where the binary string $\sss$ of length $n$ has $k$ 0s. By definition of $P_n$ it follows that $||P_n (- \frac 1 n \log \mu_n) - h(\mu) P_n ||_\infty \le \delta$. Now 

\begin{eqnarray*} - \frac 1 n \tr (\rho_n P_n \log \mu_n) &=& \tr \rho_n (- \frac 1 n P_n \log \mu_n - sP_n + sP_n)  \\
& = & s \tr \rho_n P_n + \tr (\rho_n (- \frac 1 n P_n  \log \mu_n - sP_n)). \end{eqnarray*}

Using that $\tr (AB) \le  ||A||_1 \cdot ||B||_\infty$ for positive $A, B$ and the definition of $P_n$, the  second   summand is   at most $\delta$.

By hypothesis,  for large $n$ we have $\tr ( \rho_n P^\perp_n) \le  2 \delta$, and hence $\tr \rho_n P_n \ge 1- 2\delta$. So  the first summand is between  $s(1-2\delta)$ and   $s$. 

To summarize, for large $n$, the quantity in Part 1 is $\le 2\delta$ and the quantity in Part 2 is in $[s(1-2\delta), s+ \delta]$. For $\delta \to 0$ their sum converges to $s$ as required.

\subsection{Random states satisfy the  law of large numbers}
\begin{prop} Let $\mu$ be an i.i.d computable state,   and let $\rho$ be qML-random relative to $\mu$. For $i<n$ let $S_{n,i}$ be the subspace of $\mathbb C^{2^n}$ generated by those $\underline \sigma$ with $\sigma_i =1$. We have \bc $\lim_n \frac 1 n \sum_{i< n} \tr (\rho_n S_{n,i}) = p$ \ec where $S_{n,i}$ is identified with its orthogonal projection.  \end{prop}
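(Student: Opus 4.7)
The plan is to mirror, and simplify, the Chernoff-style argument used in the Shannon--McMillan--Breiman theorem above. First, since qML$(\mu)$-randomness is preserved under qubit-wise conjugation by the unitary $U^\dagger$ appearing in the preceding proof, I may replace $\rho$ by its conjugate and thereby assume that $V$ is already diagonal, say with entries $1-p$, $p$. Under this reduction, $\mu$ becomes the classical Bernoulli$(p)$ product measure in the computational basis, and each projection $S_{n,i}$ is diagonal: on a basis string $\sigma \in \{0,1\}^n$ it acts as the indicator of $\sigma_i = 1$. Consequently $K_n := \sum_{i<n} S_{n,i}$ is the diagonal observable whose eigenvalue on $|\sigma\rangle$ is $k(\sigma)$, the number of $1$'s in $\sigma$, and the quantity of interest equals $\tfrac{1}{n}\tr(\rho_n K_n)$.

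Next, I would set up a qML-test capturing deviation of the empirical frequency from $p$. For each rational $\epsilon>0$ and each $n$, let $Q_{n,\epsilon}$ be the projection onto the span of those basis strings $\sigma$ with $|k(\sigma)/n - p|>\epsilon$. Since $\mu$ is Bernoulli$(p)$, the classical Chernoff bound gives $\mu(Q_{n,\epsilon}) \le 2\exp(-2n\epsilon^2)$, which is summable in $n$. Setting $G_{m,\epsilon}=\bigcup_{n>m} Q_{n,\epsilon}$ and viewing each $Q_{n,\epsilon}$ as a clopen subset of Cantor space (as in the preceding proof), the family $(G_{m,\epsilon})$, indexed uniformly in a computable rational $\epsilon$ and level $m$, assembles into a qML-test for $\mu$. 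Since $\rho$ is qML$(\mu)$-random, it passes this test, so for each fixed $\epsilon$ we obtain $\tr(\rho_n Q_{n,\epsilon}) \to 0$.

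The last step is to decompose $\tfrac{1}{n}\tr(\rho_n K_n)$ by inserting $I=Q_{n,\epsilon}^\perp+Q_{n,\epsilon}$, in direct analogy with Part 2 of the preceding theorem. Because $K_n/n$ commutes with $Q_{n,\epsilon}$ and has eigenvalues within $\epsilon$ of $p$ on the range of $Q_{n,\epsilon}^\perp$, cyclicity of the trace yields
\[
\left|\tfrac{1}{n}\tr\!\left(\rho_n Q_{n,\epsilon}^\perp K_n\right) - p\,\tr\!\left(\rho_n Q_{n,\epsilon}^\perp\right)\right| \le \epsilon.
\]
For the complementary summand, the inequality $\tr(AB) \le \|A\|_1\|B\|_\infty$ for positive $A,B$ (applied to $A=Q_{n,\epsilon}\rho_n Q_{n,\epsilon}$ and $B=K_n/n$, which has $\|B\|_\infty=1$) yields $\tfrac{1}{n}\tr(\rho_n Q_{n,\epsilon}K_n) \le \tr(\rho_n Q_{n,\epsilon})$, which tends to $0$. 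Combining the two bounds, $\tfrac{1}{n}\tr(\rho_n K_n)$ lies in $[p-\epsilon-o(1),\,p+\epsilon+o(1)]$; letting first $n \to \infty$ and then $\epsilon \to 0$ gives the claimed limit $p$.

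The main obstacle I anticipate is the bookkeeping needed to verify that $(G_{m,\epsilon})$ really qualifies as a qML-test in the sense of Nies--Scholz: one must confirm uniform computability of the projections $Q_{n,\epsilon}$ in $n$ and rational $\epsilon$, and combine the indices $m$ and $\epsilon$ into a single effective sequence with $\mu$-measure summing to at most $\tp{-r}$ at each level $r$. Since the preceding proof already carries out exactly this bookkeeping for the entropy-typical projections $P_{n,\delta}^\perp$, the modifications should be routine.
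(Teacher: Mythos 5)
Your proof is correct and follows essentially the same route as the paper: reduce to the diagonal case by qubit-wise conjugation, build a qML-test from the Chernoff bound on the empirical frequency, and then split $\tfrac{1}{n}\tr(\rho_n K_n)$ into the typical and atypical parts, with the typical part pinned near $p$ and the atypical part bounded by the (vanishing) test mass. The only cosmetic difference is that you phrase the estimates via $\tr(AB)\le \|A\|_1\|B\|_\infty$ and operator norms, whereas the paper sums $\rho(S_{n,i}\cap[x])$ over basis strings $x$ and observes directly that this sum equals $(\sum x)\,\rho[x]$; the content is identical.
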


\begin{proof} As above,  we first assume that \[ \mu = \left (  \begin{matrix} p & 0 \\ 0  & 1-p \end{matrix} \right )^{\otimes \infty}  \]  corresponds to a classical product measure where $p\in (0,1)$ is computable.  As before we fix $\delta >0$ .  Let $E_{n, \delta}$ be (the projector in $\Malg  n$ corresponding to) the set of bitstrings

\bc $\{ x \colon |x|  = n \land | - \frac 1 n  \sum x  - p| \le \delta \}
$.  \ec    The Chernoff bound now yields $\mu (E_{n,\delta}^\perp) \le 2 \exp(-2n \delta^2)$

 Let $G_{m, \delta} = \bigcup_{n>m}  E^\perp_{n,\delta}$ where these projectors are now viewed as clopen sets in Cantor space, so that $G_{m, \delta}$ determines a classical ML-test. Since $\rho$ is qML random w.r.t.\ $\mu$, we have $\lim_m \rho(G_{m, \delta}) =0$. 
 Now,  
 $\frac 1 n \sum_{i< n} \rho(S_{n,i}) =$ 
 \bc $ \frac 1 n\sum_{x \in E_{n, \delta} }\sum_{i< n} \rho( S_{n,i }  \cap [x] )  + \frac 1 n\sum_{x \in E^\perp_{n, \delta}} \sum_{i< n} \rho( S_{n,i }  \cap [x] )$.  \ec

 For large enough $n$, the second summand is $\le 2 \delta$. The first summand equals $\frac 1 n \sum_{x \in E_{n, \delta}} (\sum x) \rho[x]$, where $\sum x$ is the number of 1s in a string $x$. By definition of $E_{n, \delta}$ this value is in $(p-\delta, p+ \delta) \rho(E_{n, \delta})$, and $ \rho(E_{n, \delta}	$ tends to $1$ with $n\to \infty$. Letting $\delta \to 0$ we get the value $p$.
 
 For general i.i.d.\ states  $\mu$, we note that the same argument works for other computable orthonormal  bases  $e_0, e_1$ of $\mathbb C^2$ instead of $| 0\ra, |1\ra$. If $\mu$ is the infinite tensor power of $ UB(p) U^\dagger$, we conjugate  $\rho$ qubitwise by   $U^\dagger$ and carry out the argument for $e_r = U |r\ra U^\dagger$.

\end{proof}

 
  \part{Reverse mathematics}
 \section{Carlucci: A variant of Hindman's Theorem implying $\mathsf{ACA}_0'$}

The following is part of an attempt to prove (or disprove) the existence of a level-by-level 
combinatorial reduction from Ramsey's Theorem to Hindman's Theorem. Such a reduction 
would be a strong way of establishing that Hindman's Theorem implies $\mathsf{ACA}_0'$.

Let us recall some definitions. 

\begin{definition}[Hindman's Theorem with bounded sums]
For positive integers $n,\ell$, $\mathsf{HT}^{\leq n}_\ell$ denotes the following principle: 
for every coloring $f:\mathbb{N}\rightarrow \ell$ there exists an infinite set $H$ 
such that $FS^{\leq n}(H)$ is monochromatic for $f$, where $FS^{\leq n}(H)$ denotes the 
set of all non-empty finite sums of at most $n$ distinct members of $H$. 
\end{definition}

The analogous version for sums of exactly $n$ many terms is denoted $\mathsf{HT}^{=n}_\ell$.

\begin{definition}[Apart set]
A subset $X$ of the positive integers is apart if for any $n,m\in X$ such that $n<m$, 
the largest exponent of $n$ in base 2 is strictly smaller than the smallest exponent of 
$m$ in base 2.
\end{definition}

If $P$ is an Hindman-type principle then $P$ {\em with apartness} denotes the same principle to which 
we add the requirement that the solution set is an apart set. 

Recently the following results where established, where $\mathsf{RT}^k_\ell$ denotes Ramsey's Theorem for 
exponent $k$ and $\ell$ colors, $\mathsf{IPT}^2_2$ denotes Dzhafarov and Hirst's \cite{Dzhafarov.Hirst:11}
Increasing Polarized Ramsey's Theorem for exponent $2$ and $2$ colors, and $\leq_{\mathrm{sc}}$ 
denotes strong computable reducibility.

\begin{enumerate}
\item For every positive integers $k,\ell$, $\mathsf{RCA}_0 \vdash \mathsf{HT}^{\leq 3}_3 \to \mathsf{RT}^k_\ell$ (Dzhafarov et al. \cite{Dzhafarov.Jockusch.etal:16}),
\item For every positive integers $k,\ell$, $\mathsf{RCA}_0 \vdash \mathsf{HT}^{\leq 2}_4 \to \mathsf{RT}^k_\ell$ (Carlucci et al. \cite{Carlucci.Kolodziejczyk.etal:arxiv}),
\item For every positive integers $k,\ell$, $\mathsf{RCA}_0 \vdash \mathsf{HT}^{= 3}_2 \mbox{ with apartness } \to \mathsf{RT}^k_\ell$ (Carlucci et al. \cite{Carlucci.Kolodziejczyk.etal:arxiv}),
\item $\mathsf{IPT}^2_2 \leq_{\mathrm{sc}} \mathsf{HT}^{\leq 2}_4$ (Carlucci \cite{Carlucci:arxiv}),
\item $\mathsf{IPT}^2_2 \leq_{\mathrm{sc}} \mathsf{HT}^{= 2}_2 \mbox{ with apartness}$ (Carlucci et al. \cite{Carlucci.Kolodziejczyk.etal:arxiv}).
\end{enumerate}

Despite points (1) and (2), I have not been able to lift the combinatorial reductions in points (3) and (4) 
to exponents higher than $2$. Below I show that it is possible to 
do so, so as to hit Ramsey's Theorem and not only its increasing polarized version, 
provided one adds an extra condition on the elements of the solution set to Hindman's Theorem. 
The resulting variant of Hindman's Theorem is then shown to be equivalent to Ramsey's Theorem.

\begin{definition}[Exactly large number]
A positive integer $n$ is $!\alpha$-large (exactly $\alpha$-large) if the set $e(n)$ of its exponents
in base 2 is $!\alpha$-large.
\end{definition}

Recall that a set $X$ of positive integers is $!\omega$-large if $|X|=\min(X)+1$, 
is exactly $\omega 2$-large is it has the form $X=X_1\cup X_2$ with $\max(X_1)<\min(X_2)$ 
such hat $X_1$ and $X_2$ are exactly $\omega$-large, and so on for $!\omega 3$, $!\omega 4$ etc.

\begin{definition}
Let $A^k_\ell$ be the following principle: For every coloring $c$ of the positive integers $\NN$ in $\ell$ colors
there exists an infinite subset $H$ of $\NN$ such that each $n\in H$ is $!\omega$-large, $H$ is apart, 
and $FS^{=k}(H)$ is monochromatic.
\end{definition}

The principle $A^k_\ell$ is essentially $\mathsf{HT}^{=k}_\ell$ with apartness plus the extra (far from trivial!) constraint that the 
solution set is contained in the set of $!\omega$-large numbers.

\begin{theorem}
$\mathsf{RCA}_0 \vdash \forall k > 0\forall \ell>0 (A^k_\ell \to \mathsf{RT}^k_\ell)$. In fact, $\mathsf{RT}^k_\ell\leq_{\mathrm{sc}} A^k_\ell$. 
\end{theorem}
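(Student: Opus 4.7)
The plan is to establish the strong computable reduction directly: from a coloring $c \colon [\NN]^k \to \ell$ I would compute a coloring $c' \colon \NN \to \ell$ of the positive integers so that any solution $H$ of $A^k_\ell$ applied to $c'$ is already homogeneous for $c$. The key observation is that when $H$ is apart and each $h \in H$ is $!\omega$-large, the sum of any $k$ distinct elements of $H$ contains in its binary representation alone enough information to recover the individual summands uniquely. The whole reduction then consists of building the decoder into the coloring.

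The decoder is a greedy left-to-right parse of $e(n)$. For $n \in \NN$, let $B_1$ consist of the $\min(e(n))+1$ smallest elements of $e(n)$, if that many are available; let $B_2$ consist of the $\min(e(n)\setminus B_1)+1$ smallest elements of $e(n)\setminus B_1$; and continue, at each stage pulling off the next $\min + 1$ remaining exponents to form the next block. If this procedure produces exactly $k$ nonempty blocks $B_1,\dots,B_k$ that partition $e(n)$, set $h_j(n)=\sum_{i\in B_j}2^i$ and define $c'(n)=c(h_1(n),\dots,h_k(n))$; otherwise set $c'(n)=0$. The parse is primitive recursive, and $c'$ only queries $c$ on the recovered tuple, so $c'$ is uniformly computable from $c$ (in particular the construction can be carried out in $\mathsf{RCA}_0$).

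For the verification, fix any $h_1<\cdots<h_k$ chosen from a solution $H$ of $A^k_\ell$ on $c'$. Apartness of $H$ gives $e(h_1+\cdots+h_k)=e(h_1)\sqcup\cdots\sqcup e(h_k)$ with $\max e(h_j)<\min e(h_{j+1})$, while $!\omega$-largeness of each $h_j$ forces $|e(h_j)|=\min e(h_j)+1$. Hence, applied to $e(h_1+\cdots+h_k)$, the greedy parse peels off the blocks $e(h_1),\dots,e(h_k)$ in order and recovers the tuple $(h_1,\dots,h_k)$, so $c'(h_1+\cdots+h_k)=c(h_1,\dots,h_k)$. Monochromaticity of $FS^{=k}(H)$ under $c'$ therefore forces $c$ to be constant on $[H]^k$, and $H$ itself is returned as the $\mathsf{RT}^k_\ell$-solution on $c$; this gives $\mathsf{RT}^k_\ell\leq_{\mathrm{sc}}A^k_\ell$.

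The only nontrivial content is the uniqueness of the greedy parse on $e(h_1+\cdots+h_k)$, and this is precisely the reason the $!\omega$-large restriction (beyond apartness) was built into $A^k_\ell$: the cardinality constraint $|e(h_j)|=\min e(h_j)+1$ is exactly what pins down where each successive block must end. Neither apartness alone nor $!\omega$-largeness alone would suffice, which explains why standard $\mathsf{HT}^{=k}_\ell$ with apartness was only known to imply $\mathsf{IPT}^2_2$ at exponent $2$, whereas the strengthened principle $A^k_\ell$ captures the full Ramsey strength at every exponent.
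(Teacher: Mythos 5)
Your proof is correct and is essentially the paper's argument: the paper defines $c(n)=d(n_1,\dots,n_k)$ on $!\omega k$-large $n$ using the (unique) decomposition into $k$ apart $!\omega$-large summands, and you have simply made the decoding explicit via the greedy parse of $e(n)$ into successive $\min+1$-sized blocks, which is exactly the algorithm witnessing that uniqueness. The verification—apartness gives carry-free addition, $!\omega$-largeness pins down the block boundaries—is the same in both.
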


\begin{proof}
Let $d:[\NN]^k\to \ell$ be given. Define $c:\NN\to \ell$ as follows. 
If $n$ is not $!\omega k$-large then $c$ colors $n$ arbitrarily. 
If $n$ is $!\omega k$-large then $c(n)=d(n_1,\dots,n_k)$, where $n_1 < \dots < n_k$ 
are the unique $!\omega$-large numbers such that $n=n_1+\dots+n_k$.
Let $H$ be a solution to $A^k_\ell$ for $c$ and let $i<\ell$ be the color of $FS^{=k}(H)$
under $c$. We claim that $[H]^k$ is monochromatic of color $i$ under $d$.
Indeed, let $a_1<\dots<a_k$ be in $H$. Then $a_1,\dots,a_k$ are $!\omega$-large and add in base 2 with 
no carry since $H$ is apart. Thus, $a=a_1+\dots+a_k$ is $!\omega k$-large and is in $FS^{=k}(H)$.
Therefore $i = c(a)=d(a_1,\dots,a_k)$. 
\end{proof}

\begin{theorem}
$\mathsf{RCA}_0\vdash \forall k > 0\forall \ell>0 (\mathsf{RT}^k_\ell \to A^k_\ell)$. 
In fact, $A^k_\ell\leq_{\mathrm{sc}} \mathsf{RT}^k_\ell$.
\end{theorem}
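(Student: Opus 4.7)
The plan is to reduce an instance of $A^k_\ell$ to an instance of $\mathsf{RT}^k_\ell$ by coding a colouring $c : \NN \to \ell$ as a colouring of $[\NN]^k$ through a fixed computable, apart, strictly increasing sequence $B = (b_s)_{s \in \NN}$ of $!\omega$-large numbers. Since $B$ will not depend on $c$, the reduction will in fact be strong.

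First I would construct $B$ computably. For each $m \ge 1$ the integer $n_m = 2^m + 2^{m+1} + \cdots + 2^{2m}$ satisfies $e(n_m) = \{m, m+1, \ldots, 2m\}$, so $|e(n_m)| = m+1 = \min(e(n_m)) + 1$ and hence $n_m$ is $!\omega$-large. Taking any computable sequence $m_0 < m_1 < \cdots$ with $2 m_s < m_{s+1}$ (say $m_s = 2^{s+1}$) and setting $b_s := n_{m_s}$ gives $\max e(b_s) = 2m_s < m_{s+1} = \min e(b_{s+1})$, so $B = \{b_s : s \in \NN\}$ is apart as required.

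Given $c$, I would then define $d : [\NN]^k \to \ell$ by
\[
d(\{i_1 < \cdots < i_k\}) := c(b_{i_1} + b_{i_2} + \cdots + b_{i_k}),
\]
which is computable uniformly from $c$. Applying $\mathsf{RT}^k_\ell$ to $d$ yields an infinite $d$-homogeneous $I \subseteq \NN$, and I would set $H := \{b_i : i \in I\}$. Since $B$ is fixed and computable, $H$ is computable from $I$ alone, which witnesses $A^k_\ell \leq_{\mathrm{sc}} \mathsf{RT}^k_\ell$ and in particular yields the implication $\mathsf{RT}^k_\ell \to A^k_\ell$ in $\mathsf{RCA}_0$.

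It remains to check that $H$ is a solution of $A^k_\ell$ for $c$: $H \subseteq B$ is apart with $!\omega$-large elements, and every element of $FS^{=k}(H)$ has the form $b_{i_1} + \cdots + b_{i_k}$ for some $i_1 < \cdots < i_k$ in $I$, hence receives the constant $d$-colour of $[I]^k$ under $c$. There is essentially no obstacle in this direction; the only delicate point is producing the computable apart sequence $B$ of $!\omega$-large numbers, which the explicit formula for $n_m$ handles outright.
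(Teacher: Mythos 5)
Your proposal is correct and takes essentially the same approach as the paper, which also defines $d(a_1,\dots,a_k)=c(a_1+\cdots+a_k)$ and applies $\mathsf{RT}^k_\ell$ along a fixed infinite apart set of $!\omega$-large numbers; the only extra content is that you spell out the explicit computable construction of $B$, which the paper leaves implicit. One small slip: with $m_s=2^{s+1}$ you get $2m_s=2^{s+2}=m_{s+1}$, so the apartness inequality is not strict; take e.g.\ $m_s=2^{2s}$ instead, and everything goes through.
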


\begin{proof}
Let $c:\NN\to \ell$ be given. Define $d:[\NN]^k\to \ell$ as $d(a_1,\dots,a_k)=c(a_1+\dots+a_k)$.
Let $X\subseteq \NN$ be an infinite apart set consisting of $!\omega$-large numbers.
By Ramsey's Theorem for $X$ and $d$, there exists an infinite $H\subseteq X$ such that
$d$ is constant on $[H]^k$, say of color $i<\ell$. Then $H$ is a solution to $A_k$ for $c$,
since, if $a\in FS^{=k}(H)$ then $a$ is a sum of $k$ many exactly large elements of 
$H$, i.e., for some $a_1 < \dots < a_k$ in $H$ we have that $a=a_1+\dots+a_k$. 
Then $i = d(a_1,\dots,a_k)=c(a)$.
\end{proof}

\begin{corollary}
$\mathsf{RCA}_0 \vdash \forall k A^k_2 \to \forall k \mathsf{RT}^k_2$.
\end{corollary}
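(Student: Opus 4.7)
The plan is essentially to invoke the preceding theorem directly, since the corollary is an immediate logical consequence. The preceding theorem establishes $\mathsf{RCA}_0 \vdash \forall k > 0 \forall \ell > 0 (A^k_\ell \to \mathsf{RT}^k_\ell)$. Instantiating $\ell = 2$ yields $\mathsf{RCA}_0 \vdash \forall k > 0 (A^k_2 \to \mathsf{RT}^k_2)$, from which the corollary follows by standard predicate-calculus manipulation.

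Concretely, I would argue inside $\mathsf{RCA}_0$ as follows. Assume the antecedent $\forall k A^k_2$. Fix an arbitrary $k > 0$. Then $A^k_2$ holds by the assumption, and the preceding theorem, applied with this $k$ and with $\ell = 2$, gives $\mathsf{RT}^k_2$. Since $k$ was arbitrary, we conclude $\forall k \mathsf{RT}^k_2$, which is the consequent. The argument uses only universal instantiation and modus ponens, so there is no appeal to any induction scheme beyond what $\mathsf{RCA}_0$ already provides.

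There is no real mathematical obstacle here; the only thing worth underlining is uniformity. The reduction in the proof of the preceding theorem takes a coloring $c : \NN \to \ell$ and produces the input coloring $d(a_1,\dots,a_k) = c(a_1 + \dots + a_k)$ to $A^k_\ell$ by a single primitive recursive recipe that is uniform in $k$ and $\ell$. This is what legitimizes moving the universal quantifier $\forall k$ across the implication: one does not need to prove the per-$k$ implications one at a time via an external induction, because for each fixed $k$ the implication is a single instance of the already-established statement.

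Finally, I would point out the significance of the corollary: by the standard reverse-mathematical equivalence $\mathsf{RCA}_0 \vdash \forall k \,\mathsf{RT}^k_2 \leftrightarrow \mathsf{ACA}_0'$, the corollary shows that the augmented Hindman-type principle $\forall k A^k_2$ already implies $\mathsf{ACA}_0'$. This fulfils the stated goal of the section, namely to exhibit an Hindman-type variant whose schematic version reaches $\mathsf{ACA}_0'$, and it does so via a level-by-level combinatorial reduction rather than via a detour through proof-theoretic methods.
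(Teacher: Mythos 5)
Your proof is correct and matches the paper's intent exactly: the corollary is stated without a separate proof because it follows immediately from the preceding theorem by instantiating $\ell = 2$ and pushing the quantifier $\forall k$ across the implication, which is precisely what you do. The remarks about uniformity and the connection to $\mathsf{ACA}_0'$ are accurate and echo the paper's surrounding discussion.
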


Hence, $\forall k A^k$ implies $\mathsf{ACA}_0'$ over $\mathsf{RCA}_0$. The following question is then of interest:

\begin{question}
Does Hindman's Theorem imply $\forall k A^k_2$? 
\end{question}
Note that, in $A^k_\ell$, the condition that the elements of the solution set are $!\omega$-large can be replaced
by various other conditions. For example we might require that all elements of the solution set
have the same binary length. The argument showing that $A^k_\ell$ implies $\mathsf{RT}^k_\ell$ is inspired by 
an argument attributed to Justin Moore which I learned from David Fernandez Breton
(private communication), proving that a cardinal satisfying Hindman's Finite Unions Theorem has to be
weakly compact. 


  \part{Group  theory and its connections to logic}
     \section{Chiodo, Nies and Sorbi: Decidability problems for f.g.\ groups}

Maurice Chiodo, Nies and Andrea Sorbi discussed decidability problems for f.g.\ groups in April and May, both in Siena and by Skype. 

\subsection*{The c.e.\ equivalence relation of isomorphism between finitely presented (f.p.)  groups} C.F.\ Miller \cite{Miller:71}  has shown that the  c.e.\ equivalence relation (ceer)  $\cong_{f.p.}$ of isomorphism between finitely presented groups is $\SI 1$ complete within the ceer's. Nies and Sorbi \cite{Nies.Sorbi:16} noticed that it has a diagonal function $f$, namely $f$ is computable and $f(x)$ is not equivalent to $x$. 

The ceer $\cong_{f.p.}$   is not effectively inseparable as pointed out by Chiodo: Let $\+ A$ be the class of f.p.\ groups $G $ such that   $G_{ab} \cong  \ZZ$. Then $\+ A $ is computable, and separates the class of  a presentation of $\ZZ$ from the class of a presentation of $\ZZ \times \ZZ$.  

Recall that a group $G$ is perfect if $G' = G$, or equivalently $G_{ab}$ is trivial. We can list   finite presentations of all the  perfect f.p.\ groups by including relations that write each generator as a product of commutators in a particular way. 

\begin{question} If  $\cong_{f.p.}$ restricted to presentations of perfect groups effectively inseparable? \end{question}

\subsection*{Problems on f.g.\ groups}

The following questions are  long standing.
\begin{question} Is some  infinite f.p.\ group  a torsion group? \end{question}

\begin{question}[Weigold] Is each f.g.\ perfect group the normal closure of a single element? \end{question} 

\subsection*{Algorithmic problems}

\begin{question} Is the relation  among f.p.\ groups ``$B$ is a quotient of $A$" $\SI 1$--complete as a pre-order? 
\end{question} 

\begin{question} Find an algorithm that on input a finite presentation $G=\la X \mid R \ra$, outputs a word $w$ in $X$ such that $w= 1 $ in $G \LR G $ is trivial. \end{question} 
     \section{Fouche and Nies: randomness notions in computable profinite groups}

Willem Fouche visited New Zealand for three weeks in October. He and Nies continued their work on the effective content of  results of Jarden, Lubotzky and others. This work was started in  \cite[Section 16]{LogicBlog:16}, where background is provided. We only recall here the following.

\begin{definition}[Smith~\cite{Smith:81}] \label{def: computable profinite} \  \bi \item[(i)]   A profinite group $G$ is called \emph{co-r.e.} if it is the inverse limit  of a computable  inverse system $\la G_n, p_n\ra$ of finite groups  (i.e.\ the groups $G_n$ and the maps $p_n$ between them are uniformly computable). Equivalently, the subgroup $U$  above is a $\PI 1$ subclass of $\prod_{n} G_n$. \item[(ii)] $G$  is called \emph{computable} if, in addition,  the  maps $p_n$    can be chosen  onto. In other words, the set of extendible nodes in the tree corresponding to $U$ is computable.   \ei\end{definition}

Each separable profinite group is equipped with a unique Haar probability measure (i.e., a  probability measure that is invariant under left and under right translations). For instance, for the 2-adic integers $\ZZ_2$, the Haar measure is the usual product measure on Cantor space.
If the group is computable then so is the Haar measure, using the notion of a  computable probability  space due to  Hoyrup and Rojas \cite{Hoyrup.Rojas:09a}.

The  Jarden, Lubotzky  et al.\ results are  theorems of ``almost everywhere" type in various   profinite groups $G$: they assert a property for almost every tuple in $G^e$, for some  $e$ that is fixed for the particular result. These groups are usually computable, in which case randomness notions defined via algorithmic tests (with respect  to the Haar measure) can be applied in $G^e$.  So,  unlike the usual process of effectivizing results from     analysis \cite{Brattka.Miller.ea:16}, in this case the existing ``classical" results have an effective content \emph{per se}, which  only needs to be made explicit. To do so is our purpose. 

\subsection{Computable profinite groups that are completions} \label{ss:computable completions}
 We update the information in  \cite[Section 16]{LogicBlog:16}. The definition  below is taken from  \cite[Section 3.2]{Ribes.Zalesski:00}. 
 Let $G$ be a group, $\+ V$ a set of normal subgroups of finite index in $G$ such that $U, V \in  \+ V $ implies that there is $W \in \+ V$  with  $W \sub U  \cap V$. We can turn $G$ into a topological group by declaring $\+ V$ a basis of neighbourhoods (nbhds) of the identity. In other words, $M \sub G$ is open  if for each $x \in M$ there is $U \in \+ V$ such that $xU \sub M$. 
 
\begin{definition} \label{def:completion} The completion of $G$ with respect to $\+ V$ is the inverse limit  $$G_{\+ V} = \varprojlim_{U \in \+ V} G/U,$$ where $\+ V$ is ordered under inclusion and the inverse system is equipped with  the natural    maps: for $U \sub  V$, the  map  $p_{U,V} \colon G/U \to G/V $  is given by $gU \mapsto gV$. 
\end{definition} The inverse limit can be seen as a closed subgroup of the direct product $\prod_{U \in \+ V} G/U$ (where each group $G/U$ carries  the discrete topology), consisting of the functions~$\alpha $ such that $p_{U,V}(\alpha (gU)) = gV$ for each $g$.  Note that the map $g\mapsto (gU)_{U \in \+ V}$ is a continuous homomorphism $G \to G_{\+ V}$ with dense image; it is injective iff $\bigcap \+ V = \{1\}$. 
  
  Suppose $G$ is a computable group, and the class $\+ V$ in Definition~\ref{def:completion} is uniformly computable in that there is a uniformly computable sequence  $\seq{R_n}$ such that $\+ V = \{ R_n \colon n \in \NN \}$. Suppose further that  $W$ above can be obtained effectively from $U,V$. Then there is a  uniformly computable  descending  subsystem  $\seq {T_k}$ of $\seq {R_n}$  such that  $\fa n \ex k \, T_k \le R_n$. Since we can effectively find coset representatives of $T_n$ in $G$, the inverse system  $\seq  {G/T_n}$ with the natural projections  $T_{n+1} a \to T_n a$ is computable. So $G_\+ V$ is computable.  
 
Suppose we are given two computable sequences $\seq{R_n}$ and $\seq{S_k}$ as above. If for each $n$ we can compute $k$ such that $S_k \sub R_n$, and vice versa. Then the completions obtained via the two sequences are computably isomorphic. 
 
 The criterion above is satisfied by $F_k$ and $F_\omega$ with the systems of normal subgroups introduced in \cite[Section 16]{LogicBlog:16}. Thus their completions $\hat F_k$ and $\hat F_\omega$ are computable profinite groups. 
 
 \begin{lemma} Let $G$ be $k$-generated ($ k \le \omega$). Then $G$ is computable [co-r.e.] iff $G= \hat F_k/N$ for some computable normal subgroup $N$ ($\PI 1$ N).  \end{lemma}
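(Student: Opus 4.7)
The plan is to pass between a computable [co-r.e.]\ presentation of the $k$-generated profinite group $G$ and a closed normal subgroup $N \trianglelefteq \hat F_k$ with $\hat F_k/N \cong G$, via the universal property of $\hat F_k$. Throughout, fix the computable inverse system $\hat F_k = \varprojlim F_k/R_j$ from the discussion above, with canonical projections $\rho_j \colon \hat F_k \to F_k/R_j$.

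\textbf{Forward direction.} Choose a generating $k$-tuple $\mathbf g = (g_i)_{i<k}$ of $G$ computable with respect to the given inverse system $\langle G_n, p_n \rangle$. The assignment $x_i \mapsto g_i$ extends uniquely to a continuous homomorphism $\pi \colon \hat F_k \to G$, and density of the image together with compactness of $\hat F_k$ gives surjectivity; setting $N = \ker \pi$ yields $G \cong \hat F_k/N$. For each $n$, let $\pi_n = p_n \circ \pi$ and $\overline K_n = \pi_n^{-1}(1)$, which is clopen of the form $\rho_{m(n)}^{-1}(\widetilde K_n)$ with $m(n)$ effectively found as the least $m$ for which $w(\mathbf g^{(n)}) = 1$ in $G_n$ for every $w$ in a computable Nielsen--Schreier generating set of $R_m$. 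Since $N = \bigcap_n \overline K_n$, the complement $\hat F_k \setminus N = \bigcup_n (\hat F_k \setminus \overline K_n)$ is uniformly c.e.\ open, so $N$ is $\Pi^0_1$. In the computable case, surjectivity of the system for $G$ provides enough additional uniformity to pin down the clopen subgroup $L_m := \pi(\overline R_m) \le G$ from the stabilised images $\pi_n(\overline R_m) \le G_n$, whence the induced finite quotient map $F_k/R_m \twoheadrightarrow G/L_m$ is computable and its kernel equals $\rho_m(N)$; thus membership in $\rho_m(N)$ is decidable uniformly in $(m,\sigma)$, and $N$ is computable.

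\textbf{Backward direction.} Conversely, suppose $N$ has the appropriate effectivity and form $G := \hat F_k/N$. In the computable case, take $G_m := (F_k/R_m)/\rho_m(N)$ with onto transitions induced by $F_k/R_{m+1} \twoheadrightarrow F_k/R_m$; uniform computability of $\rho_m(N)$ makes this a uniformly computable surjective inverse system with limit $G$, giving $G$ computable. In the $\Pi^0_1$ case, enumerate the c.e.\ set of pairs $(m, K')$ with $K' \trianglelefteq F_k/R_m$ and $\rho_m(N) \subseteq K'$. The containment is $\Sigma^0_1$: ``$\sigma \notin \rho_m(N)$'' means $[\sigma] \subseteq \hat F_k \setminus N$, witnessed at a finite stage of the c.e.\ open approximation to $N^c$ by compactness of $[\sigma]$. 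Writing $K_s = \rho_{m_s}^{-1}(K_s')$ for the $s$-th enumerated pair and $L_n = \bigcap_{s \le n} K_s$, we obtain a computable descending sequence of open normal subgroups containing $N$, with $\bigcap_n L_n = N$ since every closed subgroup of a profinite group is the intersection of the open subgroups containing it. The uniformly computable inverse system $\langle \hat F_k/L_n \rangle$ then presents $G$ co-r.e.

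The main technical obstacle is the forward computable case: one must extract a computable description of the clopen subgroup $\pi(\overline R_m) \le G$ from the surjective inverse system of $G$, so that the kernel $\rho_m(N)$ of $F_k/R_m \twoheadrightarrow G/\pi(\overline R_m)$ becomes uniformly decidable. This rests on the onto-ness of the transition maps in the $G$-system and on the stabilisation behaviour of the finite images of $R_m$ in each $G_n$, and is the step that distinguishes the computable from the merely co-r.e.\ setting.
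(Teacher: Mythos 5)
The backward directions look sound: for computable $N$ the quotient system $(F_k/R_m)/\rho_m(N)$ is uniformly computable with onto transitions, and for $\Pi^0_1$ $N$ the enumeration of open normal supergroups (via compactness of $\sigma\overline R_m\subseteq N^c$) gives a co-r.e.\ system. The forward direction is where I see real problems, and they are not merely details.

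First, you assume a generating $k$-tuple $\mathbf g$ of $G$ \emph{computable with respect to the given system} exists, but you never argue for it. In the computable (onto-transitions) case this does hold, but it requires Gasch\"utz's lemma: lift a generating tuple of $G_n$ to one of $G_{n+1}$ above it, using that $G_{n+1}$ is itself $k$-generated. In the co-r.e.\ case the transitions need not be onto, the actual image of $G$ in $G_n$ is only $\Pi^0_1$ known, and it is not clear that a computable tuple generating all of $G$ exists; yet your $\Pi^0_1$ bound on $N$ uses the computability of $\mathbf g$ in an essential way. That case needs a different argument, or at least a justification that is currently missing.

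Second, and more seriously, the step you yourself flag as the ``main technical obstacle'' --- pinning down $L_m=\pi(\overline R_m)$ from the stabilised images $\pi_n(\overline R_m)\le G_n$ --- is exactly what does not go through. The indices $[G_n:\pi_n(\overline R_m)]$ form a nondecreasing sequence of divisors of $[F_k:R_m]$ that eventually equals $[G:L_m]$, but you have no effective way to recognise when it has stabilised, because $[G:L_m]$ is precisely the unknown. Concretely, take $S$ an infinite c.e., non-computable set of primes, and let $G=\prod_{p\in S}\ZZ/p\ZZ$; this is a computable, $1$-generated profinite group via the onto system $G_n=\ZZ/(p_0\cdots p_n)\ZZ$. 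But the \emph{only} closed $N\le\hat\ZZ$ with $\hat\ZZ/N\cong G$ is $N=\{x:\forall p\in S,\,p\mid x\}$, whose level-$m$ projection is $\bigl(\prod_{p\in S\cap[2,m]}p\bigr)\ZZ/m!\ZZ$; computing this uniformly in $m$ would compute $S$. So $N$ is $\Pi^0_1$ but not computable, and no amount of cleverness about $L_m$ can fix the argument as written. Either the lemma is using a different notion of ``computable closed subgroup'' than the obvious one (computable tree of extendible nodes), or there is an extra hypothesis implicit in the statement; in any case your forward argument for the computable case does not close, and you should address this directly rather than defer it as a technicality.

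Minor point: in the forward $\Pi^0_1$ bound you write that $m(n)$ is found by checking that $w(\mathbf g^{(n)})=1$ in $G_n$ for all $w$ in a computable generating set of $R_m$ --- that is fine, but note this only establishes $\Pi^0_1$-ness of $N$ \emph{given} a computable $\mathbf g$; it would be cleaner to state that the complement of $N$ is the effective union of the clopen sets $\{w:\pi_n(w)\neq 1\}$, each determined at level $\max(m,m(n))$, and that this union is c.e.\ open uniformly because both $\mathbf g$ and the system for $G$ are computable.
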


\subsection{Abelian free profinite groups}

Let $\widehat \ZZ$ denote the free profinite group of rank $1$.  Note that $\widehat \ZZ$ is the inverse limit of the directed system of groups $\ZZ / n \ZZ$ with the natural projections from $\ZZ / n \ZZ$ to $\ZZ / k \ZZ$ in case $k$ divides $n$.  By $\la S \ra$ one denotes the closed subgroup generated by a subset $S$ of a group.

\begin{prop} Let $z \in \widehat   \ZZ $ be Kurtz random. Then    $\la z \ra$ has infinite index in $\widehat \ZZ$ and $\la z \ra \cong \widehat \ZZ$.  
\end{prop}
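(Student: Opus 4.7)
The plan is to exploit the Chinese Remainder isomorphism $\widehat{\ZZ} \cong \prod_p \ZZ_p$. This is an isomorphism of computable profinite groups that carries the Haar measure on $\widehat{\ZZ}$ to the product of the $p$-adic Haar measures. Writing an element $z \in \widehat{\ZZ}$ as a tuple $(z_p)_p$, the first step is to verify the formula
\[ \la z \ra \;=\; \prod_p p^{v_p(z_p)} \ZZ_p, \]
where $v_p$ is the $p$-adic valuation, with the convention $v_p(0) = \infty$ and $p^\infty \ZZ_p = \{0\}$. One inclusion is immediate; the reverse uses that the open subgroups of $\widehat{\ZZ}$ are precisely the $N\widehat{\ZZ}$ and applies CRT to hit any prescribed target modulo $N$ by an integer multiple of $z$.

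Given this formula, the proposition reduces to showing that a Kurtz random $z$ satisfies (a) $v_p(z_p) < \infty$ for every prime $p$, which gives $p^{v_p(z_p)} \ZZ_p \cong \ZZ_p$ for each $p$ and hence $\la z \ra \cong \widehat{\ZZ}$; and (b) $v_p(z_p) \ge 1$ for infinitely many primes $p$, which forces the index $[\widehat{\ZZ} : \la z \ra] = \prod_p p^{v_p(z_p)}$ to be infinite.

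For (a), I would look at $C_p = \{ z \colon z_p = 0 \} = \bigcap_k \{ z \colon p^k \mid z \}$. Each $\{z \colon p^k \mid z\}$ is clopen, so $C_p$ is a $\Pi^0_1$ class of Haar measure $\lim_k p^{-k} = 0$, and Kurtz randomness excludes $z$ from every such $C_p$. For (b), the naive statement ``infinitely many $p$ divide $z$'' is $\Pi^0_2$, so I would split it into countably many $\Pi^0_1$ pieces: for each $N$ let
\[ D_N \;=\; \{ z \in \widehat{\ZZ} \colon p \nmid z \text{ for all primes } p > N \}. \]
The complement $\bigcup_{p > N}\{z \colon p \mid z\}$ is a computable union of clopen sets, so $D_N$ is $\Pi^0_1$, and its Haar measure is
\[ \mu(D_N) \;=\; \prod_{p > N}(1 - 1/p) \;=\; 0, \]
the product vanishing because $\sum 1/p$ diverges (so $\sum_{p > N} \log(1-1/p) = -\infty$). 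Kurtz randomness then excludes $z$ from every $D_N$, yielding (b).

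The main obstacle I anticipate is not in any of the individual steps---each is short---but in the computability bookkeeping: checking that the CRT decomposition is a computable isomorphism of computable profinite groups, that Haar measure pushes forward to the product measure, and that $C_p$ and $D_N$ are genuinely $\Pi^0_1$ in the natural tree presentation of $\widehat{\ZZ}$. These all reduce to the decidability of the congruences $p^k \mid n$ from sufficiently long initial data, but a clean writeup requires care.
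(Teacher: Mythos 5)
The paper states this proposition without giving a proof, so there is no paper argument to compare against; your proof fills the gap and is correct. The decisive move is recasting the $\Pi^0_2$ condition ``infinitely many primes divide $z$'' (which is what forces infinite index) as the countable family of $\Pi^0_1$ conditions $z \notin D_N$, each a nullset because $\sum_p 1/p$ diverges. That is exactly what makes Kurtz randomness suffice, since Kurtz randomness only avoids $\Pi^0_1$ nullsets. The identity $\la z \ra = \prod_p p^{v_p(z_p)}\ZZ_p$ is also right: the closed subgroups of $\widehat{\ZZ}$ are precisely $N\widehat{\ZZ}$ for supernatural numbers $N$, so $\la z\ra$ is the smallest such containing $z$, and the smallest is read off from the $p$-adic valuations. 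Your conditions (a) and (b) then yield $\la z \ra \cong \widehat{\ZZ}$ and infinite index respectively.

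One streamlining worth making: you can avoid the computability bookkeeping around the CRT isomorphism, which you rightly flag as the fussy part, by never leaving $\widehat{\ZZ}$. Each $p^k\widehat{\ZZ}$ is a uniformly computable clopen subgroup of Haar measure $p^{-k}$, so
\[
C_p = \bigcap_{k} p^k\widehat{\ZZ}, \qquad D_N = \bigcap_{p>N}\bigl(\widehat{\ZZ}\setminus p\widehat{\ZZ}\bigr)
\]
are directly $\Pi^0_1$ nullsets in the given computable presentation of $\widehat{\ZZ}$, with no need to verify that the product decomposition is a computable measure-preserving isomorphism. The structure of closed subgroups then gives $\la z \ra = N\widehat{\ZZ}$ where $N$ records $\sup\{k : p^k \mid z\}$ at each prime, and (a), (b) read the same way. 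This is in the same spirit as the paper's proof of the neighboring Schnorr-random proposition, which uses the test $G_n = (n\widehat{\ZZ})^e$ directly in $\widehat{\ZZ}^e$; your tests just refine the divisibility analysis to the prime-by-prime level, which is what the finer conclusion (infinite index, yet still $\cong\widehat{\ZZ}$) demands.
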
 

For tuples rather than singletons, the opposite happens: random tuples generate a subgroup of finite  index.

\begin{prop} Let $e \geq 2$ and suppose that $z \in (\widehat   \ZZ)^e $ is  Schnorr random.  Then $\la z \ra$ has  finite index in $\widehat \ZZ$ and $\la z \ra \cong \widehat \ZZ$. 
 \end{prop}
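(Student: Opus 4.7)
Via the Chinese Remainder Theorem, $\widehat\ZZ$ is computably isomorphic to $\prod_p \ZZ_p$, with Haar measure pushing forward to the product measure $\prod_p \mu_p$. Every closed subgroup of $\widehat\ZZ$ has the form $\prod_p p^{k_p}\ZZ_p$ for a unique sequence $(k_p)$ indexed by the primes, with $k_p\in\{0,1,\ldots,\infty\}$ and $p^\infty\ZZ_p := \{0\}$; it has finite index iff $k_p<\infty$ for every $p$ and $k_p=0$ for cofinitely many $p$, in which case it equals $n\widehat\ZZ$ for $n = \prod_p p^{k_p}\in\NN$. Writing $z_{i,p}$ for the $\ZZ_p$-component of $z_i$, the subgroup $\la z \ra$ corresponds to $k_p = \min_{i \le e} v_p(z_{i,p})$. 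The plan is to cover by Schnorr tests both (a) the event that $k_p\ne 0$ for infinitely many $p$, and (b) the event that $k_p = \infty$ for some $p$.

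For (a), set $E_p = \{z : p\mid z_{i,p}\text{ for all } i\}$, a clopen set of measure $p^{-e}$; the $E_p$ are independent across primes since each depends only on the $\ZZ_p$-factor. Because $e\ge 2$, the Euler product $\prod_p (1-p^{-e}) = 1/\zeta(e)$ converges to a computable real. Choose a computable $N(m)$ with $\sum_{p\ge N(m)} p^{-e}\le 2^{-m}$ and put $U_m = \bigcup_{p \ge N(m)} E_p$. By independence,
\[
\mu(U_m) \;=\; 1 - \frac{1/\zeta(e)}{\prod_{p<N(m)}(1-p^{-e})},
\]
which is computable uniformly in $m$; since $\mu(U_m)\le 2^{-m}$, $(U_m)$ is a Schnorr test covering (a). For (b), note that $k_p = \infty$ implies $z \in B_{p,j}:= \{z : v_p(z_{i,p})\ge j \text{ for all }i\}$ for every $j$, and $\mu(B_{p,j}) = p^{-ej}$. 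Taking a sufficiently fast-growing computable $g(m)$ (e.g.\ $g(m)\ge m$ after a finite shift, using $e\ge 2$), the set $W_m = \bigcup_p B_{p,g(m)}$ has computable measure $1 - 1/\zeta(e\,g(m))\le 2^{-m}$ by independence, yielding a Schnorr test covering (b).

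Combining (a) and (b), Schnorr randomness of $z$ forces $\la z\ra = n\widehat\ZZ$ for some $n\in\NN$, of finite index $n$. Since $\widehat\ZZ$ is torsion-free, multiplication by $n$ is a continuous injective homomorphism $\widehat\ZZ\to \widehat\ZZ$ whose image, being the continuous image of a compact space inside a Hausdorff group, is the closed subgroup $n\widehat\ZZ = \la z\ra$; so this map is a topological isomorphism $\widehat\ZZ \to \la z\ra$. The main difficulty is the passage from Martin-L\"of to Schnorr randomness: the first Borel--Cantelli lemma a priori gives only a Martin-L\"of test, and it is the independence of the events $E_p$ (respectively $B_{p,j}$) across primes that allows the test measures to be rewritten as truncated Euler products, making them computable thanks to the computability of $\zeta(e)$ for integer $e\ge 2$.
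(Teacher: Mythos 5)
Your proof is correct, but it takes a genuinely different (and longer) route than the paper's. The paper gives a one-shot argument entirely inside $\widehat\ZZ$: take $G_n = (n\widehat\ZZ)^e$, observe $\mu(G_n)=n^{-e}$ and that $\sum_n n^{-e}$ is a computable real, so $(G_n)$ is a Schnorr--Solovay test; a Schnorr random $z$ lies in only finitely many $G_n$, and since every closed subgroup of $\widehat\ZZ$ of infinite index is contained in $n\widehat\ZZ$ for infinitely many $n$, $\la z\ra$ must have finite index. You instead pass through the isomorphism $\widehat\ZZ\cong\prod_p\ZZ_p$, separate ``infinite index'' into the two failure modes (a) infinitely many primes with $k_p\neq 0$ and (b) some $k_p=\infty$, and construct two distinct Schnorr tests, computing the measures via truncated Euler products and the computability of $\zeta(k)$ for integers $k\ge 2$, using the independence of events across primes. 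Both arguments are valid. The paper's is tighter: using divisibility by \emph{all} moduli $n$, rather than by primes and prime powers separately, captures both failure modes in one Solovay test and sidesteps any appeal to independence or to uniform computability of $\zeta(eg(m))$ in $m$. Your decomposition makes the supernatural-number picture of closed subgroups explicit and you correctly identify that the real content is computability (not just finiteness) of the total test measure; it is slightly more work but it also generalizes more transparently to other profinite products where one might want to control the $p$-components individually.
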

 \begin{proof} Let $G_n = (n \widehat \ZZ)^e$. Note that $\mu G_n = n^{-e}$ and $G_n$ is uniformly $\SI 1$. Since $\sum_n n^{-e}$ is finite and computable,  $\seq {G_n}\sN n$ is a Schnorr-Solovay test. Therefore $ z \not \in G_n$ for sufficiently large $n$.  
 
 If $U$ is a closed subgroup of $ \widehat \ZZ$ then $U$ is the intersection of the groups of the form  $n \widehat \ZZ$ containing it. So,  if $U$ has infinite index there are infinitely many such $n$. Hence $\la z \ra$ has finite index.  Since it is also  closed, it is open, and hence isomorphic to $\widehat \ZZ$. 
\end{proof}

\begin{prop} There is a Kurtz random $z \in (\widehat   \ZZ)^2 $ such that $\la z \ra$ is of infinite index in $\widehat \ZZ$. \end{prop}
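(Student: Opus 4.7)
The plan is to exhibit the required $z$ as any weakly $1$-generic point of the computable Polish space $\widehat{\ZZ}^2$. As in the proof of the preceding proposition, $\langle z \rangle$ has infinite index in $\widehat{\ZZ}$ iff $z \in G_n := (n\widehat{\ZZ})^2$ for infinitely many $n$. Set $U_k := \bigcup_{n \geq k} G_n$ and $S := \bigcap_k U_k$. Each $G_n$ is a basic clopen subgroup of $\widehat{\ZZ}^2$, so $(U_k)$ is a uniformly $\Sigma^0_1$ sequence of open sets, and $S$ is exactly the set of $z$ whose generated closed subgroup has infinite index.

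The key computation is to verify that each $U_k$ is dense. Given an arbitrary basic clopen neighborhood $(a_1,a_2)+(m\widehat{\ZZ})^2$ of $\widehat{\ZZ}^2$, I will choose a prime $p \ge k$ with $p \nmid m$ and invoke the Chinese Remainder Theorem to obtain $b_1, b_2 \in \ZZ$ with $b_i \equiv a_i \pmod m$ and $b_i \equiv 0 \pmod p$. Then $(b_1,b_2)+(mp\widehat{\ZZ})^2$ is a basic clopen subset of the given neighborhood that is contained in $G_p \subseteq U_k$. Hence $S$ is a dense effective $G_\delta$.

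To finish, pick any $z \in \widehat{\ZZ}^2$ that is weakly $1$-generic, meaning $z$ meets every dense $\Sigma^0_1$ open set; such $z$ exist by the Baire category theorem, since there are only countably many $\Sigma^0_1$ open sets in our effective basis. Then $z \in S$ automatically, and $z$ is also Kurtz random: for any $\Pi^0_1$ class $C \subseteq \widehat{\ZZ}^2$ of Haar measure zero, $C$ is closed with empty interior (since non-empty basic clopen sets have positive Haar measure), so $\widehat{\ZZ}^2 \setminus C$ is a dense $\Sigma^0_1$ open set and therefore contains $z$. The only real content is the CRT density step; the Baire category argument and the passage from weak $1$-genericity to Kurtz randomness are both soft. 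If one prefers a direct construction, the same idea runs as a finite-extension argument, alternating stages that refine the current cylinder into some $G_{n_s}$ (by CRT, as above) with stages that shrink it away from the next enumerated $\Pi^0_1$ null class (using that such a class has empty interior inside any basic clopen set).
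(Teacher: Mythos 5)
Your proof is correct and follows the same overall strategy as the paper: exhibit the set of $z$ generating a subgroup of infinite index as a dense effective $G_\delta$, take a weakly $1$-generic $z$ meeting it, and observe that weak $1$-genericity yields Kurtz randomness because $\Pi^0_1$ Haar-null classes have empty interior. The difference lies in how density is established. The paper fixes the presentation $\widehat{\ZZ}=\varprojlim \ZZ/n!\ZZ$, so that elements become paths on a tree whose level-$k$ node has $k+1$ children, and it phrases the dense condition in terms of factorial digits (infinitely many $n$ with $f^0(n)=f^1(n)=0$). You instead work intrinsically with the subgroups $(n\widehat{\ZZ})^2$ and use the Chinese Remainder Theorem to refine an arbitrary clopen coset into some $G_p\subseteq U_k$. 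Your version avoids committing to a presentation and makes the density step completely explicit, at the cost of needing weak $1$-genericity in the ambient computable Polish space rather than in the more familiar sequence-space setting; the paper's tree picture does the opposite. One small point of attribution: you say the equivalence ``infinite index iff $z\in G_n$ for infinitely many $n$'' is ``as in the proof of the preceding proposition,'' but that proposition only establishes the forward direction; the direction you actually use is the easy converse, which holds because $[\widehat{\ZZ}:n\widehat{\ZZ}]=n$, so membership in $G_n$ for unbounded $n$ forces the index of $\langle z\rangle$ to be unbounded.
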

\begin{proof} Let $G_n = \ZZ / n! \ZZ$. Clearly $\hat \ZZ = \projlim G_n$. Furthermore, by Subsection~\ref{ss:computable completions}, the corresponding computable presentation of $\widehat \ZZ$ given by this inverse limit is computably isomorphic to the standard one obtained as the completion of $\ZZ$. 

We can now view an element of $w \in \hat \ZZ$ as written in factorial expansion 
\[ w = \sum_{n \ge 1}  a_n n! \]
where $0 \le a_n \le n$. In this way we can think of $w$ as a path $f = (a_1, a_2, \ldots)$ on the tree $T$ where every node at level $k$ (starting at level $0$ for the root) has $k+1$ children.  The path space  $[T]$ is homeomorphic to $\widehat \ZZ$ via a map  turning the uniform measure on $[T]$ into the Haar measure on $\widehat \ZZ$.

Given $z = (z^0, z^1)$, let $f^0, f^1$ be the corresponding paths. If there are infinitely many $n$ such that $f^0(n)= f^1(n) =0$, then the subgroup of $\widehat \ZZ$  topologically generated by $z$  has infinite index. 

A pair $(f,g) \in [T]^2$  is called \emph{weakly 1-generic} if it meets each dense $\SI 1$ set. In particular it meets the condition above.
Any weakly 1-generic is Kurtz random. Thus the pair $z$ corresponding to a weakly 1-generic pair of paths is Kurtz random and generates a subgroup of infinite index, as required.
\end{proof}

\subsection{Normal closure in general computable profinite groups}

Let $G$ be a topological  group. For $z \in G^e$, by $[z]_G$ one denotes the topological normal closure of the tuple $z$ in $G$.  We omit the subscript if it is clear from the context.  Jarden and Lubotzky also  consider almost everywhere theorems  of the form $[z]_G \cong L$ for a.e.\ $z$, where $L$ is an appropriate  profinite group; specifically, $L$ can be recognized by its finite quotients  among the closed normal subgroups of $G$ (e.g., $L$ could be free of a certain rank). We will show that weak 2-randomness of $z$ suffices.

\begin{lemma} Let $G$ be a computable profinite group. Fix $e \in \NN$ and a finite group $C$.  
The set $ \{ z \in G^e \colon \, [z] \text{ has $C$ as a quotient } \} $
is a $\Sigma^0_2$ subset of $G^e$. \end{lemma}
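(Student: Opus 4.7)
My plan is to reduce the condition ``$[z]_G$ has $C$ as a quotient'' to an effective test at each finite level of the inverse system. Since $G$ is computable profinite, I write $G = \varprojlim G_n$ with $(G_n, p_n)$ a uniformly computable inverse system of finite groups with surjective transition maps. For $z \in G^e$, write $z^{(n)} \in G_n^e$ for the level-$n$ projection, which is computable from any sufficiently long finite approximation to $z$. The first thing I would verify is the compatibility $\pi_n([z]_G) = [z^{(n)}]_{G_n}$, where $\pi_n : G \to G_n$ is the natural projection and $[z^{(n)}]_{G_n}$ denotes the (algebraic) normal closure of $z^{(n)}$ in the finite group $G_n$. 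Compactness of $G$ makes the image of the closed subgroup $[z]_G$ closed, and surjectivity of $\pi_n$ on conjugators makes the two normal closures coincide.

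The heart of the argument is the equivalence
\[ [z]_G \text{ admits } C \text{ as a continuous quotient} \iff \exists n \, \bigl( [z^{(n)}]_{G_n} \text{ admits } C \text{ as a quotient} \bigr). \]
For $(\Leftarrow)$, any surjection $\psi : [z^{(n)}]_{G_n} \twoheadrightarrow C$ precomposes with the continuous surjection $[z]_G \twoheadrightarrow [z^{(n)}]_{G_n}$ of the previous paragraph to give a continuous surjection $[z]_G \twoheadrightarrow C$. For $(\Rightarrow)$, if $\phi : [z]_G \twoheadrightarrow C$ is continuous, then its kernel is open (as $C$ is finite and discrete), hence contains $[z]_G \cap \ker \pi_n$ for some $n$; so $\phi$ factors through $[z^{(n)}]_{G_n}$, with the induced map still surjective.

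With this equivalence in hand, the set in question equals $\bigcup_n U_n$, where $U_n = \{ z \in G^e : [z^{(n)}]_{G_n} \text{ has } C \text{ as a quotient}\}$. Membership of $z$ in $U_n$ depends only on the finite datum $z^{(n)}$, so $U_n$ is clopen in $G^e$. Moreover, given $z^{(n)}$ one computes the finite normal closure $[z^{(n)}]_{G_n}$ by closing under conjugation and multiplication inside the finite group $G_n$, and then decides whether this finite group has $C$ as a quotient by enumerating candidate surjections onto $C$; all of this is uniform in $n$. Hence $\bigcup_n U_n$ is uniformly c.e.\ open, which is (more than) enough to conclude $\SI 2$. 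The step I would be most careful with is the projection compatibility $\pi_n([z]_G) = [z^{(n)}]_{G_n}$ together with the factorization of a continuous quotient through some finite level; these are the standard profinite facts that convert the original question into an effective one, but they must be invoked with the computable presentation in mind so that the witnessing level $n$ is recognised uniformly.
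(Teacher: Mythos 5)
Your proof is correct, and the key step where you diverge from the paper is the identification $\pi_n([z]_G) = [z^{(n)}]_{G_n}$. The paper instead introduces the sets
$U^z_{n,t} = \{ v \in G_n : \exists w \in [z\uhr t]_{G_t}\, (w\uhr n = v)\}$,
observes that $\bigcap_t U^z_{n,t}$ is the projection of $[z]_G$ into $G_n$, and writes the condition ``the image of $[z]_G$ in $G_n$ has $C$ as a quotient'' in the $\exists n \,\exists s\,\forall t\ge s$ shape, yielding $\Sigma^0_2$. Your observation uses the surjectivity of the transition maps (which the paper's Definition of \emph{computable} profinite group guarantees and which makes each $\pi_n:G\to G_n$ onto) to show that the normal closure already stabilises at level $n$: for a surjection $p:G_t\twoheadrightarrow G_n$ with $p(z\uhr t)=z\uhr n$, one has $p([z\uhr t]_{G_t})=[z\uhr n]_{G_n}$, so in fact $U^z_{n,t}=[z^{(n)}]_{G_n}$ for all $t\ge n$ and the $\exists s\,\forall t$ is vacuous. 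With that collapse your set is a union of uniformly decidable clopen sets, i.e.\ effectively open, so you actually get $\Sigma^0_1$, which is stronger than the stated $\Sigma^0_2$. The paper's more cautious formulation would also go through when the transition maps are not assumed onto (the co-r.e.\ case in the paper's Definition), where $U^z_{n,t}$ really can shrink with $t$; under the hypothesis of the lemma as stated, though, your route is cleaner and gives more.

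One small point worth making explicit in your writeup: the factorisation in the $\Rightarrow$ direction uses that the subgroups $[z]_G\cap\ker\pi_n$ form a neighbourhood basis of the identity in the closed subgroup $[z]_G$, so that the open kernel of a continuous map to the finite discrete group $C$ must contain one of them; and the $\supseteq$ inclusion in $\pi_n([z]_G)=[z^{(n)}]_{G_n}$ needs $\pi_n$ onto so that every conjugator in $G_n$ lifts to $G$. You flag both of these as the delicate steps, and both are indeed exactly where surjectivity is used.
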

  
  \begin{proof}  According to Definition~\ref{def: computable profinite} $G = \projlim_n G_n$ for a computable  inverse system $\la G_n, p_n\ra$   of finite groups with onto maps $p_n$.  By $g\uhr n$ we denote the projection of $g\in G$ into $G_n$; similar notation applies to $g \in G_t$ for $t \ge n$. For $g\in G$, we have 
  
  \bc $g \in [z]_G \lra \forall n \, g \uhr n \in [z \uhr n]_{G_n}$. \ec

 We define  the subset of $G_n$ of elements that have a preimage in  $[z \uhr t]_{G_t}$: \bc  $U^z_{n,t} = \{ v \in G_n  \colon \ex w \in G_t \, [ w  \in [z \uhr t]_{G_t}  \, \land \, w\uhr n  = v]\}$ \ec 
   Note that  $\bigcap_t U^z_{n,t}$ is the projection of $[z]_G$ into $G_n$.
  
  Note that $[z]_G$ has $C$ as a quotient iff for some $n$, the image  of $[z]_G$ into $G_n$ has $C$ has a quotient. This is equivalent to the condition 
  
  \bc $\ex n \ex s \ge n \fa t \ge s [ U^z _{n,t} \text{ is a subgroup of } G_n \text{ with $C$ as a quotient}]$, \ec which is in $\Sigma^0_2$ form as required.
  \end{proof}

Suppose that $L$ is a profinite group  that  can be described  by its finite quotients among the closed normal subgroups of $G$.   It follows that any  theorem  of the form ``$[z]_G \cong L$ for almost every $z$"   holds for any weakly 2-random $z$.  

\begin{cor} Let $G = \widehat F_\omega$. For each $e$ and each weakly 2-random $z \in G^e$, the topological normal closure $N$ of $z$ is isomorphic to $\widehat F_\omega$.
\end{cor}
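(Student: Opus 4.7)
The plan is to invoke the general principle stated in the paragraph preceding the corollary: if a profinite group $L$ is characterised among closed normal subgroups of $G$ by its set of finite quotients, and a classical theorem gives $[z]_G \cong L$ for almost every $z$, then the same conclusion holds for every weakly 2-random $z$. I would apply this with $L = \widehat F_\omega$, which is characterised among closed normal subgroups of $\widehat F_\omega$ by the fact that every finite group occurs as a quotient. The classical input is the Jarden--Lubotzky theorem asserting that for almost every $z \in (\widehat F_\omega)^e$ the topological normal closure $[z]_G$ is isomorphic to $\widehat F_\omega$.

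To make this work effectively I would first fix a computable enumeration $(C_i)_{i \in \NN}$ of (presentations of) all finite groups up to isomorphism; such an enumeration is standard via multiplication tables. For each $i$, set
\[
B_i = \{ z \in G^e : C_i \text{ is not a quotient of } [z]_G \}.
\]
By the preceding lemma, applied uniformly in the target group $C_i$, the complement $A_i = G^e \setminus B_i$ is $\SI 2$ uniformly in $i$, so $B_i$ is $\PI 2$ uniformly in $i$. The Jarden--Lubotzky theorem yields $\mu(A_i) = 1$, and hence $B_i$ is null for every $i$.

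Now since $z \in G^e$ is weakly 2-random, $z$ avoids every $\PI 2$ null set and in particular avoids each $B_i$. Thus every finite group $C_i$ is a quotient of $[z]_G$, and by the characterisation of $\widehat F_\omega$ recalled above we conclude $[z]_G \cong \widehat F_\omega$.

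The step I expect to require the most care is the uniformity of the preceding lemma in the target finite group $C$: one needs the property ``the image of $[z\uhr t]_{G_t}$ in $G_n$ admits $C$ as a quotient'' to be decidable on the given finite-group data, uniformly in $(n,t,C)$. This is routine once finite groups are coded by multiplication tables (so isomorphism and the existence of a given quotient are both decidable on finite data), but it is the only effectivity check the argument really hinges on; the rest of the proof is a direct bookkeeping application of the definition of weak 2-randomness.
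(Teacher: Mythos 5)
Your proposal is correct and follows essentially the same route the paper takes: invoke the preceding lemma to place ``$C$ is a quotient of $[z]_G$'' in $\Sigma^0_2$, appeal to the Jarden--Lubotzky almost-everywhere statement to make the complement null, conclude via weak 2-randomness, and finish with the Fried--Jarden characterisation of $\widehat F_\omega$ among closed normal subgroups of itself by the property that every finite group occurs as a quotient. One small remark: the uniformity in the index $i$ that you flag as the delicate step is not actually needed here, since weak 2-randomness is defined by avoiding \emph{every} null $\Pi^0_2$ class, not merely those in some effectively given list; applying the definition to each $B_i$ separately already suffices, so the only effectivity input genuinely used is the single (non-uniform) instance of the lemma for each fixed finite group $C$.
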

To see this, note that by \cite[Th.\ 25.7.3(b)]{Fried.Jarden:06}, $N \cong \widehat F_\omega$ iff each finite group is a quotient of $N$. The $S$-rank function  $r_N(S)$ occuring there, for a finite simple group $S$,  is  defined as follows (see Section 24.9): let $M_G(S)$ be the intersection of all open normal subgroups $X$ of $N$ such that $N/X \cong S$. Then $M_G(S)$ is closed normal, and $G/M_G(S) \cong S^m$ for some cardinal $m$; write $m = r_G(S)$. If $m$ is finite, it is simply   the number of normal open subgroups $X$ of $N$ such that $N/X \cong S$.

Similarly, from \cite[Cor.\ 25.7.6]{Fried.Jarden:06} we obtain a variant when $G = \hat F_m$ has finite rank. Note that any subgroup of finite index is open, and hence free of finite rank; see e.g.\ \cite[Th.3.6.2]{Ribes.Zalesski:00}, which is a profinite version of Schreier's theorem on the rank of finite index subgroups of  discrete free groups of finite rank.
\begin{cor} Let $G = \widehat F_m$ where $m$ is finite. For each $e$ and each weakly 2-random $z \in G^e$, if the topological normal closure   of $z$ has infinite index in $G$, then  it is  isomorphic to $\widehat F_\omega$.
\end{cor}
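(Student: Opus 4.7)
The plan is to adapt the scheme used for the previous corollary, replacing the classical input by Corollary~25.7.6 of \cite{Fried.Jarden:06}. That corollary says that in $G = \hat F_m$ with $m$ finite, for Haar-almost every $z \in G^e$, if $[z]_G$ has infinite index in $G$ then $[z]_G \cong \hat F_\omega$; by Theorem~25.7.3(b), the latter is equivalent to the statement that every finite group $C$ is a quotient of $[z]_G$. The strategy is to split the exceptional set into countably many null $\Pi^0_2$ pieces, one for each $C$, and then invoke the fact that weakly 2-random points in $G^e$ avoid all null $\Pi^0_2$ subsets.

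For each isomorphism type of finite group $C$, set
\[
E_C = \{z \in G^e : [z]_G \text{ has infinite index in } G \text{ and } C \text{ is not a quotient of } [z]_G\}.
\]
The classical statement above yields $\mu(E_C) = 0$. What remains is to show that each $E_C$ is $\Pi^0_2$. The second conjunct, ``$C$ is not a quotient of $[z]_G$'', is the complement of the $\Sigma^0_2$ set supplied by the preceding lemma, hence already $\Pi^0_2$.

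The core step is verifying that ``$[z]_G$ has infinite index in $G$'' is itself a $\Pi^0_2$ condition in $z$. With the inverse system $\la G_n, p_n \ra$ from the lemma, one has $G/[z]_G = \varprojlim_n G_n/\pi_n([z]_G)$ and $\pi_n([z]_G) = \bigcap_t U^z_{n,t}$. A short argument using that the preimage of a normal subgroup under $p_{t+1,t}$ is normal shows that $t \mapsto U^z_{n,t}$ is non-increasing, so because $G_n$ is finite the intersection is attained at some finite stage. Consequently
\[
[z]_G \text{ has infinite index in } G \ \LR \ \forall N\, \exists n\, \exists t \ge n\ \bigl([G_n : U^z_{n,t}] \ge N\bigr),
\]
where the bracketed condition depends only on $z \uhr t$ and is decidable. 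This is a $\Pi^0_2$ description, and intersecting with the $\Pi^0_2$ condition on $C$ leaves $E_C$ in $\Pi^0_2$.

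To conclude: a weakly 2-random $z$ avoids every null $\Pi^0_2$ set, hence avoids each $E_C$. Assuming $[z]_G$ has infinite index in $G$, the exclusion $z \notin E_C$ for each finite group $C$ forces every such $C$ to be a quotient of $[z]_G$, and Theorem~25.7.3(b) gives $[z]_G \cong \hat F_\omega$. The main obstacle will be the careful $\Pi^0_2$ analysis of ``infinite index''; it relies on the monotonicity of $U^z_{n,t}$ in $t$ and the finiteness of $G_n$ to translate a condition about the inverse limit into a stage-wise decidable one. Once that is secured, the rest of the argument follows routinely from the classical almost-everywhere statement and the treatment given for the preceding corollary.
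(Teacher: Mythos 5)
Your proposal is correct and follows essentially the same route the paper has in mind: decompose the exceptional set into one null $\Pi^0_2$ piece per finite group $C$, use the preceding lemma to control the ``$C$ is not a quotient'' part, and appeal to Corollary~25.7.6 of Fried--Jarden for the null-ness. The paper itself disposes of this corollary with a ``similarly'' and a pointer to the profinite Schreier theorem, so the main value of your write-up is that it spells out the one detail the paper elides: that the hypothesis ``$[z]_G$ has infinite index in $G$'' must itself be packaged into the $\Pi^0_2$ set for the weak $2$-randomness argument to apply, and that this hypothesis is indeed $\Pi^0_2$. Your reduction via $U^z_{n,t}$ and the finiteness of $G_n$ is sound and is exactly the kind of bookkeeping that is needed.

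One small observation, which does not affect correctness: because the structure maps $p_t \colon G_{t+1}\to G_t$ of a computable profinite group are onto, the image of a normal closure is the normal closure of the image, so $p_t\bigl([z\uhr {t+1}]_{G_{t+1}}\bigr) = [z\uhr t]_{G_t}$. Hence $U^z_{n,t}$ is in fact \emph{constant} in $t\ge n$ and equals $[z\uhr n]_{G_n}$, not merely non-increasing. This means that $\pi_n([z]_G)=[z\uhr n]_{G_n}$ depends only on $z\uhr n$, so both ``$C$ is a quotient of $[z]_G$'' (the paper's lemma) and ``$[z]_G$ has infinite index'' are actually one quantifier simpler than stated: $\Sigma^0_1$ and $\Pi^0_1$ respectively. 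Your $\Pi^0_2$ bound is therefore correct but not tight; nothing in the argument is harmed since $\Pi^0_1\subseteq\Pi^0_2$ and weak $2$-randoms a fortiori avoid null $\Pi^0_1$ sets.
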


       \newcommand{\UT}{\ensuremath{\mathrm{UT}_3}}
\newcommand{\Zp}{\ensuremath{\mathbb Z_p}}
\newcommand{\UTp}{\UT(\Zp)}

\section{Describing a profinite structure by a single first-order sentence}

  The following is related to discussions of Nies with M.\ Aschenbrenner and T.\   Scanlon late 2016 at UCLA and UC Berkeley.
They discussed a question that had come up during Nies' visit at Hebrew University earlier that year when talking to Lubotzky and Meiri: can the notion of quasi-finite axiomatisability (QFA, see \cite{Nies:DescribeGroups}) be meaningfully extended to the setting of topological algebra?

 Suppose $\mathcal C$ is  a class  of topological algebraic  structures    in a finite signature $S$. For instance,  $\+ C$ could be the class of profinite separable rings, or the class of profinite separable groups.  \begin{definition} A first-order sentence $\phi$  in $L(S)$ \emph{describes} a structure  $M \in \mathcal C$ if  $M$ is up to topological isomorphism the unique structure in $\+ C$ that satisfies  $\phi$. A structure  $M$ is finitely axiomatisable for $\+ C$ if there is such a $\phi$. 
 \end{definition}
 

We will show that  $\UT(\Zp)$ is finitely axiomatisable within the class of separable profinite groups.
  
  Recall the commutator $[x,y] = xy x^{-1}y^{-1}$. A group $G$ is nilpotent of class 2 (nilpotent-2 for short) if it satisfies the law $[[x,y],z]= 1$. Equivalently $G' \sub C(G)$. This implies distributivity $[uv,w] = [u,w][v,w]$,  and  $[u^n,w] = [u,w]^n$ for any $n \in \ZZ$.  
 
 \subsection{Rings}  
 Throughout let $p$ be a fixed prime number.  Unless otherwise noted,  rings will be commutative and  with 1.
 \begin{thm}  \label{thm: QFA rings}  Let $\+ C$ be the class of profinite rings.
 
 \begin{itemize} \item[(i)] $\ZZ_p$ is finitely axiomatisable for $\+ C$.
 \item[(ii)] $\widehat{{\mathbb Z}}$ is not finitely axiomatisable for $\+ C$.  \end{itemize} \end{thm}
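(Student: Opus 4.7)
For part (i), the plan is to take $\phi_p$ to be the conjunction of the following axioms: (a) $R$ is a commutative ring with $1 \neq 0$; (b) every element is a unit or a multiple of $p$, namely $\forall x(\exists y\, xy = 1 \vee \exists y\, x = py)$; (c) $p$ is not a zero divisor, $\forall x(px = 0 \to x = 0)$; (d) $p$ is not a unit, $\neg\exists y\, py = 1$; and (e) $\forall x \bigvee_{n=0}^{p-1} \exists y\,(x = n + py)$, saying $R/pR$ has at most $p$ elements. Plainly $\ZZ_p \models \phi_p$. For the converse, given a profinite $R \models \phi_p$, I would argue: (d), (e), and the ring map $\ZZ \to R$ force $R/pR \cong \mathbb{F}_p$; combined with (b) and (d) this makes $R$ local with unique maximal ideal $pR$; since $p$ is not a zero divisor, multiplication by $p^{n-1}$ induces additive isomorphisms $R/pR \cong p^{n-1}R/p^nR$, so by induction $|R/p^nR|=p^n$; the natural map $\ZZ \to R/p^nR$ then factors to an isomorphism $\ZZ/p^n\ZZ \cong R/p^nR$ (if the kernel were $p^k\ZZ$ with $k<n$, then $p^k \in p^nR$ would force $1 \in pR$, contradicting (d)). Finally, for any open ideal $I$ of $R$, the finiteness of $R/I$ forces $p^k \equiv p^\ell \pmod{I}$ with $k<\ell$; since $1-p^{\ell-k}$ is a unit by local-ness, $p^kR \subseteq I$. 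Therefore $\{p^nR\}$ is cofinal in the open ideals, and $R \cong \varprojlim_n R/p^nR \cong \ZZ_p$ as topological rings.

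For part (ii), the plan is a Feferman--Vaught argument showing that no single ring sentence pins down $\widehat{\ZZ} = \prod_p \ZZ_p$ among profinite rings. Suppose toward a contradiction that such a sentence $\phi$ exists. Feferman--Vaught for direct products yields finitely many ring sentences $\psi_1, \dots, \psi_k$ and a cutoff $N$ such that the truth of $\phi$ in any direct product $\prod_{i \in I} A_i$ depends only on the function $T \mapsto \min(N, |\{i \in I : A_i \text{ realizes the } \psi\text{-type } T\}|)$ for $T \subseteq \{1,\dots,k\}$. Since only $2^k$ types are possible, pigeonhole on the infinite set of primes produces some type $t^*$ realized by infinitely many primes. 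I would pick a prime $q$ with $\ZZ_q$ of type $t^*$ and set $R := \widehat{\ZZ} \times \ZZ_q$, a profinite ring under the product topology. As a direct product, $R$ has one extra factor of type $t^*$ compared to $\widehat{\ZZ}$, but the $t^*$-count for $\widehat{\ZZ}$ was already $\geq N$, so the cutoff data is unchanged; hence $R \models \phi$.

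To obtain the contradiction, I would show $R \not\cong \widehat{\ZZ}$ even as abstract rings: the first-order-definable set of idempotents $e$ such that $eR$ is a local ring with residue field $\mathbb{F}_q$ (definable by relativising the axioms of part (i) to the unital subring $eR$) has exactly one element in $\widehat{\ZZ}$, namely the projector onto its $\ZZ_q$ factor, but two in $R$. This contradicts the assumption that $\phi$ characterises $\widehat{\ZZ}$. The main obstacle will be securing the right form of Feferman--Vaught for arbitrary-index direct products of rings, with truth depending only on cardinalities cut off at $N$; the classical theorem (e.g.\ as presented in Hodges's textbook) suffices, but one must check the reduction carefully. A self-contained alternative would replace that step with a direct Ehrenfeucht--Fra\"iss\'e argument, producing for each quantifier rank $r$ a winning Duplicator strategy on $(\widehat{\ZZ}, \widehat{\ZZ} \times \ZZ_q)$ by choosing $q$ in a sufficiently large class of primes ``$r$-elementarily interchangeable'' with infinitely many other primes.
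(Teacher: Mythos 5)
Your proposal is correct, but it diverges from the paper on both parts, and the comparison is instructive.

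\textbf{Part (i).} The paper characterises $\ZZ_p$ by invoking the model theory of valued fields: $\ZZ_p$ is the unique profinite Henselian valuation ring with residue field $\mathbb F_p$, a $\mathbb Z$-group value group, and $p$ generating the maximal ideal, with the observation that compactness makes the Henselian and completeness axioms automatic. Your approach is entirely elementary: the five axioms (a)--(e) pin down that $R$ is local with maximal ideal $pR$, that $R/p^nR \cong \ZZ/p^n\ZZ$ for every $n$ (via the no-zero-divisor axiom and a counting argument), and that $\{p^nR\}$ is cofinal among open ideals (using compactness to force $p^k \equiv p^\ell \pmod I$ and then localness to conclude $p^kR \subseteq I$). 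Your argument is longer but fully self-contained, avoiding any appeal to Ax--Kochen/Ershov-style machinery; it is worth keeping as a concrete alternative.

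\textbf{Part (ii).} Both you and the paper use Feferman--Vaught together with pigeonhole on the primes, but your construction creates an extra burden. The paper keeps the index set fixed: it finds $\ell \ne q$ with $\ZZ_\ell$ and $\ZZ_q$ satisfying the same $\psi_j$'s, replaces the $\ell$-th factor of $\prod_p \ZZ_p$ by $\ZZ_q$, and observes that the sets $X_j \subseteq I$ are literally unchanged, so $\theta(X_1,\dots,X_n)$ has the same truth value in $\mathcal P(I)$ and the basic Feferman--Vaught theorem applies verbatim. Non-isomorphism is then witnessed by the one-liner that $\ell$ is a unit in the modified product but not in $\widehat{\ZZ}$. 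Your construction $R = \widehat\ZZ \times \ZZ_q$ changes the index set from $P$ to $P \cup \{*\}$, so the Boolean-algebra side of Feferman--Vaught changes, and you need the stronger ``threshold'' refinement (truth depends only on atom-type counts cut off at some $N$) to conclude that adding one more atom of an already-infinitely-realised type leaves $\theta$'s truth value unchanged. You correctly flag this as the delicate point; it is provable but is additional work that the paper's same-index-set trick avoids entirely. Your idempotent-based witness for $R \not\cong \widehat\ZZ$ is also correct but heavier than the paper's unit observation. If you want to keep the ``add a factor'' construction, you must either supply the threshold lemma or replace it with the paper's ``swap a factor'' construction, which is strictly simpler.
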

\begin{proof}  

(i) Recall that a local ring $R$ with maximal ideal $m$ is called \emph{Henselian} if Hensel's lemma holds:  if $P$ is a monic polynomial in $R[x]$, then any factorization of its image  in $(R/m)[x]$ into a product of coprime monic polynomials can be lifted to a factorization  of $P$ in $R[x]$. The ring $\mathbb Z_p$  is characterised by saying that it is a Henselian valuation ring with residue field ${\mathbb F}_p$ and a Z-group as valuation group with $p$ generating the maximal ideal.   To express this by a first order sentence, note that a compact valuation ring is complete, so  being Henselian follows from completeness, and we need not include any such axioms.  Likewise, we do not need to describe the value group.   

(ii)  This follows from Feferman-Vaught.   As a ring, $\widehat{{\mathbb Z}} \cong \prod_p {\mathbb Z}_p$.  Then for every sentence $\phi$ in the language of rings there is a finite sequence of sentences $\psi_1, \ldots, \psi_n$ in the language of rings and a formula $\theta(x_1,\ldots,x_n)$ so that for any index set $I$ and any family of rings $R_i$ indexed by $I$ if we set $X_j := \{ i \in I ~:~ R_i \models \psi_j \}$, then \bc $\prod_{i \in I} R_i \models \phi$ if and only if ${\mathcal P}(I) \models \theta(X_1,\ldots,X_n)$.    \ec Consider $\phi$ a supposed QFA formula.   By pigeon hole principle, we can find two distinct primes $\ell \neq q$ so that for all $j \leq n$ we have ${\mathbb Z}_\ell \models \psi_j \Longleftrightarrow {\mathbb Z}_q \models \psi_j$.   Define $R_p := {\mathbb Z}_p$ if $p \neq \ell$ and $R_\ell := {\mathbb Z}_q$.   Then $R := \prod R_p \models \phi$ but $R \not \equiv \widehat{{\mathbb Z}}$ as, for example, $\ell$ is a unit in $R$ but not in $\widehat{{\mathbb Z}}$.
\end{proof}

 \subsection{QFA profinite groups}  
 Given a ring $R$ let $\UT(R)$ denote the set of matrices of the form $A=\left(  \begin{matrix}
   1 & \beta &  \gamma \\
   0 &  1 &  \alpha \\
   0 &   0 &  1   \end{matrix} \right)$ with entries in $R$. For $R= \ZZ$ the standard generators of $\UT(R)$ are $$a = \left(  \begin{matrix}
   1 & 0 &  0 \\
   0 &  1 &  1 \\
   0 &   0 &  1   \end{matrix} \right), b = \left(  \begin{matrix}
   1 & 1 &  0 \\
   0 &  1 &  0 \\
   0 &   0 &  1   \end{matrix} \right).$$ 
   We will  write $q = [a,b]$. Note that $q= \left(  \begin{matrix}
   1 & 0 &  1 \\
   0 &  1 &  0 \\
   0 &   0 &  1   \end{matrix} \right)$.
   For $R= \Zp$ these are topological generators. 
   
 We will show that $\UT(\Zp)$ is finitely axiomatisable within the class of separable profinite groups.
First  we need some preliminaries on $\UT(\Zp)$.
   The \emph{pro-$p$ completion} of a group $G$ is the inverse limit $\varprojlim G/N$ with the canonical projections, where $N$ ranges over normal subgroups of index a power of $p$. If $G$ is f.g.\ nilpotent,  then  we can  let $N$ range  over verbal subgroups $G^{p^s}$, $s \in \NN$ (as they have index a power of $p$). 
   
   We will show that $\UTp$ is the pro-$p$ completion of $\UT(\ZZ)$,  which will imply that it is    the   free pro-$p$ nilpotent-2 group on free generators $a,b$.  
    
   Let now $R= \ZZ$. Every matrix $A$ as above can be uniquely written as $A = (\alpha, \beta, \gamma) = a^\alpha b^\beta q^\gamma$ where $\alpha, \beta, \gamma  \in R$. We have  
   \begin{equation} \label{eqn:1}  (\alpha, \beta, \gamma) (\alpha', \beta', \gamma') = (\alpha  + \alpha', \beta + \beta', \gamma + \gamma' + \alpha'\beta)\end{equation}
   and for any $r \in \ZZ$,   
 \begin{equation} \label{eqn:2}   (\alpha, \beta, \gamma) ^r =  (r\alpha, r  \beta, r (\gamma+ \alpha\beta)\end{equation}
 
 The following is well-known; see e.g.\ \cite{Karg.Merz:79}.
 \begin{fact} \label{fact: KM exercise} $\UT(\ZZ)$ is the free abstract nilpotent-2 group on free  generators $a, b$.  \end{fact}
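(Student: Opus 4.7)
The plan is to verify that $\UT(\ZZ)$ satisfies the universal property defining the free nilpotent-$2$ group on two generators $a$ and $b$: given any nilpotent-$2$ group $H$ and elements $x, y \in H$, the assignment $a \mapsto x$, $b \mapsto y$ extends uniquely to a group homomorphism $\UT(\ZZ) \to H$.

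First I would use (\ref{eqn:1}) to check that $q = (0,0,1)$ commutes with every $(\alpha', \beta', \gamma')$, so $q \in Z(\UT(\ZZ))$. Since $\UT(\ZZ)$ is generated by $a$, $b$ together with their commutator $q$, and $q$ is central, the derived subgroup of $\UT(\ZZ)$ lies in the centre. Hence $\UT(\ZZ)$ is nilpotent of class at most $2$, and of class exactly $2$ because $q \neq 1$.

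Second, iterating (\ref{eqn:1}) and (\ref{eqn:2}), every element of $\UT(\ZZ)$ admits the normal form $a^\alpha b^\beta q^\gamma$ with $\alpha,\beta,\gamma \in \ZZ$, and uniqueness of this presentation follows from the triple coordinate description. In particular $\{a,b\}$ generates, so at most one homomorphism extending $a \mapsto x$, $b \mapsto y$ can exist.

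For existence, I would set $\phi(a^\alpha b^\beta q^\gamma) := x^\alpha y^\beta [x,y]^\gamma$ and verify that $\phi$ is a homomorphism. The key point is that because $H$ is nilpotent-$2$, the commutator $[x,y]$ lies in $Z(H)$ and the distributivity identities $[uv,w] = [u,w][v,w]$ and $[u^n,w] = [u,w]^n$ (for $n \in \ZZ$) hold in $H$. Together they imply a commutation relation of the form $y^\beta x^{\alpha'} = x^{\alpha'} y^\beta [x,y]^{\pm \alpha'\beta}$, the sign being fixed by the commutator convention. Using this to move $x^{\alpha'}$ past $y^\beta$ in the product $\phi(a^\alpha b^\beta q^\gamma)\,\phi(a^{\alpha'} b^{\beta'} q^{\gamma'})$ and collecting into normal form yields exactly the multiplication rule (\ref{eqn:1}), so $\phi$ preserves multiplication.

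The only real obstacle is a bookkeeping one: the sign convention for $[x,y]$ must be chosen consistently with the orientation of (\ref{eqn:1}) so that the exponent $\pm\alpha'\beta$ in the commutation relation in $H$ matches the summand $\alpha'\beta$ in the third coordinate of (\ref{eqn:1}); once that is aligned, the verification is mechanical. Conceptually, the fact is simply the observation that $\UT(\ZZ)$ is the quotient $F_2/\gamma_3(F_2)$, the standard model of the free nilpotent-$2$ group of rank $2$, with the isomorphism witnessed by the normal form and centrality of $q$.
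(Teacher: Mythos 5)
Your proposal is correct and follows essentially the same route as the paper's proof: both exploit the unique normal form $a^\alpha b^\beta q^\gamma$ in $\UT(\ZZ)$ and the fact that the multiplication rule (\ref{eqn:1}) is a consequence of the nilpotent-$2$ laws, so the map $a^\alpha b^\beta q^\gamma \mapsto x^\alpha y^\beta [x,y]^\gamma$ into an arbitrary nilpotent-$2$ group is a well-defined homomorphism. You merely make explicit a few steps the paper leaves implicit (that $q$ is central so $\UT(\ZZ)$ itself is nilpotent-$2$, and the sign bookkeeping in the commutation identity).
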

\begin{proof} Suppose $G$ is a nilpotent-2 group generated by $u,v$.  Let $w = [u,v]$. Every element of $G$ can be expressed (not necessarily uniquely) in the form $u^\alpha v^\beta w^\gamma$. Since (\ref{eqn:1}) applies also in $G$,  $a \mapsto u, b \mapsto v$ extends to a group homomorphism.  \end{proof}


 \begin{fact} \label{fact:compl} $\UTp$ is the pro-$p$ completion of $\UT(\ZZ)$.   \end{fact}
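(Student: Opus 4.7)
The plan is to prove that the natural map $\pi \colon \widehat{G}_p \to \UTp$ arising from the componentwise inclusion $G := \UT(\ZZ) \hookrightarrow \UTp$ (induced by $\ZZ \hookrightarrow \Zp$) is a topological isomorphism, where $\widehat{G}_p$ denotes the pro-$p$ completion of $G$. Since $\widehat{G}_p$ will be compact and $\UTp$ is Hausdorff, it suffices to show that $\pi$ exists, is continuous, and is bijective.

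First I would verify that $H := \UTp$ is itself a pro-$p$ group containing $G$ densely. Using equation~(\ref{eqn:1}), for each $s$ the subset $K_s := \{(\alpha,\beta,\gamma) : \alpha,\beta,\gamma \in p^s\Zp\}$ is a closed normal subgroup of $H$, with quotient $H/K_s \cong \UT(\ZZ/p^s\ZZ)$, a finite $p$-group of order $p^{3s}$. Since $\bigcap_s K_s = 1$, the family $\{K_s\}$ forms a basis of open neighbourhoods of the identity, exhibiting $H$ as an inverse limit of finite $p$-groups. Density of $G$ in $H$ follows from the density of $\ZZ$ in $\Zp$ applied coordinatewise. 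Hence the inclusion $G \hookrightarrow H$ is continuous for the pro-$p$ topology on $G$ and, by the universal property of $\widehat{G}_p$, extends to a continuous homomorphism $\pi$. The image of $\pi$ is closed (continuous image of a compact space) and contains the dense subgroup $G$, so $\pi$ is surjective.

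The heart of the proof is establishing injectivity. Set $N_s := G \cap K_s = \UT(p^s\ZZ)$, a normal subgroup of $G$ of $p$-power index. The inverse system $\{G/N_s\}_s$ has inverse limit $H$, and injectivity of $\pi$ reduces to the claim that this subsystem is cofinal inside the full system of normal subgroups of $p$-power index defining $\widehat{G}_p$. So, given any normal subgroup $N \trianglelefteq G$ with $[G:N] = p^k$, I need to produce $s$ with $N_s \subseteq N$. Since $G/N$ is a $p$-group of exponent dividing $p^k$, Lagrange gives $G^{p^k} \subseteq N$, so it suffices to show $N_k \subseteq G^{p^k}$.

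For this last step, I would use the explicit formula~(\ref{eqn:1}) to check that the three $p^k$-th powers $a^{p^k} = (p^k,0,0)$, $b^{p^k} = (0,p^k,0)$, $q^{p^k} = (0,0,p^k)$ generate $N_k = \UT(p^k\ZZ)$: iterating~(\ref{eqn:1}) with two coordinates zero at a time produces all triples $(\alpha p^k, \beta p^k, \gamma p^k)$ with $\alpha,\beta,\gamma \in \ZZ$. Since each generator lies in $G^{p^k}$, this yields $N_k \subseteq G^{p^k} \subseteq N$, proving cofinality. The main technical wrinkle I anticipate is the usual one for the prime $p=2$, where binomial coefficients such as $\binom{2^s}{2}$ are not as divisible as their odd-$p$ counterparts; this affects the precise description of $G^{2^s}$ (it will contain elements outside $\UT(2^s\ZZ)$ arising from $(ab)^{2^s}$) but does not affect the inclusion $N_k \subseteq G^{2^k}$ used above, so the cofinality argument is uniform in $p$. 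Combining surjectivity with injectivity, $\pi$ is a continuous bijective homomorphism between compact Hausdorff groups, hence a topological isomorphism.
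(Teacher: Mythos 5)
Your proof is correct, and in fact more careful than the paper's at the crucial technical step. Both arguments turn on the same cofinality claim, namely that $\UT(p^s\ZZ) \subseteq G^{p^s}$ for $G = \UT(\ZZ)$, so that the inverse systems $\{G/\UT(p^s\ZZ)\}_s$ and $\{G/G^{p^s}\}_s$ yield the same limit; you differ from the paper only in how that inclusion is verified. The paper exhibits an arbitrary $(\alpha,\beta,\gamma)\in\UT(p^s\ZZ)$ as a single $p^s$-th power $(\alpha',\beta',\gamma')^{p^s}$ via~(\ref{eqn:2}). But~(\ref{eqn:2}) is not consistent with~(\ref{eqn:1}): squaring via~(\ref{eqn:1}) gives $(2\alpha,2\beta,2\gamma+\alpha\beta)$ rather than $(2\alpha,2\beta,2\gamma+2\alpha\beta)$, and the correct exponentiation law is $(\alpha,\beta,\gamma)^r = \bigl(r\alpha,\, r\beta,\, r\gamma + \binom{r}{2}\alpha\beta\bigr)$. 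For odd $p$ this is harmless since $p^s \mid \binom{p^s}{2}$, but for $p=2$ one has $\binom{2^s}{2} = 2^{s-1}(2^s-1) \not\equiv 0 \pmod{2^s}$, so the paper's candidate $\gamma'$ need not be an integer; indeed $(2^s,2^s,0)$ is not a $2^s$-th power in $\UT(\ZZ)$. Your generation argument --- showing via~(\ref{eqn:1}) that $a^{p^k}$, $b^{p^k}$, $q^{p^k}$ generate $\UT(p^k\ZZ)$, each being itself a $p^k$-th power --- sidesteps this uniformly in $p$, and you correctly flag and defuse the $p=2$ wrinkle. You also spell out the topological bookkeeping (existence, continuity and surjectivity of $\pi$; reduction of injectivity to cofinality; the step from $[G:N]=p^k$ to $G^{p^k}\subseteq N$) that the paper's terse argument leaves implicit.
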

\begin{proof} In the setting of topological  rings,   $\Zp = \varprojlim_s \ZZ / p^s \ZZ$. Therefore \bc  $\UTp = \varprojlim_s  \UT(\ZZ / p^s \ZZ)$. \ec Write $G= \UT(\ZZ)$. Let $ \UT(p^s \ZZ)$  denote the normal subgroup of $G$ consisting of matrices with entries off the main diagonal divisible by $p^s$. Then $|G :  \UT(p^s \ZZ)| = p^{3s}$, so it suffices to show  that  for the verbal subgroups $G_s = G^{p^s}$  we have  \bc $G_s  \ge \UT(p^s \ZZ)$. \ec   To this end, suppose we are given $(\alpha, \beta, \gamma) \in \UT(p^s \ZZ)$. Let $\alpha' = p^{-s} \alpha, \beta ' = p^{-s} \beta$ and  $\gamma' = p^{-s} \gamma - \alpha'\beta'$. Then $(\alpha', \beta', \gamma')^{p^s} = (\alpha, \beta, \gamma)$  by (\ref{eqn:2}). 
\end{proof}

 As a consequence, $\UTp$ can be seen as a $\Zp$-module: for $x \in \Zp$ and $g \in \UTp$ define $g^x = \lim_n g^{x\, \uhr n}$ where $x \uhr n$ denotes the last $n$ digits of $x$.  This limit exists because $p^s \mid x$ implies the projection of $g^{x\, \uhr n}$ 
 in $ \UT(\ZZ / p^s \ZZ)$ vanishes. 
 
 More generally, let $\widehat \ZZ$ denote the free profinite group of rank $1$. For any profinite group $G$ $g \in G$ and $\lambda \in \widehat \ZZ$ one can define exponentiation $g^\lambda$ as $\phi(\lambda)$ where $\phi \colon \widehat \ZZ \to G$ is the unique homomorphism with $\phi(1) = g$.  The usual laws of exponentiation hold. See \cite[Section 4.1]{Ribes.Zalesski:00}.
 
  The following fact should be  well-known,  but is somewhat  hard to find in the literature.
 
 \begin{prop} \label{free} $\UTp$ is the  pro-$p$ nilpotent-2 group on free generators $a,b$. \end{prop}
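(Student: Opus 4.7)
Write $F = \UT(\ZZ)$ with distinguished generators $a,b$, and let $\hat F_p$ denote its pro-$p$ completion. By Fact~\ref{fact:compl}, $\hat F_p \cong \UTp$ as topological groups, so the plan is to verify that $\hat F_p$, together with the images of $a,b$, has the universal property of the free pro-$p$ nilpotent-2 group on two generators. This reduces Proposition~\ref{free} to combining Facts~\ref{fact: KM exercise} and~\ref{fact:compl} with standard completion arguments.

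First I would check that $\hat F_p$ itself is nilpotent of class $2$: since $F$ satisfies the law $[[x,y],z] = 1$ by Fact~\ref{fact: KM exercise}, and this law is a closed condition in any topological group, it lifts from the dense image of $F$ to all of $\hat F_p$. Next I would verify the universal property. Let $G$ be an arbitrary pro-$p$ group that is nilpotent-2, and let $u,v \in G$. By the abstract freeness of $F$ (Fact~\ref{fact: KM exercise}), there is a unique abstract homomorphism $\phi_0 \colon F \to G$ with $\phi_0(a) = u$ and $\phi_0(b) = v$. The key step is to show that $\phi_0$ is continuous when $F$ carries its pro-$p$ topology: for every open normal subgroup $N \trianglelefteq G$, the finite quotient $G/N$ is a $p$-group (because $G$ is pro-$p$), and $F/\phi_0^{-1}(N)$ embeds into $G/N$, so $\phi_0^{-1}(N)$ is a normal subgroup of $F$ of $p$-power index, hence open in the pro-$p$ topology. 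Uniform continuity of $\phi_0$ then yields a unique continuous extension $\hat\phi \colon \hat F_p \to G$ with $\hat\phi(a)=u$, $\hat\phi(b) = v$; uniqueness on $\hat F_p$ follows from the density of $F$ and the fact that $\{a,b\}$ topologically generates $\hat F_p$.

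The main -- and really only -- point requiring care is the continuity of $\phi_0$, where one has to notice that the defining properties of $G$ (pro-$p$ and nilpotent-2) both pass to its finite quotients $G/N$, forcing each $\phi_0^{-1}(N)$ to be of $p$-power index in $F$. Once this is in hand, the remainder of the proof is a routine extension-by-continuity argument, and invoking the identification $\hat F_p \cong \UTp$ of Fact~\ref{fact:compl} finishes the proof.
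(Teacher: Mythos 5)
Your proof is correct and follows essentially the same route as the first of the paper's two arguments: pass from the abstract universal property of $\UT(\ZZ)$ (Fact~\ref{fact: KM exercise}) to the topological one via the pro-$p$ completion (Fact~\ref{fact:compl}). The main difference is in how you verify that the abstract homomorphism $\phi_0$ extends continuously. The paper fixes a target $H$ that is topologically generated by $u,v$, invokes Serre's theorem (finite-index subgroups of f.g.\ pro-$p$ groups are open) to conclude that $H$ is its own pro-$p$ completion, and then goes through an intermediate completion $\widetilde U$ with respect to the restricted cofinal system $\Theta^{-1}(H^{p^s})$. Your version is a bit leaner: you allow $G$ arbitrary pro-$p$ nilpotent-2 (no generation hypothesis on $u,v$) and show continuity of $\phi_0$ directly from the observation that $G/N$ is a finite $p$-group for each open normal $N$, so $\phi_0^{-1}(N)$ has $p$-power index; no appeal to Serre or to cofinality of the verbal subgroups is needed. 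Your closedness argument that the law $[[x,y],z]=1$ passes from the dense image of $F$ to all of $\hat F_p$ is also fine (equivalently, each finite quotient $F/K$ is nilpotent-2, so the inverse limit is). The paper additionally sketches a second proof via the unique normal form $A = a^\alpha b^\beta q^\gamma$ with $\alpha,\beta,\gamma \in \Zp$, mimicking the computation in Fact~\ref{fact: KM exercise}; you did not pursue that route, which is fine since only one argument is needed.
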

 \begin{proof} There are two ways to see this.

 (1) Suppose that we are given  a nilpotent-2 pro-$p$ group $H$  topologically generated by $u,v$. Each subgroup of index  a power of $p$ is open (Serre). So $H$ is its own pro-$p$ completion. 
 
 Let $\Theta \colon \UT(\ZZ) \to H$ be the abstract group homomorphism given by $a \to u,  b \to v$ where $a,b$ are seen as standard generators of $\UT(\ZZ)$. Let $\widetilde U$ be the  completion of $\UT(\ZZ)$ with respect to the (restricted) system $\Theta^{-1}(H^{p^s})$, $s\in \NN$, where $H^{p^s}$ is the verbal subgroup  as above. Then there are natural continuous epimorphisms $\UTp \to \widetilde U$ by Fact~\ref{fact:compl} and $\widetilde U \to H$ since $H$ is its own pro-$p$ completion. Their composition maps $a $ to $u$ and $b$ tp $v$. 
 
 (2) Every matrix $A \in \UTp$   can be uniquely written as $A =  a^\alpha b^\beta q^\gamma$ where $\alpha, \beta, \gamma  \in \Zp$. Now argue as in Fact~\ref{fact: KM exercise}. (This second argument requires verification of some   facts on exponentation in pro-$p$ groups.) 
 \end{proof}

   \begin{thm} \label{thm:UTQFA} $\UT(\Zp)$ is finitely axiomatisable within the class of separable profinite groups.
 
 In fact there is a first-order formula $\phi(r,s)$ in the language of groups such that for each separable profinite group $G$, if $G \models \phi(c,d)$ for $c,d \in G$, then $a \mapsto c, b \mapsto d$ yields a topological isomorphism $\UT(\Zp) \cong G$.  \end{thm}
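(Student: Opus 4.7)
The approach is to build $\phi(r,s)$ as a conjunction of first-order sentences that together pin down the topological isomorphism type of $\UTp$ among separable profinite groups. First, include the class-$2$ nilpotency axiom $\forall x, y, z \, [[x,y], z] = 1$. By Fact~\ref{fact: KM exercise}, this ensures that whenever $G \models \phi(c,d)$, the assignment $a \mapsto c$, $b \mapsto d$ extends to an abstract group homomorphism $\psi_0 \colon \UT(\ZZ) \to G$. Next, add conditions forcing $G$ to be a pro-$p$ group topologically generated by $c, d$; by Fact~\ref{fact:compl} and Proposition~\ref{free}, $\psi_0$ then extends uniquely to a continuous homomorphism $\psi \colon \UTp \to G$ whose image is compact and contains $c, d$, hence equals $G$.

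For injectivity, recall that the derived subgroup of $\UTp$ is $\overline{\langle q\rangle} \cong \Zp$, central, with quotient $\Zp^2$. Add further conditions to $\phi$ ensuring that $G' \cong \Zp$ topologically and $G/G' \cong \Zp^2$ topologically. Then both the restriction $\psi|_{\UTp'}\colon \Zp \to \Zp$ and the induced map $\bar\psi \colon \UTp/\UTp' \to G/G'$, i.e.\ $\Zp^2 \to \Zp^2$, are continuous surjections between isomorphic profinite abelian groups. A continuous surjective endomorphism of $\Zp^n$ is necessarily invertible (its reduction mod $p$ is surjective, so invertible, so by Nakayama the original map is invertible). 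Hence both maps are isomorphisms. If $x \in \ker\psi$, then $\bar x = 0$, so $x \in \UTp'$, and then $\psi|_{\UTp'}(x) = 1$ forces $x = 1$. Thus $\psi$ is a continuous bijection between compact Hausdorff groups, hence a topological isomorphism.

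The main obstacle is expressing the various topological conditions --- pro-$p$-ness of $G$, topological generation by $c, d$, and the $\Zp$-ranks of $G'$ and $G/G'$ --- as first-order formulas in the pure language of groups. I would leverage the Nikolov--Segal theorem, which turns every finite-index subgroup of a topologically finitely generated profinite group into an open one, so that topological conditions can be reduced to algebraic conditions on finite quotients. The explicit class-$2$ structure then reduces the rank conditions to statements about commutators and $p$-th powers of $c, d$ and $q = [c,d]$; in particular, since $G'$ will be topologically cyclic, generated by the single element $q$, it suffices to control $q$ alone to force $G' \cong \Zp$ rather than a finite $p$-group. A similar analysis on $G/G'$ handles the abelianization.
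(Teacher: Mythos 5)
Your overall blueprint is sound and closely parallels the paper's: establish class-$2$ nilpotency, conclude from \cref{free} that there is a continuous epimorphism $\Theta\colon \UTp \to G$ sending $a\mapsto c$, $b\mapsto d$, and then show $\Theta$ is injective by examining what it does on the derived subgroup and on the abelianization. The observation that a continuous surjective endomorphism of $\Zp^n$ is automatically injective (reduce mod $p$, Nakayama, or just compactness) is exactly the right way to close the argument, and is what the paper's final two lines are implicitly relying on.

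However, there is a genuine gap at the point you yourself flag as ``the main obstacle,'' and the route you sketch for filling it does not work. The conditions ``$G$ is pro-$p$,'' ``$G$ is topologically generated by $c,d$,'' ``$G'\cong\Zp$,'' and ``$G/G'\cong\Zp^2$'' are not first-order expressible in the language of groups, and Nikolov--Segal does not help: that theorem tells you finite-index subgroups of a topologically f.g.\ profinite group are open, but you have no way to first-order assert topological finite generation in the first place, nor does openness of finite-index subgroups give you a way to quantify over finite quotients or to separate $\Zp$ from $\ZZ_q\times\Zp$ in the pure group language. Saying that $G'$ is topologically generated by the single element $q=[c,d]$ still leaves open every procyclic pro-$p$ group, and worse, every group of the form $\Zp\times\ZZ_q$ with $q\neq p$, since torsion-freeness and $|A/pA|=p$ do not pin down $\Zp$ among profinite abelian groups.

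The paper's key move, which your proposal omits entirely, is to \emph{interpret a ring} on the centre via the Mal'cev formula $\mu(x,y,z;r,s)$, so that $(C(G),+,M_{r,s})$ becomes a profinite ring with $[r,s]$ as its multiplicative identity. One can then invoke \cref{thm: QFA rings}(i) --- $\Zp$ is finitely axiomatisable among profinite rings as a compact valuation ring with residue field $\mathbb F_p$ and $p$ generating the maximal ideal --- to get a single first-order formula $\gamma(r,s)$ pinning down $C(G)\cong\Zp$. The remaining topological facts (pro-$p$-ness, $G_{\mathrm{ab}}\cong\Zp^2$, and topological generation by $c,d$) are then \emph{derived}, not axiomatised: the paper shows $G_{\mathrm{ab}}$ is torsion-free and uses the structure theorem for profinite abelian groups plus a commutator-divisibility argument to rule out $\ZZ_q$-factors for $q\neq p$; pro-$p$-ness follows because $G$ is an extension of the pro-$p$ group $C$ by the pro-$p$ group $G_{\mathrm{ab}}$; and topological generation follows from a determinant-in-$\Zp$ argument on the abelianization. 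Without the ring interpretation and the appeal to \cref{thm: QFA rings}, there is no way to get $C\cong\Zp$ into a first-order sentence, so the proposal as written cannot be completed.
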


 \begin{proof} 
 We follow the general outline of \cite[Thm.\ 5.1]{Nies:03}, where it is shown that $\UT(\ZZ)$ is QFA within the class of f.g.\ abstract groups. As explained there in more detail,  for any ring $R$   the Mal'cev formula $\mu(x,y,z; r,s)$ defines the ring operation  $M_{r,s}$ on the centre $C(\UT(R)) \cong (R,+)$ when $r,s$ are assigned to the standard generators $a,b$ (also see \cite{Nies.Thomas:08})s.  
 
Sentence  $\alpha_1$ expresses of a profinite group $G$ that $G$  is nilpotent-2, and the centre $C = C(G)$ equals the set of commutators (in particular, $G_\mathrm{ab} = G/C$). Since $C$ is closed, by Theorem~\ref{thm: QFA rings}, there is a   formula $\gamma(r,s)$ expressing that $(C, + , M_{r,s})$ is isomorphic to $\Zp$; in addition, $\gamma$ expresses that $[r,s]$ is the neutral element $1$ of this  ring. Finally, a sentence  $\alpha_3$ expresses that $pG/C$ has index $p^2$ in $G/C$.   Let $\phi(r,s) \equiv  \alpha_1 \land \gamma(r,s) \land \alpha_3$. 

Suppose now that $G \models \phi(c,d)$. 
\begin{claim}  $G_\mathrm{ab}\cong \Zp \times \Zp$. \end{claim}
 As in \cite[Thm.\ 5.1]{Nies:03} since the centre $C$ is torsion free, $G_\mathrm{ab}$ is torsion free: if $u \not \in C$ then $[u,v] \neq 1$ for some $v \in G - \{1\}$. Then $[u^n,v] = [u,v]^n \neq 1$ so that $u^n \not \in C$. 
 
 Next, since  $G_\mathrm{ab}$ is profinite, by the structure theorem (e.g. \cite[Thm. 4.3.8]{Ribes.Zalesski:00})  $G_\mathrm{ab} = \prod_q \mathbb Z_q  ^ {m(q)}$ where $q $ ranges over the primes and $m(q)$ is a cardinal.  Then $m(q) = 0$ for $q \neq p$. For otherwise we can  take $ v \in G - C$ such that in  $G_\mathrm{ab}$ we have $ p^n \mid vC$ for each $n$. Choose $z \in G$ such that $ [v,z]  \neq 1$, and take $w \in G$ such that $p^n wC = vC$. Then $k = [w^{p^n}, z] = [w,z]^{p^n}$ so that $p^n \mid k$ for each $n$,  contrary to the fact that $C \cong \Zp$ as an abelian group.  
Since $G \models \alpha_3$ we have $m(p) =2$. This shows the claim.

 Further, $G$ is a pro-$p$ group since the class of such groups is closed under extensions \cite[Thm.\ 2.2.1(e)]{Ribes.Zalesski:00}.

For a topological group $G$ and $S \sub G$ let  $\la S \ra$ denote the closure of the subgroup   generated by  $S$. 
\begin{claim} $G = \la c,d \ra$. \end{claim} 
\begin{claimproof} Since $G \models \alpha_3$ we have $\la [c,d]\ra  = C$. So it suffices to show that $\la Cc, Cd \ra = G_{\mathrm{ab}}$. Pick $g,h \in G$ such that $\la Cg, Ch \ra = G_{\mathrm{ab}}$.  There are $x,y,z,w \in \Zp$ and $u,v \in C$  such that $c = ug^x h^y$ and $d = v g^z h^w$. Then $[c,d] = [g,h]^{xy-zw}$.  On the other hand $[c,d]^k = [g,h]$ for some $k \in \Zp$. So the determinant $xy-zw$ is a unit in $\Zp$, whence $\la Cc, Cd \ra = G_{\mathrm{ab}}$. 
\end{claimproof}

By Prop.\ \ref{free} there is a continuous group homomorphism $\Theta \colon \UTp \to G$ such that $\Theta(a) =  c$ and $\Theta(b) = d$. Then $\Theta([a,b]) = [c,d]$ so $\Theta$ induces an isomorphism $C(\UTp) \to C(G)$. Also $\Theta$ induces an isomorphism 
 $\UTp_{\mathrm{ab}} \to G_{\mathrm{ab}}$. Hence $\Theta$ is an isomorphism as required.
 \end{proof}

  \part{Metric spaces and descriptive set theory}
  
  \newcommand{\rank}{\text{rank}}

\section{Turetsky and Nies: Scott rank of Polish metric spaces - a computability approach}
We give a proof based on computability theory  of Doucha's result  \cite{Doucha:14} that the Scott rank of Polish metric spaces $M$ is at most $\omega_1 +1$.  We prove that the Scott rank of each pair of tuples $\ol a, \ol b$ of the same length is bounded  by $\omega_1^{\ol a, \ol b, M}$. Here we view metric spaces as structures in a countable language with distance relations $R_q (x,y)$, where $q$ is a positive rational,  intended to express that the distance of $x,y$ is less than $q$.  William Chan announced this in 2016, giving a proof involving admissible sets. 

For a structure $M$, $\ol a, \ol b \in M^n$, and a linear order $L$,   the Ehrenfeucht-Fra\"iss\'e game $G^L_M(\ol a,\ol b)$+  is played as follows:
\begin{itemize}
\item On the $i$th round, Player 1 chooses a $z_i \in L$ with $z_i <_L z_{i-1}$ when $i > 0$, and either chooses an element $a_{n+i} \in M$ or an element $b_{n+i} \in M$.
\item Player 2 then chooses whichever of of $a_{n+i}$ or $b_{n+i}$ Player 1 did not.
\end{itemize}
After round $i$, if the map from $a_0a_1\dots a_{n+i}$ to $b_0b_1\dots b_{n+i}$ is not a partial isomorphism, then Player 1 wins.  The game ends in a win for Player 2 after either $\omega$ many rounds, or if Player 1 cannot choose a $z_{i+1} <_L z_i$, and Player 1 has not already won.

We extend these games to metric spaces, replacing partial isomorphism with partial isometry.

\begin{fact}
If $M$ is a countable structure or a Polish  metric space and $L$ is ill-founded, then Player 2 has a winning strategy in $G^{L}_M(\ol a,\ol b)$ iff there is an automorphism or autoisometry of $M$ taking $\ol a$ to $\ol b$.
\end{fact}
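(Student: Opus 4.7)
The backward direction gives Player 2 an obvious \emph{mirror} strategy using the automorphism: given $\pi$ with $\pi(\ol a) = \ol b$, if Player 1 selects $a_{n+i} \in M$ in round $i$, Player 2 responds $b_{n+i} := \pi(a_{n+i})$; if Player 1 selects $b_{n+i}$, Player 2 responds $a_{n+i} := \pi^{-1}(b_{n+i})$. Since $\pi$ is an isomorphism (resp.\ isometry) extending $\ol a \mapsto \ol b$, the map $a_j \mapsto b_j$ is a partial isomorphism (resp.\ isometry) after every round, so Player 2 wins irrespective of Player 1's $L$-moves, and regardless of whether $L$ is well- or ill-founded.

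For the forward direction I plan a back-and-forth construction. Assume Player 2 has a winning strategy $\sigma$. Using ill-foundedness of $L$, I first fix a strictly descending sequence $z_0 >_L z_1 >_L \cdots$ in $L$, which Player 1 will use as his $L$-moves, guaranteeing that any play against $\sigma$ lasts a full $\omega$ rounds and hence is a win for Player 2. In the countable structure case, I enumerate $M = \{m_k : k \in \omega\}$ and have Player 1 play, in round $2k$, the $a$-side move $a_{n+2k} := m_k$, and in round $2k+1$, the $b$-side move $b_{n+2k+1} := m_k$; Player 2 answers according to $\sigma$. Because $\sigma$ wins, the resulting map $\phi \colon a_j \mapsto b_j$ is a partial isomorphism; its domain contains $\{m_k : k \in \omega\} = M$ (from the even rounds) and its range contains $M$ (from the odd rounds), so $\phi$ is an automorphism taking $\ol a$ to $\ol b$.

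In the Polish metric space case I would fix a countable dense subset $D = \{d_k : k \in \omega\}$ of $M$ and run the analogous construction with $d_k$ in place of $m_k$. The resulting $\phi$ is a partial isometry whose domain and range both contain $D$. Since isometries are uniformly continuous and $M$ is complete, the restriction $\phi \uh D$ extends uniquely to an isometry $\widehat\phi \colon M \to M$; its image $\widehat\phi(M)$ is closed (as the isometric image of a complete space) and contains the dense set $D$, hence equals $M$, so $\widehat\phi$ is an autoisometry with $\widehat\phi(\ol a) = \ol b$. The one subtle point is precisely this metric case: the partial isometry extracted from the play must simultaneously have a dense domain (so the continuous extension to all of $M$ exists and is an isometry) and a dense range (so the extension is surjective). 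Player 1's alternation between the $a$- and $b$-sides secures both at once, and completeness then delivers the total autoisometry.
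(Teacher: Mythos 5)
Your argument is correct, and since the paper states this as a Fact without proof, your write-up supplies the standard back-and-forth construction that the paper implicitly invokes. Both directions are handled properly, and you correctly isolate the only delicate point in the metric case: by having Player~1 alternate sides while enumerating a countable dense set, the extracted partial isometry has dense domain (so the $1$-Lipschitz extension to all of $M$ exists, is an isometry, and fixes $\ol a \mapsto \ol b$) and dense range (so that extension, having closed image, is surjective).
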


\begin{definition}
For a structure or metric space $M$, define $\rank^M(\ol a,\ol b)$ to be the least ordinal $\alpha$ for which Player 2 does not have a winning strategy in $G_M^\alpha(\ol a,\ol b)$, or $\rank^M(\ol a,\ol b) = \infty$ if there is no such $\alpha$.

Define $\rank^M(\ol a) = \sup\{ \rank^M(\ol a,\ol b) : \rank^M(\ol a,\ol b) < \infty\}$.

Define $\rank(M) = \sup\{ \rank^M(\ol a) + 1 : \ol a \in M\}$.
\end{definition}

Note that in some versions e.g. Doucha's, the ``$+1$" is omitted.

\begin{fact}
For a computable structure or Polish space $M$ and any reasonable definition of Scott Rank, $SR(M) \le \rank(M)$.
\end{fact}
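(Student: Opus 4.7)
The plan is to establish the standard correspondence between the Ehrenfeucht-Fra\"iss\'e games of length $\alpha$ and back-and-forth equivalence at level $\alpha$, which in turn matches indistinguishability by infinitary formulas of quantifier rank $\alpha$. For a countable structure $M$ in a countable relational language, one proves by simultaneous induction on ordinals $\alpha$ that Player 2 has a winning strategy in $G_M^\alpha(\ol a,\ol b)$ iff $\ol a \equiv_\alpha \ol b$ in the sense that they satisfy the same $L_{\infty\omega}$-formulas of quantifier rank at most $\alpha$. This is the classical Karp/Ehrenfeucht theorem and needs only the observation that our games are played along well-founded linear orders, so the usual transfinite induction on $\alpha$ carries through.

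Granted this, one reads off the definition. If $\rank^M(\ol a,\ol b) = \alpha < \infty$, then Player 2 loses $G_M^\alpha(\ol a,\ol b)$, so by the correspondence some formula $\psi_{\ol a,\ol b}$ of quantifier rank $\le \alpha$ holds of $\ol a$ but not $\ol b$. Let $\beta = \rank^M(\ol a)$; then the infinitary conjunction
\[
\Phi_{\ol a}(\ol x) \;\equiv\; \bigwedge \{\psi_{\ol a,\ol b}(\ol x) : \rank^M(\ol a,\ol b) < \infty\}
\]
is an $L_{\infty\omega}$-formula of quantifier rank $\le \beta$ which holds of a tuple $\ol c$ iff $\ol c$ is not distinguished from $\ol a$ by any EF game of finite rank, iff Player 2 has a winning strategy in $G_M^L(\ol a,\ol c)$ for every well-ordered $L$. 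Standard back-and-forth then shows this defines the automorphism orbit of $\ol a$. Thus the Scott sentence for $\ol a$ has rank $\le \rank^M(\ol a)$, and a Scott sentence for $M$ is obtained in the usual way by conjoining over all tuples, at cost of one more quantifier, giving $SR(M) \le \sup_{\ol a}(\rank^M(\ol a)+1) = \rank(M)$.

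For a Polish metric space, the same plan works after replacing the relational language by the countable distance predicates $R_q$ of the excerpt and replacing ``partial isomorphism'' by ``partial isometry'' throughout. The back-and-forth characterization of the game goes through identically since $R_q(x,y)$-statements are preserved by partial isometries, and the Scott sentence is formed in $L_{\omega_1\omega}$ of this language. One small technical point: for Polish $M$ the orbit equivalence is recovered from games along all countable well-orders, using that the distance relations are dense and countable, so the ``$+1$'' in the definition of $\rank(M)$ accounts precisely for the extra quantifier collecting all $\Phi_{\ol a}$ into a single Scott sentence.

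The main obstacle is less in the logic than in pinning down what ``reasonable definition of Scott rank'' means across the two settings; once one fixes $SR(M)$ as the least $\alpha$ such that a Scott sentence of quantifier rank $\le \alpha$ exists (in $L_{\omega_1\omega}$ for the countable case, or the continuous-logic variant adapted to the $R_q$ for the Polish case), the inequality drops out of the games-to-formulas correspondence essentially by bookkeeping of quantifier ranks. The harder and genuinely new content of this section is the \emph{bound} $\omega_1^{\ol a,\ol b,M}$ for $\rank^M(\ol a,\ol b)$, which is where computability-theoretic input (e.g.\ a Kleene--Brouwer / $\Sigma^1_1$-bounding style argument on winning strategies) will be needed.
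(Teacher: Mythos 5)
The paper states this Fact without a proof; it is treated as background. Your sketch follows the standard Karp/Scott route --- convert EF-game rank to quantifier rank of separating $L_{\infty\omega}$-formulas, conjoin these into an orbit-defining formula $\Phi_{\ol a}$, and assemble a Scott sentence --- and this is the right idea and essentially sound. Two points would need more care in a full write-up. First, in the Polish case the conjunction $\bigwedge\{\psi_{\ol a,\ol b}:\rank^M(\ol a,\ol b)<\infty\}$ ranges over continuum-many $\ol b$, so as written $\Phi_{\ol a}$ is an $L_{\infty\omega}$-formula but not obviously an $L_{\omega_1\omega}$-formula. The fix is to run the Scott analysis through the countable dense set $D$, which the surrounding section sets up precisely for this purpose: the Fact that Player 2 wins $G^{L}_M(\ol a,\ol b,D)$ for ill-founded $L$ iff $\ol a,\ol b$ are autoisometric shows the $D$-restricted games separate the same orbits, so only a countable conjunction of separating formulas is needed and the Scott sentence lands in $L_{\omega_1\omega}$. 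Second, the ``$+1$'' bookkeeping in $SR(M)\le\sup_{\ol a}(\rank^M(\ol a)+1)$ is no more precise than the phrase ``any reasonable definition of Scott rank'' allows; different standard definitions (least $\alpha$ at which the back-and-forth hierarchy stabilises, quantifier rank of a Scott sentence, etc.) differ by a bounded ordinal amount, and your estimate is correct up to that slack, which is evidently the intended reading. No genuine gap, but those are the two places where the sketch is eliding real work.
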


\begin{definition}
If $M$ is a Polish metric space and $D \subseteq M$ is dense, define $G^L_M(\ol a,\ol b,D)$ exactly as $G^L_M(\ol a,\ol b)$, save that Player 1's choice of elements is restricted to $D$.  Player 2 is still allowed to choose any element from $M$.

Define $\rank^M_D$ and $\rank(M,D)$ as above, using $G^\alpha_M(\ol a,\ol b,D)$ in place of $G^\alpha_M(\ol a,\ol b)$.
\end{definition}

\begin{remark}
If $L$ is countable, a strategy for Player 2 is coded by a real, given some numbering of $L$ and $D$.  Player 1's possible plays at any give round are each coded by an element of $\omega$, while Player 2's responses are given by Cauchy sequences from $D$.
\end{remark}

\begin{remark}
Given numberings of $D$ and $L$ and a real, checking that this real codes a winning strategy for Player 2 is arithmetical relative to the metric on $D$.  We must check that every response is a Cauchy sequence, and that every partial play of the game results in a partial isometry.
\end{remark}

\begin{remark}
Any winning strategy of Player 2's for $G^L_M(\ol a,\ol b)$ restricts to a winning strategy for $G^L_M(\ol a,\ol b,D)$, and so $\rank^M(\ol a,\ol b) \le \rank^M_D(\ol a,\ol b)$.
\end{remark}

\begin{fact}
If $M$ is a Polish  metric space, $D \subseteq M$ is dense and $L$ is ill-founded, then Player 2 has a winning strategy in $G^{L}_M(\ol a,\ol b,D)$ iff there is an autoisometry of $M$ taking $\ol a$ to $\ol b$.
\end{fact}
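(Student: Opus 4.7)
The backward direction is immediate from the preceding Fact (for the unrestricted game $G^L_M(\ol a,\ol b)$) combined with the Remark just before it: an autoisometry taking $\ol a$ to $\ol b$ yields a winning Player~2 strategy in the unrestricted game, and any such strategy restricts to a winning strategy in the $D$-restricted game. So we focus on the forward direction, which we will prove directly by a back-and-forth construction.

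Fix a winning strategy $\tau$ for Player~2 in $G^L_M(\ol a,\ol b,D)$. Since $L$ is ill-founded, pick an infinite descending chain $z_0 >_L z_1 >_L z_2 >_L \cdots$, so that Player~1 can legally play for $\omega$ many rounds. Enumerate $D = \{d_i : i \in \omega\}$. We play the role of Player~1 against $\tau$ as follows: at round $2i$ we play $z_{2i}$ and declare $a_{n+2i} := d_i$ as a left move, and at round $2i+1$ we play $z_{2i+1}$ and declare $b_{n+2i+1} := d_i$ as a right move. In each round $\tau$ supplies an element of $M$ on the opposite side (call it $b_{n+2i}$ and $a_{n+2i+1}$ respectively). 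Because $\tau$ is winning, the correspondence $a_0 \cdots a_{n+k} \mapsto b_0 \cdots b_{n+k}$ is a partial isometry at every finite stage $k$.

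Let $A = \{a_i : i \in \omega\}$ and $B = \{b_i : i \in \omega\}$, and let $h : A \to B$ be the map $a_i \mapsto b_i$; this descends to a well-defined isometric bijection because the partial-isometry condition equates $d(a_i,a_j)=0$ with $d(b_i,b_j)=0$. By construction every $d_i$ occurs among the left moves (round $2i$) and among the right moves (round $2i+1$), so $D \subseteq A$ and $D \subseteq B$; hence $A$ and $B$ are both dense in $M$. By completeness of $M$, the isometric embedding $h$ extends uniquely to an isometric embedding $\tilde h : M \to M$. Its image $\tilde h(M)$ is complete, hence closed in $M$, and contains the dense set $B$, so $\tilde h(M) = M$. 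Thus $\tilde h$ is an autoisometry with $\tilde h(\ol a) = \ol b$.

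The plan uses only the standard back-and-forth; there is no real obstacle. The only substantive point is remembering to enumerate $D$ on \emph{both} sides of the play, so that $A$ and $B$ are simultaneously dense in $M$, which is exactly what is needed to extend $h$ to a surjective isometry by completeness.
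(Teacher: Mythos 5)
Your proof is correct and is precisely the standard back-and-forth argument that the paper leaves implicit when stating this as a Fact without proof. The key observation you flag — enumerating $D$ on both the left and the right so that both the $a$-side and the $b$-side sets are dense, which is what makes the extension of $h$ surjective by completeness — is exactly the point that distinguishes the Polish case from the countable-structure case.
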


\begin{thm}
If $M$ is a Polish  metric space, $D \subseteq M$ is dense and $\rank^M_D(\ol a,\ol b) \ge \omega_1^{\ol a,\ol b,M\uhr{D}}$, then there is an autoisometry of $M$ taking $\ol a$ to $\ol b$, and so $\rank^M(\ol a,\ol b) = \infty$.

Thus  $\rank^M(\ol a,\ol b) <  \omega_1^{\ol a,\ol b,M\uhr{D}}$  unless $\ol a, \ol b$ are autoisometric, and therefore $\rank^M(\ol a) \le \omega_1$ and $\rank(M) \le \omega_1+1$.
\end{thm}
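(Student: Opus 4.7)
The approach is a classical $\Sigma^1_1$-boundedness argument, relative to the oracle $X = \ol a \oplus \ol b \oplus M\uhr{D}$; write $\omega_1^X$ for the least ordinal with no $X$-recursive well-ordering of $\omega$ of that order type. The plan is to exploit the hypothesis to locate an \emph{ill-founded} $X$-recursive linear order on which Player~2 has a winning strategy, at which point the Fact on ill-founded $L$ delivers an autoisometry of $M$ sending $\ol a$ to $\ol b$, and hence the equality $\rank^M(\ol a,\ol b) = \infty$.

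The first step is to note that a winning strategy for Player~2 on $G^\alpha_M(\ol a,\ol b,D)$ pulls back along any order isomorphism to a winning strategy on $G^L_M(\ol a,\ol b,D)$ whenever $L$ has order type $\alpha$. Since every $X$-recursive well-ordering of $\omega$ has order type strictly below $\omega_1^X$, the hypothesis $\rank^M_D(\ol a,\ol b) \ge \omega_1^X$ gives, for each $X$-recursive well-order $L$ of $\omega$, a winning strategy for Player~2 on $G^L_M(\ol a,\ol b,D)$. Next I would form the set
\[
S \;=\; \{e \in \omega : \phi_e^X \text{ codes a linear order on } \omega \text{ and Player 2 wins } G^{\phi_e^X}_M(\ol a,\ol b,D)\}.
\]
The key claim is that $S$ is $\Sigma^1_1(X)$: a strategy is coded by a real which, for each finite history, prescribes a Cauchy sequence from $D$ as Player~2's response, and by the remarks in the text the predicate ``this real is a winning strategy'' is arithmetic in $X$, $e$, and the strategy. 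By the previous paragraph $S$ contains every index of an $X$-recursive well-ordering. But the set of such indices, within the arithmetic set of indices of $X$-recursive linear orders, is $\Pi^1_1(X)$-complete and therefore cannot coincide with any $\Sigma^1_1(X)$ superset; hence $S$ must also contain some $e$ with $\phi_e^X$ an $X$-recursive ill-founded linear order, and the Fact supplies the autoisometry.

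For the remaining conclusions I would combine the contrapositive of what was just proved with the earlier remark $\rank^M(\ol a,\ol b) \le \rank^M_D(\ol a,\ol b)$: if $\ol a,\ol b$ are not autoisometric then $\rank^M_D(\ol a,\ol b) < \omega_1^X$, hence $\rank^M(\ol a,\ol b) < \omega_1^X \le \omega_1$. Taking suprema of these countable ordinals over $\ol b$ yields $\rank^M(\ol a) \le \omega_1$, and consequently $\rank(M) \le \omega_1 + 1$. The main obstacle I expect is justifying carefully that ``Player~2 wins $G^L_M(\ol a,\ol b,D)$'' is $\Sigma^1_1(X)$ uniformly in $L$: unlike the discrete-structure case, Player~2's responses live in $M$ rather than in $D$, so a strategy must record genuine Cauchy sequences from $D$, and verifying partial isometry requires evaluating distances between their limits. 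Both conditions remain arithmetic in $X$ by the remarks in the text, but the bookkeeping is the delicate point that needs to be spelled out once the skeleton above is in place.
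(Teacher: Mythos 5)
Your proposal is correct and follows essentially the same route as the paper: both define the $\Sigma^1_1(\ol a,\ol b,M\uhr D)$ predicate ``$\Phi_e^X$ is a linear order and Player~2 has a (real-coded) winning strategy on $G^{\Phi_e^X}_M(\ol a,\ol b,D)$,'' observe via pullback along order isomorphisms that it holds for every index of an $X$-recursive well-order, and then invoke $\Sigma^1_1$-boundedness (your phrasing via $\Pi^1_1(X)$-completeness of $\mathrm{WO}$ is the same argument) to find an ill-founded index, at which point the Fact on ill-founded $L$ produces the autoisometry. Your closing ``delicate point'' about strategies recording Cauchy sequences and the winning condition being arithmetic is exactly what the paper's preparatory remarks are there to dispose of, so nothing is missing.
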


\begin{proof}
Suppose $\rank^M_D(\ol a,\ol b) \ge \omega_1^{\ol a,\ol b,M\uhr{D}}$.  Consider the formula $\theta(e)$ stating that $\Phi_e^{\ol a,\ol b,M\uhr{D}}$ gives a total linear order $L$, and there is a real which codes a winning strategy for Player 2 in $G^L_M(\ol a,\ol b,D)$.  Note that $\theta$ is $\Sigma^1_1(\ol a,\ol b,M\uhr{D})$.  By assumption, $\theta(e)$ holds for every $e$ with $\Phi_e^{\ol a,\ol b,M\uhr{D}}$ well-ordered, and so by $\Sigma^1_1$-bounding it must hold for some $e$ with $\Phi_e^{\ol a,\ol b,M\uhr{D}}$ ill-founded.  As observed before, this means that there is an autoisometry of $M$ taking $\ol a$ to $\ol b$.
\end{proof}
We strengthen the  result of William Chan  that a  rigid Polish metric space has countable Scott rank.
\begin{prop} Let $M$ be a Polish metric space such that the isometry relation on tuples of the same length is $\Delta^1_1$. Then the Scott rank of $M$ is computable in $M\uhr  D$. \end{prop}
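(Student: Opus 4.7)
The plan is to strengthen the $\Sigma^1_1$-bounding argument of the preceding theorem by eliminating the tuple parameters $\ol a, \ol b$ using the $\Delta^1_1$ isometry hypothesis, so that the resulting bound depends only on $M\uhr D$. The core observation is that in the earlier proof the bound $\omega_1^{\ol a, \ol b, M\uhr D}$ carries the tuples as oracles; we want to absorb those oracles into $M\uhr D$ alone, which is exactly what a $\Delta^1_1$ isometry relation allows.

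First I would define a single formula $\eta(e)$, with no tuple parameters, asserting: $\Phi_e^{M\uhr D}$ is a total linear order $L$, and there exist $n\in\omega$ and tuples $\ol a,\ol b\in M^n$ (each element coded as a Cauchy sequence from $D$, i.e.\ a real) such that $\ol a,\ol b$ are not autoisometric and some real codes a winning strategy for Player 2 in $G^L_M(\ol a,\ol b,D)$. The clause ``$\ol a,\ol b$ are not autoisometric'' is $\Delta^1_1(M\uhr D)$ by hypothesis; the clauses on $\Phi_e^{M\uhr D}$ being a linear order are arithmetical in $M\uhr D$; and the existence of a winning strategy is $\Sigma^1_1(M\uhr D)$ as noted in the remarks above. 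Existentially quantifying over reals coding $\ol a,\ol b$ preserves $\Sigma^1_1(M\uhr D)$, so $\eta$ is $\Sigma^1_1(M\uhr D)$.

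Second, I would observe that $\eta(e)$ forces $\Phi_e^{M\uhr D}$ to be well-founded: if it were ill-founded, the recalled fact would produce an autoisometry of $M$ taking $\ol a$ to $\ol b$, contradicting the non-autoisometry clause. Hence $\eta$ is a $\Sigma^1_1(M\uhr D)$ subset of the indices of $M\uhr D$-computable well-orderings. By $\Sigma^1_1$-bounding relative to $M\uhr D$, there is an ordinal $\beta<\omega_1^{M\uhr D}$ such that $|\Phi_e^{M\uhr D}|<\beta$ for every $e\in\eta$. Since every ordinal below $\omega_1^{M\uhr D}$ is the order type of some $\Phi_e^{M\uhr D}$, for any non-autoisometric pair $\ol a,\ol b$ with $\rank^M(\ol a,\ol b)\ge\beta$ one could pick $e$ coding a well-order of type $\rank^M(\ol a,\ol b)$ and witness $\eta(e)$ via Player 2's winning strategy on that initial segment---contradicting the bound. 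Thus $\rank^M(\ol a,\ol b)\le\beta$ for all non-autoisometric pairs, and therefore $\rank(M)\le\beta+1<\omega_1^{M\uhr D}$, which is what ``computable in $M\uhr D$'' asserts.

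The main obstacle is the bookkeeping to confirm $\eta$ really is $\Sigma^1_1(M\uhr D)$ with no hidden parameter: this is precisely where the $\Delta^1_1$ hypothesis on the isometry relation is essential, since otherwise non-autoisometry is only $\Pi^1_1$ and $\eta$ escapes the $\Sigma^1_1$ class. A minor technical point is choosing a uniform representation of elements of $M$ as Cauchy sequences in $D$ (using a fixed rate of convergence) so that quantifying over such representatives remains first-order in the real variables and does not spoil the complexity count.
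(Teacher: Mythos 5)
Your proposal is correct and is essentially the same argument as the paper's: both quantify the tuples $\ol a,\ol b$ existentially inside the $\Sigma^1_1(M\uhr D)$ formula, using the $\Delta^1_1$ hypothesis so that the ``not autoisometric'' clause stays on the right side of the complexity ledger, and then apply $\Sigma^1_1$-bounding relative to $M\uhr D$. One small wording nit: to derive the contradiction you should pick $e$ coding a well-order of type $\beta$ itself (which is $M\uhr D$-computable) rather than of type $\rank^M(\ol a,\ol b)$ (which need not be), and argue via $\rank^M_D$; but this does not change the substance.
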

\begin{proof}  By hypothesis the following property of $e \in \omega$  is $\Sigma^1_1$:    $\Phi_e ^{M\uhr D}$ codes a linear order $L$ such that \[\exists n \exists \ol a, \ol b \in M^n  \, [ \ol a \not \approx \ol b  \ \land \ \text{Player 2 has a winning strategy in } G^L_M(\ol a, \ol b, D).] \]

By the argument above for each such $e$,   $\Phi_e ^{M\uhr D}$  is a well-ordering. By $\Sigma^1_1$ bounding, the set of such $\Phi_e ^{M\uhr D}$  is then bounded by an ordinal computable in $M \uhr D$.
\end{proof}

   
 \part{Model theory and definability}

   \section{Some open questions on computability and structure} 
Noam Greenberg, Alexander Melnikov, Andr\'e Nies and and Dan Turetsky worked  at the Research Centre Coromandel April 18-22. They discussed the following open questions.

\begin{question} Let $A$ be a computable $\omega$-categorical structure in a finite signature. Show that  $S_A$, the set of indices for computable structures isomorphic to $A$,   is arithmetical.
\end{question}

  Turetsky showed that $S_A$ can be arbitrarily high in the arithmetical hierarchy depending on the arity of the language. According to Melnikov,  an affirmative  answer follows from Uri Andrew's thesis around p.\ 42, related to the Hrushovski construction.

\begin{question} Is every  computable closed subgroup $G $ of $S_\infty$  topologically isomorphic to the automorphism group of a computable structure? \end{question} 
Such a group is given as a $\PI 1$ class of pairs $f,g$ of functions in Baire space such that $g =f^{-1}$. They can be seen as pruned subtrees of     the  tree $\mathbb T $  of all pairs $\la \sss, \sss' \ra$ of strings of the same length $n$ such that
$\sss(i) = k \lra \sss'(k) = i$ for each $i,k < n$.  It is well known that $G$ is topologically isomorphic to the automorphism group of a some countable  structure, namely the one consisting of all the $n$-orbits, seen as named $n$-ary relations, for each $n$. However, the orbit relation is only computable in $\mathcal O$ in general, so this structure is merely computable in $\+ O$. 

\begin{question} Does computably categorical imply relatively $\Delta^1_1$-categorical? \end{question}

\begin{question} Let $G$ be a f.g.\ group with $\PI 1$ word problem. Is $G$ embeddable into the group of computable isometries of a computable metric space?  
\end{question}
Morozov \cite{Morozov:00} showed that one cannot always choose the space discrete; I.e., there is an example of $G$ that is not a subgroup of the computable permutations of $\NN$.  Yet, one can show that this particular example can be realised as a group of computable isometries.

Further suggestions for study (Melnikov): \bi \item  the partial order of primitive recursive presentations of a structure (such as $(\QQ, <)$ under the preordering of isomorphisms that are p.r.\ (without the inverse necessarily being p.r.) For instance, do you get the same degree structure for $(\QQ, <)$ and the countably atomless Boolean algebra?  

\item  Does Markov computable for compact groups imply fully computable? 
\ei

\def\cprime{$'$} \def\cprime{$'$}



\end{document}